\newtheorem{thm}{Theorem}[section]
\newtheorem{defn}[thm]{Definition}
\newtheorem{lem}[thm]{Lemma}
\newtheorem{prop}[thm]{Proposition}
\newtheorem{cor}[thm]{Corollary}
\numberwithin{equation}{thm}
\theoremstyle{definition}
\newtheorem{rmk}[thm]{Remark}
\newtheorem{exam}[thm]{Example}
\newtheorem{algo}[thm]{Algorithm}
\newcommand{\opname}[1]{\operatorname{\mathsf{#1}}}
\renewcommand{\ker}{\opname{Ker}}  \newcommand{\im}{\opname{Im}}       
    \newcommand{\Ext}{\opname{Ext}}
\renewcommand{\mod}{\opname{mod}}
\newcommand{\Hom}{\opname{Hom}}
\newcommand{\End}{\opname{End}}
\newcommand{\rad}{\opname{rad}}  \newcommand{\soc}{\opname{soc}}
\newcommand{\Rep}{\opname{Rep}}
\newcommand{\row}{\opname{row}}     \newcommand{\col}{\opname{col}}
\newcommand{\dg}{\opname{dg}}
\newcommand{\az}{{\alpha}}
  \newcommand{\ttz}{{\Theta}}
\newcommand{\dz}{{\delta}}  \newcommand{\ddz}{{\Delta}}
\newcommand{\gz}{{\gamma}}  
\newcommand{\sz}{{\sigma}}  \newcommand{\ssz}{{\Sigma}}
\newcommand{\lz}{{\lambda}}
\newcommand{\vz}{{\varphi}}
\newcommand{\cm}{{\mathcal M}}
\newcommand{\cz}{{\mathcal Z}}
 \newcommand{\fkC}{{\frak C}}
                             \newcommand{\fkD}{{\frak D}}
\newcommand{\fkg}{{\frak g}}
\newcommand{\fkh}{{\frak h}} \newcommand{\fkH}{{\frak H}}
\newcommand{\fkl}{{\frak l}}
\newcommand{\fks}{{\frak s}}
\newcommand{\bbd}{{\mathbb D}}
\newcommand{\bbf}{{\mathbb F}}
\newcommand{\bbn}{{\mathbb N}}
\newcommand{\bbq}{{\mathbb Q}}
\newcommand{\bbz}{{\mathbb Z}}
       \newcommand{\ol}[1]{\overline{#1}}
\newcommand{\ra}{\rightarrow}             
\newcommand{\lan}{{\langle}}              \newcommand{\ran}{{\rangle}}
\newcommand{\geqs}{{\geqslant}}           \newcommand{\leqs}{\leqslant}
\newcommand{\lra}{{\longrightarrow}}
\newcommand{\ttop}{\opname{top}}
\newcommand{\iso}{\stackrel{_\sim}{\rightarrow}}
\newcommand{\otm}{\otimes}
\newcommand{\bps}{\bigoplus}
\newcommand{\wit}{\widetilde}  \newcommand{\wih}{\widehat}
\newcommand{\llra}{~{\Longleftrightarrow}~}
\newcommand{\ie}{{\em i.e.}~}
\newcommand{\vartri}{\vartriangle}
\newcommand{\U}{{\bf U}}
\newcommand{\bdim}{{\bf dim}}
\newenvironment{psmallmatrix}{\left(\begin{smallmatrix}}{\end{smallmatrix}\right)}
\def\bsq{{\boldsymbol{q}}}
\def\th{{\text{\rm th}}}
\def\scc{{\textsc{c}}}
\def\fkh{{\mathfrak h}}
\begin{document}

\title[Canonical basis for quantum affine $\fkg\fkl_n$]%
{Multiplicaton formulas and canonical basis for quantum affine $\fkg\fkl_n$}

\author{Jie Du and Zhonghua Zhao$^\dag$}
\address{School of Mathematics and Statistics, University of New South Wales, Sydney 2052, Australia.}
\email{j.du@unsw.edu.au}
\address{Department of Mathematics and Computer Science, School of
Science, Beijing University of Chemical Technology, Beijing
100029, China.}
\email{zhaozh@mail.buct.edu.cn}

\keywords{Ringel-Hall algebras, quantum groups, cyclic quivers, monomial basis, canonical basis}

\date{\today}

\subjclass[2010]{16G20,20G42}

\thanks{$^\dag$Corresponding author.\\
The research was carried out while the second author was visiting the University of New South Wales. The hospitality and support from UNSW are gratefully acknowledged. He also thanks the China Scholarship Council for the financial support.}

\maketitle

\begin{abstract}
We will give a representation-theoretic proof for the multiplication formula in the Ringel-Hall algebra $\fkH_\Delta(n)$ of a cyclic quiver $\Delta(n)$ given in \cite[Thm~4.5]{DuFu2015quantum}.  As a first application, we see immediately the existence of Hall polynomials for cyclic quivers,  a fact established in \cite{Guo1995hallpoly} and \cite{Ringel1993composition}, and derive a recursive formula to compute them. 
We will further use the formula and the construction of certain monomial base for $\fkH_\Delta(n)$ given in \cite{DengDuXiao2007generic}, together with the double Ringel--Hall algebra realisation of  the quantum loop algebra $\U_v(\wih{\fkg\fkl}_n)$ in \cite{DengDuFu2012double},  to develop some algorithms and to compute the canonical basis for $\U_v(\wih{\fkg\fkl}_n)^+$. As examples,
we will show explicitly the part of the canonical basis 
associated with modules of Lowey length at most $2$ for the quantum group $\U_v(\wih{\fkg\fkl}_2)$.
\end{abstract}

\setcounter{tocdepth}{1}

\section{Introduction}

The investigation on quantum algebras associated with affine Hecke algebras has made significant progress recently.
% see \cite{DengDuFu2012double, DuFu2015quantum, FL, FLLLW}.
In the affine type $A$ case, an algebraic approach is developed in \cite{DengDuFu2012double} for the Schur--Weyl theory associated with the quantum loop algebra of $\mathfrak{gl}_n$, affine $q$-Schur algebras and Hecke algebras of the  affine symmetric groups. This approach, motivated from the algebraic approach for quantum $\mathfrak{gl}_n$, is different from the geometric approach developed in \cite{GV, Lu99}. Further in \cite{DuFu2015quantum, DF16}, new realisations for these quantum loop algebras and their integral Lusztig type form are obtained using affine $q$-Schur algebras. This generalises the work \cite{BeilinsonLusztigMacPherson1990geometric} of
Beilinson--Lusztig--MacPherson to this affine case.
For affine types of other than A, Fan et al used affine $q$-Schur algebras of type $C$ to construct in \cite{FLLLW} various types of quantum symmetric pairs. The multiplication formulas there are much more complicated, but can be used to study the modified versions of these quantum algebras and their canonical basis.  In this paper, we will see how a new multiplication formula discovered in \cite{DuFu2015quantum} is used to compute certain slices of the canonical basis for the $+$-part of the quantum loop algebra of $\mathfrak{gl}_n$.

The key ingredient of the approach developed in \cite{DengDuFu2012double} is the double Ringel-Hall algebra characterisation for the Drinfeld's quantum loop algebra of $\mathfrak{gl}_n$ \cite{Drinfeld1988new}. In this way, the Ringel--Hall algebra of a cyclic quiver and its opposite algebra become the $\pm$-part of the quantum loop algebra of $\mathfrak{gl}_n$, and their generators associated with the semisimple modules of the cyclic quiver play the role as done by usual Chevalley generators. In particular, the quantum affine Schur--Weyl duality can be described by explicit actions of these (infinitely many) generators associated with semisimple representations and a new realisation, i.e., a new construction of the quantum loop algebra of $\mathfrak{gl}_n$, is achieved through a beautiful multiplication formula of a basis element by a semisimple generator. It should be pointed out that these multiplication formulas are derived in the affine $q$-Schur algebras with most of the computation done within the affine Hecke algebras. However, when the formulas restrict to the $\pm$-part, they result in multiplication formulas for (generic) Ringel--Hall algebras of a cyclic quiver. Thus, a natural questions arises: Is there a direct proof for these formulas as a quantumization of Hall numbers associated with representations of a cyclic quiver over finite fields? 

In this paper, we first provide a representation-theoretic proof for the multiplication formula in the Ringel--Hall algebra (Theorem \ref{main-multip-theorem}). One key idea used in the proof is the bijective correspondence between the $m$-dimensional subspaces of an $n$-dimensional space and the reduced row echelon form of $m\times n$ matrices of rank $m$. We then use the multiplication formula to show in general the existence of Hall polynomials for cyclic quivers (c.f. \cite{Guo1995hallpoly} and \cite{Ringel1993composition}). As a further applications of the formula, we derive a recursively formulas for computing Hall polynomials and compute the canonical basis for (the $+$-part of) an quantum affine $\mathfrak{gl}_n$. This requires a systematic construction of a certain monomial basis. Thanks to \cite{DengDuXiao2007generic}, we will use the theory there to derive a couple of algorithms on matrices and will then follow them to produce the required monomial basis. 

Computing canonical bases is in general very difficult. Besides some lower rank cases of finite type (see, e.g., \cite[\S3]{Lusztig1990canonical} for types $A_1$ and $A_2$ and \cite{XicanonicalA31999, XicanonicalB21999} for type $A_3,B_2$) and certain tight monomials for quantum affine $\mathfrak{sl}_2$ (\cite{Lu93a}), there seems no explicit affine examples done in the literature. We now use the multiplication formua to compute several infinite series
of the canonical basis for $\U_v(\wih{\fkg\fkl}_2)$.
To ease the difficulty, we divided the basis into the so-called ``slices'' labelled by the Lowey length $\ell(M)$ and the periodicity $p(M)$ associated with a representation $M$ of a cyclic quiver. We explicitly compute several slices of the canonical basis associated with modules of Lowey length at most 2 for quantum affine $\mathfrak{gl}_2$. In a forthcoming paper, we will give further applications to the theory of quantum loop algebras of $\mathfrak{sl}_n$ developed in \cite{DengDuXiao2007generic}.
 
The paper is roughly divided into two parts. The first part from \S2 to \S4 deals with the theory of integral Hall 
algebras associated with finite fields, including the existence of Hall polynomials (Theorem \ref{Hall polynomials}) and a recursive formula (Corollary \ref{recursive formula}). The rest sections focus on computation of canonical basis for the (generic and twisted) Ringel--Hall algebras and quantum affine $\mathfrak{gl}_n$. With a selected monomial basis, we formulate Algorithm \ref{another const for hall alg} to compute the canonical basis. Five slices of the canonical basis for quantum affine $\mathfrak{gl}_2$ are explicitly worked out; see Propositions \ref{slice11} and \ref{slice20} and Theorems~\ref{theorem for case (2,1)} and \ref{slice22}.
\tableofcontents

\vspace{-3ex}
\subsection*{Notation}
For a positive integer $n$, let $M_{\vartri,n}(\bbz)$ be the set of all $\bbz\times\bbz$ matrices $A=(a_{i,j})_{i,j\in\bbz}$ with $a_{i,j}\in\bbz$
such that
\begin{itemize}
  \item [(1)] $a_{i,j}=a_{i+n,j+n}$ for $i,j\in\bbz$, and
  \item [(2)] for every $i\in\bbz$, both the set $\{j\in\bbz\mid a_{i,j}\neq0\}$ and $\{j\in\bbz\mid a_{j,i}\neq 0\}$ are finite.
\end{itemize}

Let $\ttz_\vartri(n)=M_{\vartri,n}(\bbn)$ be the subset of $M_{\vartri,n}(\bbz)$ consisting of matrices with entries from $\bbn$. Let
\begin{equation*}
  \ttz^+_\vartriangle(n)=\{A\in \ttz_\vartriangle(n)\mid a_{ij}=0~\text{for}~i\geqs j\}~\text{and}~\ttz^-_\vartriangle(n)=\{A\in \ttz_\vartriangle(n)\mid a_{ij}=0~\text{for}~i\leqs j\}.
\end{equation*}
For $A\in\ttz_\vartri(n)$, write
\begin{equation*}
  A=A^++A^0+A^-,
\end{equation*}
where $A^0$ is the diagonal submatrix of $A$, $A^+\in\ttz_\vartri^+(n),$ and $A^-\in\ttz_\vartri^-(n)$.

The {\it core} of  a matrix $A$ in $\Theta_\vartri^{+}(n)$ is the $n\times l$ submatrix of $A$ consisting of rows from 1 to $n$ and columns from 1 to $l$, where $l$ is the column index of the right most non-zero entry in the given $n$ rows.

Set $\bbz_{\vartri}^n=\{(\lz_i)_{i\in\bbz}\mid \lz_i\in\bbz,\lz_i=\lz_{i-n}~\text{for}~i\in\bbz\}$ and
$\bbn_{\vartri}^n=\{(\lz_i)_{i\in\bbz}\in\bbz_{\vartri}^n\mid \lz_i\geqs 0~\text{for}~i\in\bbz\}$.
%Sometimes we identify $\bbz_{\vartri}^n$ with $\bbz^n$
%via the natural bijection $\bbz_{\vartri}^n\lra\bbz^n,~(\cdots,j_1,\cdots,j_n,\cdots)\mapsto (j_1,j_2,\cdots,j_n)$.
For each $A\in M_{\vartri,n}(\bbz)$,
let
\begin{equation*}
  \row(A)=(\sum_{j\in\bbz}a_{i,j})_{i\in\bbz}\in \bbz_{\vartri}^n,\quad \col(A)=(\sum_{i\in\bbz}a_{i,j})_{j\in\bbz}\in \bbz_{\vartri}^n.
\end{equation*}

Define an order relation $\leqs$ on $\bbn_\vartri^n$ by 
$$\lz\leqs \mu\llra \lz_i\leqs \mu_i\;(1\leqs i\leqs n).$$
We say $\lz<\mu$ if $\lz\leqs\mu$ and $\lz\neq\mu$.

Let $\bbq(v)$ be the fraction field of $\cz:=\bbz[v,v^{-1}]$. For integers $N,t$ with $t\geqs 0$
and $\mu\in\bbz_{\vartri}^n$ and $\lz\in\bbn_{\vartri}^n$,
define Gaussian polynomial and their symmetric version in $\cz$:
\begin{equation*}
  \left[\!\!\left[N\atop t\right]\!\!\right]=\dfrac{[\![\begin{matrix} N \end{matrix}]\!]!}
  {[\![\begin{matrix} t \end{matrix}]\!]![\![\begin{matrix} N-t \end{matrix}]\!]!}=\prod_{1\leqs i\leqs t}\dfrac{v^{2(N-i+1)-1}}{v^{2i}-1}\quad
   \quad\text{and}\quad
   \begin{bmatrix}
     N\\t
   \end{bmatrix}=v^{-t(N-t)}\left[\!\!\left[\begin{matrix} N\\ t \end{matrix}\right]\!\!\right],
\end{equation*}
where $
  [\![\begin{matrix} t \end{matrix}]\!]!=[\![\begin{matrix} 1 \end{matrix}]\!][\![\begin{matrix} 2 \end{matrix}]\!]\cdots
  [\![\begin{matrix} t \end{matrix}]\!]\quad\text{with}\quad [\![\begin{matrix} m \end{matrix}]\!]=\dfrac{v^{2m}-1}{v^2-1}.
$

For a prime power $q$, we write $\left[\!\!\left[\begin{matrix} N\\ t \end{matrix}\right]\!\!\right]_q$ for the value of the polynomial at $v^2=q$.
\section{The integral Hall algebras of cyclic quivers and Hall polynomials}
Let $\Delta=\Delta(n)$ $(n\geqs 2)$ be the cyclic quiver
%$$\xymatrix{&&&n\ar[ddlll]&&&\\&&&&&&\\1\ar[r]& 2\ar[r]& \cdot\ar[r]& \cdots\cdots\ar[r] &\cdot\ar[r]& \cdot\ar[r]& n-1\ar[uulll]}$$
with vertex set $I:=\bbz/n\bbz=\{1,2,\cdots,n\}$ and arrow set $\{i\ra i+1\mid i\in I\}$, and let $k\Delta$ be the path
algebra of $\Delta$ over a field $k$. For a representation $M=(V_i,f_i)_i$ of $\Delta$,
let ${\bf dim}M=(\dim V_1,\dim V_2,\cdots,\dim V_n)\in\bbn I=\bbn^n$ and $\dim M=\sum\limits_{i=1}^n\dim V_i$
denote the dimension vector and the dimension of $M$, respectively, and let $[M]$ denote the isoclass (isomorphism class) of $M$.

A representation $M=(V_i,f_i)_i$ of $\Delta$ over $k$ (or a $k\Delta$-module) is called {\em nilpotent} if the composition $f_n\cdots f_2f_1:V_1\ra V_1$
is nilpotent, or equivalently, one of the $f_{i-1}\cdots f_nf_1\cdots f_i:V_i\ra V_i$ $(2\leqs i\leqs n)$ is nilpotent. By $\Rep^0\Delta=\Rep^0_k\Delta(n)$
we denote the category of finite dimensional nilpotent representations of $\Delta(n)$ over $k$. For each vertex $i\in I$, there is a
one-dimensional representation $S_i$ in $\Rep^0\Delta$ satisfying $(S_i)_i=k$ and $(S_i)_j=0$ for $j\neq i$. It is known that $\{S_i\mid i\in I\}$ forms
a complete set of simple objects in $\Rep^0\Delta$. 

For $M\in\Rep^0\Delta$, we denote by $\rad(M)$ the radical of $M$, \ie the intersection of all maximal submodules of $M$,
and by $\ttop(M)=M/\rad(M)$, the top of $M$.

Up to isomorphism, all non-isomorphic indecomposable representations in $\Rep^0\Delta$ are given by $S_i[l]$ $(i\in I~\text{and}~l\geqs 1)$ of length $l$ with top $S_i$. Note that $S_i[l]$ can be described by vector spaces and linear maps around the cyclic quiver:
\begin{equation}\label{ind mod}
\xymatrix{
0\ar[r]^(0.53){0}&k\ar[r]^(0.53){1}&k
\ar[r]^(0.53){1}&\cdots\ar[r]^(0.53){1}&k\ar[r]^(0.53){1}&k\ar[r]^(0.53){0}&0\cdots
}\end{equation}
Here the number of $k$'s is $l$ and the first $k$ is at vertex $i$, the second at $i+1$, ..., the $(n+i)$th is again at vertex $i=n+i$, etc.

For $i<j$, set
\begin{equation*}
  M^{i,j}=S_i[j-i]~\text{and}~M^{i+n,j+n}=M^{i,j}.
\end{equation*}
For any $A=(a_{i,j})\in\ttz_\vartri^+(n)$, let
\begin{equation*}
  M(A)=M_k(A)=\bps_{1\leqs i\leqs n,i<j}a_{i,j}M^{i,j}.
\end{equation*}
Then the set $\{M_k(A)\mid A\in \ttz_{\vartri}^+(n)$ forms a complete set of all non-isomorphic finite dimensional nilpotent representations of $\ddz(n)$. If $k$ is a finite field of $q=q_k$ elements, we write
$M_q(A)=M_k(A)$.

Every element $\alpha=(\alpha_i)_{i\in\bbz}\in\bbn_\vartri^n$ defines a semisimple representation
$$S_\alpha=\oplus_{i=1}^n\alpha_iS_i.$$

A matrix $A=(a_{i,j})\in\ttz_{\vartri}^+(n)$ is called {\em aperiodic} if, for each $l\geqs 1$, there exists $i\in\bbz$ such that $a_{i,i+l}=0$.
Otherwise, $A$ is called {\em periodic}. A nilpotent representation $M(A)$ is called aperiodic (resp. periodic) if $A$ is aperiodic (resp. periodic).

%It is well known that there exist Auslander-Reiten sequence in $\Rep^0\ddz(n)$; see \cite{AuslanderReitenSmaloe1995representation}. More precisely, for each
%$i\in I$ and each $l\geqs 1$, the sequence
%\begin{equation*}  0\lra S_{i+1}[l]\lra S_{i}[l+1]\oplus S_{i+1}[l-1]\lra S_i[l]\lra 0\end{equation*}
%is an Auslander-Reiten sequence,
%where we set $S_{i+1}[0]=0$ by convention. $S_{i+1}[l]$ is called the Auslander-Reiten translation of $S_i[l]$, denoted by $\tau S_i[l]$. In this case, $\tau$ indeed defines a category equivalence from $\Rep^0\ddz(n)$ to itself, called the Auslander-Reiten translation functor.

Associated to a cyclic quiver, Ringel introduced an associative algebra, the {\em Hall algebra}, which can be defined at two levels: the integral level and the generic level.

For $A,B,C\in \ttz_{\vartri}^+(n)$ and any prime power $q$, let $\fkh_{M_q(B),M_q(C)}^{M_q(A)}$ be the number of submodules $N$ of $M_q(A)$ such that
$N\cong M_q(C)$ and $M_q(A)/N\cong M_q(B)$. More generally, given $A,B_1,B_2,\cdots,B_m\in\ttz_{\vartri}^+(n)$, denote by $\fkh_{M_q(B_1),M_q(B_2),\cdots,M_q(B_m)}^{M_q(A)}$
the number of filtrations
\begin{equation*}
  M_q(A)=M_0\supseteq M_1\supseteq M_2\supseteq \cdots\supseteq M_{m-1}\subseteq M_m=0,
\end{equation*}
such that $M_{t-1}/M_t\cong M_q(B_t)$ for $1\leqs t\leqs m$. 

The {\it (integral) Hall algebra} $\fkH_\vartri^\diamond(n,q)$ associated with $\Rep^0_k\ddz(n)$ over a finite field $k$ of $q$ elements, is the free $\bbz$-module spanned by basis $\{u_{A,q}:=u_{[M_q(A)]}\mid A\in\}$ with multiplication\footnote{The multiplication is denoted by $\circ$ in \cite{DengDuXiao2007generic}.} given by .
\begin{equation*}
  u_{B,q}\diamond u_{C,q}=\sum_{A\in\ttz_{\vartri}^+(n)}\fkh_{M_q(B),M_q(C)}^{M_q(A)}u_{A,q}.
\end{equation*}

By a result in \cite{Guo1995hallpoly,Ringel1993composition}, the Hall numbers $\fkh_{M_q(B),M_q(C)}^{M_q(A)}$ are polynomials in $q$ with integral coefficients. We now provide an independent proof for the fact, building on the following multiplication formula. A generic version of this formula is given by Fu and the first author in \cite{DuFu2015quantum},
using the technique of Hecke algebras, affine $q$-Schur algebras, and the new realisation of the quantum loop algebra of $\mathfrak{gl}_n$.

\begin{thm}\label{main-multip-theorem}
For $A\in\Theta_{\vartriangle}^+(n),~\az=(\az_i)_{i\in\bbz}\in\bbn_{\vartriangle}^n$, we have the following multiplication formula in the Hall algebra $\fkH_\vartri^\diamond(n,q)$:
  \begin{equation*}
    u_{\az,q}\diamond u_{A,q}=\sum_{\stackrel{T\in \Theta_{\vartriangle}^+(n)}{\row(T)=\az}}{q}^{\sum\limits_{\stackrel{1\leqs i\leqs n}{i<l<j}}(a_{ij}t_{il}-t_{ij}t_{i+1,l})}\prod_{\stackrel{1\leqs i\leqs n}{j\in\bbz,j>i}}\left[\!\!\left[\begin{matrix} a_{ij}+t_{ij}-t_{i-1,j}\\ t_{ij} \end{matrix}\right]\!\!\right]_qu_{A+T-\widetilde{T}^+,q},
  \end{equation*}
where
$  ^\thicksim:\ttz_\vartri(n)\ra\ttz_\vartri(n), A=(a_{i,j})\mapsto \wit{A}=(\wit{a}_{i,j})$
is the row-descending map defined by $\wit{a}_{i,j}=a_{i-1,j}$ for all $i,j\in\bbz$ and $\wit{T}^+$ denotes the upper triangular submatrix of $\wit T$. 
\end{thm}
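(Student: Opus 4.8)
The plan is to compute the product $u_{\az,q}\diamond u_{A,q}$ directly from the definition, by counting, for each $B\in\ttz_\vartri^+(n)$, the number of submodules $N\subseteq M_q(B)$ with $M_q(B)/N\cong S_\az$ and $N\cong M_q(A)$. Since $S_\az=\bigoplus_i\az_iS_i$ is semisimple, such an $N$ is precisely a submodule with $N\supseteq\rad M_q(B)$ and $\dim(M_q(B)/N)=\az$; equivalently, choosing $N$ amounts to choosing, at each vertex $i$, an $\az_i$-codimensional subspace of $\ttop_i:=(M_q(B)/\rad M_q(B))_i$ that is large enough to make the quotient a module (the module condition is automatic once we work with the top, since $S_\az$ is semisimple and the arrows act as zero on it). So the first step is to set up this reformulation carefully: fix $M=M_q(A)$, and parametrise the pairs $(M',\pi)$ where $M'$ is an extension $0\to M\to M'\to S_\az\to 0$ together with the quotient map, modulo isomorphism fixing $M$; the isoclass $[M']$ is $M_q(B)$ for $B=A+T-\wit T^+$ and we must show the number of such pairs giving a fixed $B$ equals the stated coefficient.

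The combinatorial heart is the announced bijection between $m$-dimensional subspaces of an $n$-dimensional vector space over $\bbf_q$ and $m\times n$ reduced row echelon matrices of rank $m$; I would use this in the form that counts subspaces with prescribed "interaction" with a given flag. Concretely, $M_q(A)$ has a canonical filtration by the submodules $S_i[l]$-layers, recorded by the columns of $A$; adding the top semisimple quotient $S_\az$ means, for each arrow $i\to i+1$, prescribing how the new $\az_i$-dimensional piece at vertex $i$ maps into the socle layers of $M$ at vertex $i+1$. Encoding these choices by a matrix $T\in\ttz_\vartri^+(n)$ with $\row(T)=\az$ — where $t_{ij}$ records how much of the degree-$i$ part of the new top "reaches into" the $M^{i,j}$-summand — the number of reduced echelon forms realising a fixed $T$ is a product of Gaussian binomials, and the position count of the pivots contributes the power of $q$. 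The shift by $\wit T^+$ (the row-descending map applied to $T$, upper-triangular part) is exactly the bookkeeping for the fact that attaching a new top element at vertex $i$ that threads through $M^{i+1,j}$ lengthens that string from the top, i.e. replaces an $M^{i+1,j}$ by an $M^{i,j}$: this is where $A+T-\wit T^+$ comes from. I would verify the target matrix is in $\ttz_\vartri^+(n)$ (i.e. entries stay nonnegative — guaranteed by the constraint $0\le t_{ij}\le a_{i-1,j}+\cdots$ implicit in the binomial being nonzero) and that $\bdim M_q(B)=\bdim M_q(A)+\az$.

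The two numerical factors then need to be pinned down precisely. The Gaussian-binomial factor $\prod \left[\!\!\left[\begin{smallmatrix} a_{ij}+t_{ij}-t_{i-1,j}\\ t_{ij}\end{smallmatrix}\right]\!\!\right]_q$ should emerge as the number of ways of choosing, inside the appropriate $\bbf_q$-space of dimension $a_{ij}+t_{ij}-t_{i-1,j}$, a $t_{ij}$-dimensional subspace transverse to the part already used at the previous vertex — this is where the $-t_{i-1,j}$ correction enters, reflecting that strings passing vertex $i-1$ already occupy room at vertex $i$. The power ${q}^{\sum(a_{ij}t_{il}-t_{ij}t_{i+1,l})}$ for $i<l<j$ should be the number of free parameters in the "off-pivot" entries of the reduced echelon matrices, after the subspace dimensions are fixed; it is a routine but delicate count of the positions strictly to the right of the pivots, and I expect matching this exponent exactly — including the mixed term $-t_{ij}t_{i+1,l}$ — to be the main obstacle, since it requires carefully tracking which echelon entries are genuinely free versus forced to zero by the module (arrow-compatibility) conditions across two consecutive vertices. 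Once the exponent is identified with that dimension count, summing over all reduced echelon forms with a fixed $T$, then over all valid $T$ with $\row(T)=\az$, yields the formula; comparison with $\fkh^{M_q(B)}_{S_\az,M_q(A)}$ completes the proof, and as a by-product the Hall numbers are visibly polynomials in $q$, giving Theorem~\ref{Hall polynomials}.
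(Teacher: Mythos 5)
Your plan follows the same route as the paper's proof: identify the coefficient of $u_{B,q}$ with the number of submodules $N\subseteq M_q(B)$ having $N\cong M_q(A)$ and semisimple quotient $S_\az$, encode the choice of $N$ (equivalently, of a surjection onto the top) by reduced row-echelon matrices, read off the target matrix as $A+T-\wit T^+$, and extract the Gaussian binomials and the power of $q$ from the echelon count. The interpretation of $T$ and of the shift by $\wit T^+$ is exactly right. However, as written the proposal stops short of the two computations that actually carry the proof, and in one place its heuristic points in the wrong direction.

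First, the reduction to a single vertex is not just a convenient reformulation but a lemma that needs proof: since the quotient is semisimple, $L=M_q(B)$ decomposes as $\bigoplus_i L_i$ according to the isotypic components of its top, and one must show $\fkh^L_{S_\az,N}=\prod_{i=1}^n\fkh^{L_i}_{M_i,N_i}$ (the paper's Lemma \ref{product lem}, via $U=(U\cap L_1)+\cdots+(U\cap L_n)$). Second, the single-vertex count (the paper's Proposition \ref{basiccountinglem}) is where the RREF bijection is actually deployed: one represents the degree-$i$ component $g_1$ of a surjection $g:L_i\to M_i$ by an RREF matrix $T_U$, proves from the commutativity with the arrows that $T_U$ must be block upper triangular with full-rank diagonal blocks of sizes $d_t\times a_t$, and then counts such matrices to get $q^{\sum_{k<l}d_k(a_l-d_l)}\prod_t\bigl[\!\!\bigl[{a_t\atop d_t}\bigr]\!\!\bigr]_q$. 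Your description of "choosing a subspace transverse to the part already used" gestures at this but does not establish the block structure, which is the crux. Third, and most importantly, the exponent you flag as "the main obstacle" is not obtained by directly counting free off-pivot entries in the form stated in the theorem: the natural count yields $\sum_{i<l<j}t_{il}(a_{ij}-t_{i-1,j})$, and the mixed term $-t_{ij}t_{i+1,l}$ only appears after proving the separate combinatorial identity $\sum_{1\leqs i\leqs n,\,i<l<j}t_{il}t_{i-1,j}=\sum_{1\leqs i\leqs n,\,i<l<j}t_{ij}t_{i+1,l}$, which uses the $n$-periodicity $t_{i,j}=t_{i+n,j+n}$ to match the two index sets. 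Your expectation that the mixed term falls out of the echelon count directly would not pan out; without the reindexing identity the proof does not close.
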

 
We will prove this result in the next section. We first use the formula to prove the existence of Hall polynomials.

Let $\cm$ be the set of all isoclasses of representation in $\Rep^0\ddz(n)$.
Given two objects $M,N\in\Rep^0\ddz(n)$, there exists a unique (up to isomorphism) extension $G$ of $M$ by $N$ with minimum $\dim\End(G)$\cite{Bongartz1996degenerations,Reineke2001generic,DengDu2005monomial,DengDuParashallWang2008finite}.
The extension $G$ is called the {\em generic extension}\footnote{There exists geometrical description
when the field $k$ is algebraically closed, for details, see \cite{Reineke2001generic}.} of $M$ by $N$ and is
denoted by $G=M*N$. If we define $[M]*[N]=[M*N]$, then it is known from \cite{Reineke2001generic} that $*$ is
associative and $(\cm,*)$ is a monoid with identity $[0]$.

Besides the monoid structure, $\cm$ has also a poset structure. For two nilpotent representations $M,N\in\Rep^0\ddz(n)$ with
${\bf dim} M={\bf dim} N$, define
$$N\leqs_{\dg}M\llra \dim\Hom(X,N)\geqs\dim\Hom(X,M),~\text{for all}~X\in\Rep^0\ddz(n).$$
see \cite{Zwara1997degenerations}. This gives rise to a partial order on the set of isoclasses of representations in $\Rep^0\ddz(n)$,
called the {\em degeneration order}. Thus, it also induces a partial order on $\ttz_\vartri^+(n)$ by setting
$$A\leqs_{\dg} B\llra M(A)\leqs_{\dg} M(B).$$

Following \cite{BeilinsonLusztigMacPherson1990geometric} and \cite{DuFu2010modified} we may define the order relation
$\preccurlyeq$ on $M_{\vartri,n}(\bbz)$ as follows. For $A\in M_{\vartri,n}(\bbz)$ and $i\neq j\in\bbz$, let
\begin{equation*}\sz_{i,j}(A)=
  \begin{cases}
    \sum\limits_{s\leqs i,t\geqs j}a_{s,t},&\text{if}~i<j,\\
    \sum\limits_{s\geqs i,t\leqs j}a_{s,t},&\text{if}~i>j.
  \end{cases}
\end{equation*}
For $A,B\in M_{\vartri,n}(\bbz)$, define
$$B\preccurlyeq A~\text{if and only if~}\sz_{i,j}(B)\leqs\sz_{i,j}(A)~\text{for all}~i\neq j.$$
Set $B\prec A$ if $B\preccurlyeq A$, and for some $(i,j)$ with $i\neq j,\sz_{i,j}(B)<\sz_{i,j}(A)$. 

Note that restricting the order relation to $\ttz_{\vartri}^+(n)$ gives a poset $(\ttz_{\vartri}^+(n),\preccurlyeq)$.
Note also from \cite[Theorem 6.2]{DuFu2010modified} that, if $A,B\in\ttz_{\vartri}^+(n)$,
then 
\begin{equation}\label{dg order}
B\leqs_{\dg}A\llra B\preccurlyeq A \text{ and }  \bdim M(A)=\bdim M(B).
\end{equation}
Thus, $(\ttz_{\vartri}^+(n),\leqs_{\dg})$ is also a poset.

An element $\lz\in\bbn_\vartri^n$ is called {\em sincere} if $\lz_i>0$ for all $i\in I$. Let 
$$I^{\text{sin}}=\{\text{all sincere vectors in }\bbn_\vartri^n\}\;\;\text{ and }\;\;
\wit{I}=I\cup I^{\text{sin}}.$$ 
For $X\in\{I,I^{\text{sin}},\wit I\}$,  Let $\ssz_X$ be the set of words on the alphabet $X$ and let $\wit{\ssz}=\ssz_{\wit I}$.

For each $w={\bm a}_1{\bm a}_2\cdots{\bm a}_m\in\wit{\ssz}$, we set $M(w)=S_{{\bm a}_1}*S_{{\bm a}_2}*\cdots*S_{{\bm a}_m}$.
Then there is a unique $A\in\ttz_\vartri^+(n)$ such that $M(w)\cong M(A)$, and we set $\wp(w)=A$, which induces a surjective map $\wp:\wit{\ssz}\lra\ttz_\vartri^+(n),w\mapsto \wp(w)$. Note that $\wp$ induces a surjective map $\wp:\ssz\ra\ttz_\vartri^{ap}(n)$.

For $\bm a\in\wit{I}$, set $u_{\bm a}=u_{[S_{\bm a}]}$. For any $w={\bm a}_1{\bm a}_2\cdots{\bm a}_m\in\wit{\ssz}$ and $A\in \ttz_\vartri^{+}(n)$, repeatedly applying Theorem \ref{main-multip-theorem} shows that there exists a polynomial $\vz^A_w(\bm q)\in\bbz[\bm q_k]$ such that $\vz^A_w(q)=\fkh^M_{M_1,M_2,\cdots,M_m}$ with $M_i\cong S_{\bm a_i}$ and $M\cong M_k(A)$. 

Any word $w={\bm a}_1{\bm a}_2\cdots{\bm a}_m\in\wit{\ssz}$ can be uniquely expressed in the {\em tight form} $w={\bm b}_1^{e_1}{\bm b}_2^{e_2}\cdots{\bm b}_t^{e_t}$ where $e_i=1$ if ${\bm b_i}$ is sincere, and $e_i$ is the number of consecutive occurrence of $\bm b_i$ if $\bm b_i\in I$. By \cite[Lem.~5.1]{DengDuXiao2007generic} (see also the proof of \cite[Prop.~9.1]{DengDu2005monomial}), $\varphi_w^{A}$ is divisible by $\prod_{i=1}^t[\![e_i]\!]^!$ for every $A\preceq \wp(w)$. Thus, there exists $\gamma_w^{A}\in\bbz[\bm q]$ such that 
$$\varphi_w^{A}=\prod_{i=1}^t[\![e_i]\!]^!\gamma_w^{A}\in\bbz[\bm q].$$
Note that the polynomials $\gamma_w^{A}$ are also Hall polynomials.
In fact,
for a finite field $k$, we have $\gamma^A_w(q_k)=\fkh^M_{N_1,N_2,\cdots,N_m}$ with $N_i\cong e_iS_{\bm b_i}$ and $M\cong M_k(A)$. A word $w$ is called {\em distinguished} if the Hall polynomial $\gz^{\wp(w)}_w(\bsq)=1$. 

%A filtration $$M=M_0\supset M_1\supset \cdots\supset M_{t-1}\supset M_t=0$$ of nilpotent representation $M$ is called a {\em reduced} filtration of type $w$ if
%$M_{r-1}/M_r\cong e_rS_{\bm b_r}$ for all $1\leqs r\leqs t$. Denote by $\gz^A_{w}(\bsq)$ the Hall polynomial $\vz^A_{B_1,B_2,\cdots,B_t}(\bsq)$, where
%$M(B_r)=e_rS_{\bm b_r}$. Thus, for any finite field $k$ with $q_k$ elements, $\gz^A_{w}(q_k)$ is the number of the reduced filtrations of $M(A)$ of type $w$.

As a first application, we now use the multiplication formula to prove the existence of Hall polynomials. This result was first given in \cite{Guo1995hallpoly}, \cite[8.1]{Ringel1993composition}. 
\begin{thm}\label{Hall polynomials}
 The Hall numbers $\fkh_{M_q(B),M_q(C)}^{M_q(A)}$ associated with  $A,B,C\in\ttz_{\vartri}^+(n)$ and any prime power $q$ are polynomials in $q$. In other words, there exist $\varphi_{B,C}^A\in\bbz[\bm q]$ such that
 $\varphi_{B,C}^A(q)=\fkh_{M_q(B),M_q(C)}^{M_q(A)}$ for all such $q$.
\end{thm}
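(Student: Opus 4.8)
The plan is to derive Theorem~\ref{Hall polynomials} as a fairly direct consequence of the multiplication formula in Theorem~\ref{main-multip-theorem}, by reducing a general Hall number $\fkh_{M_q(B),M_q(C)}^{M_q(A)}$ to an iterated product of the ``semisimple'' multiplications that Theorem~\ref{main-multip-theorem} controls. First I would recall that $M_q(B)$ has a composition series whose factors are the simples $S_i$ with multiplicities read off from $\bdim M(B)$; more usefully, $M_q(B)$ is filtered by its radical layers, so that $M_q(B)$ is obtained by iterated extensions of semisimple modules $S_{\az^{(1)}},\dots,S_{\az^{(r)}}$ where $\az^{(k)}\in\bbn_\vartri^n$ records the $k$-th radical layer of $M(B)$. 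This gives, by the standard counting identity for Hall numbers under refinement of filtrations, an expression
\[
\fkh_{M_q(B),M_q(C)}^{M_q(A)}\cdot c_B(q)=\sum_{\substack{C=C_0,C_1,\dots,C_r=A}}\ \prod_{k=1}^{r}\fkh_{S_{\az^{(k)}},M_q(C_{k-1})}^{M_q(C_k)},
\]
where $c_B(q)=\fkh_{S_{\az^{(1)}},\dots,S_{\az^{(r)}}}^{M_q(B)}$ is the number of filtrations of $M_q(B)$ with the prescribed semisimple layers. The point of choosing the radical filtration is that $c_B(q)$ is \emph{independent of the decomposition on the $C$-side} and, being itself a Hall number of the form handled by iterating Theorem~\ref{main-multip-theorem} in the algebra $\fkH_\vartri^\diamond(n,q)$, is a polynomial in $q$; moreover $c_B(q)\neq 0$ for every prime power $q$ since the radical filtration always exists.

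The key steps, in order: (1) fix the radical-layer datum $(\az^{(1)},\dots,\az^{(r)})$ of $M(B)$ and observe via the associativity of Hall multiplication (equivalently, the filtration-counting identity above) that both $\fkh_{M_q(B),M_q(C)}^{M_q(A)}c_B(q)$ and $c_B(q)$ are obtained by expanding products $u_{\az^{(r)},q}\diamond\cdots\diamond u_{\az^{(1)},q}$ (respectively $u_{\az^{(r)},q}\diamond\cdots\diamond u_{\az^{(1)},q}\diamond u_{C,q}$) in the basis $\{u_{A,q}\}$; (2) apply Theorem~\ref{main-multip-theorem} repeatedly to each such product: at every stage the coefficient that appears is a power of $q$ times a product of Gaussian polynomials $\left[\!\!\left[\begin{smallmatrix}a_{ij}+t_{ij}-t_{i-1,j}\\ t_{ij}\end{smallmatrix}\right]\!\!\right]_q$, and the sum over the intermediate matrices $T$ is finite (it is constrained by $\row(T)=\az^{(k)}$ and by the dimension vectors, which are fixed), so each product expands into a \emph{$\bbz[\bsq]$-linear combination} of basis elements $u_{A,q}$; (3) conclude that $\fkh_{M_q(B),M_q(C)}^{M_q(A)}c_B(q)\in\bbz[q]$ and $c_B(q)\in\bbz[q]$ with $c_B(q)$ having no positive-integer roots, hence the quotient $\fkh_{M_q(B),M_q(C)}^{M_q(A)}$ is, for each fixed $(A,B,C)$, a function of $q$ that agrees with a fixed rational function $\varphi_{B,C}^A(\bsq)\in\bbq(\bsq)$; (4) upgrade ``rational function'' to ``polynomial in $\bbz[\bsq]$'': since $\fkh_{M_q(B),M_q(C)}^{M_q(A)}$ is a nonnegative integer for infinitely many prime powers $q$, and $\varphi_{B,C}^A=\bigl(\text{polynomial}\bigr)/c_B$ with $c_B$ a product of factors $[\![e_i]\!]^!$-type Gaussian expressions as in the ``tight form'' discussion preceding the theorem, the divisibility statement already quoted from \cite[Lem.~5.1]{DengDuXiao2007generic} shows the numerator is divisible by $c_B$ in $\bbz[\bsq]$, so the quotient lies in $\bbz[\bsq]$.

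The main obstacle I anticipate is Step~(4): controlling the denominator. The multiplication formula only produces Hall numbers times the normalizing constant $c_B(q)$ as manifest polynomials, so one genuinely has to know that the division is exact over $\bbz[\bsq]$ and not merely over $\bbq$ for each evaluation. Two routes are available and I would pursue whichever the paper has set up: either (a) invoke the quoted divisibility $\varphi_w^A=\prod_{i}[\![e_i]\!]^!\gamma_w^A$ with $\gamma_w^A\in\bbz[\bsq]$ — this is exactly the statement that the refinement constant $c_B$ divides the refined count in the polynomial ring, and it directly gives $\varphi_{B,C}^A=\gamma_w^A$ for an appropriate word $w$ built from the radical layers — or (b) choose the filtration of $M(B)$ to be the \emph{full} composition series into simples one vertex at a time, in which case each layer is a single $S_i$, so $c_B(q)=\prod_i [\![ b_i ]\!]^!$ where $b_i$ counts the simples $S_i$, and the divisibility again follows from \cite[Lem.~5.1]{DengDuXiao2007generic}; the price of (b) is slightly messier bookkeeping of the intermediate sum, but it avoids needing $M(B)$ semisimple-layered. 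A secondary point to check is well-definedness: the value $\varphi_{B,C}^A(q)$ must not depend on the chosen word/filtration, which is immediate because it equals the Hall number for every prime-power $q$ and a polynomial is determined by infinitely many values. Everything else is routine induction on $\dim M(B)$.
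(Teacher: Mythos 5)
There is a genuine gap, and it sits in your very first display. The ``refinement identity'' you invoke is false as stated: by associativity of the Hall product, the right-hand side $\sum_{C=C_0,\dots,C_r=A}\prod_{k}\fkh^{M_q(C_k)}_{S_{\az^{(k)}},M_q(C_{k-1})}$ equals $\sum_{[Z]}\fkh^{Z}_{S_{\az^{(r)}},\dots,S_{\az^{(1)}}}\,\fkh^{M_q(A)}_{Z,M_q(C)}$, a sum over \emph{every} isoclass $Z$ admitting a filtration with semisimple layers $S_{\az^{(r)}},\dots,S_{\az^{(1)}}$, not just $Z\cong M_q(B)$. Taking the radical filtration of $M(B)$ does not make $M(B)$ the unique such $Z$: already for $n=2$ and $M(B)=S_1[2]$ the layers are $(S_1,S_2)$, and $S_1\oplus S_2$ admits a filtration with the same layers, so $u_{S_1}\diamond u_{S_2}=u_{[S_1[2]]}+u_{[S_1\oplus S_2]}$ and your identity silently discards the second summand. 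Consequently your steps (2)--(4) compute the (genuinely polynomial) quantity $\varphi^{A}_{w,C}=\sum_{Z}\fkh^{Z}_{w}\fkh^{A}_{Z,C}$ attached to the word $w$ of the filtration, which is not $c_B(q)\,\fkh^{A}_{B,C}$, and dividing by $c_B(q)$ does not isolate the Hall number you want.

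What is true --- and what the paper's proof exploits --- is that for a \emph{distinguished} word $w$ with $\wp(w)=B$, the module $M(w)=M(B)$ is the generic extension of its layers, hence the $\preceq$-maximal $Z$ occurring, and every other contributing class $B'$ satisfies $B'\prec B$. One then writes $u_{w}\diamond u_{C}=\fkh^{B}_{w}(u_{B}\diamond u_{C})+\sum_{B'\prec B}\fkh^{B'}_{w}(u_{B'}\diamond u_{C})$, applies Theorem \ref{main-multip-theorem} to make all word-coefficients polynomials, and solves for $\fkh^{A}_{B,C}$ by induction on $\preceq$, the terms $\fkh^{A}_{B',C}$ being polynomials by the inductive hypothesis and the resulting difference being divisible by $\fkh^{B}_{w}=\prod_{i}[\![e_i]\!]^!$. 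Your closing remark that ``everything else is routine induction on $\dim M(B)$'' cannot repair this: the offending classes $B'$ have the same dimension vector (and dimension) as $B$, so the induction must run over the degeneration order, and your argument as written never confronts these lower terms. Your steps (2)--(3) (iterated use of Theorem \ref{main-multip-theorem}) and the divisibility input from \cite[Lem.~5.1]{DengDuXiao2007generic} are exactly the right ingredients; the missing idea is the $\preceq$-triangularity of $u_w$ and the induction it enables.
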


\begin{proof} For $w={\bm b}_1{\bm b}_2\cdots{\bm b}_t\in\wit{\ssz}$, if
we write in $\fkH_\vartri^\diamond(n,q)$
$$u_{w,q}=u_{{{\bm b}_1,q}}\diamond\cdots \diamond u_{{{\bm b}_m,q}}=\sum_{B'\preceq \wp(w)}\fkh_{w}^{B'}u_{B',q},$$
Then, by Theorem \ref{main-multip-theorem}, there exist polynomials $\varphi_w^{B'}$ such that $\varphi_w^{B'}(q_k)=\fkh_{w}^{B'}$.  Assume now $w$ is distinguished (see \cite[Th.~6.2]{DengDuXiao2007generic}) such that $B=\wp(w)$, $M=M(B)$, $L=M(A)$ and $N=M(C)$. Then $\varphi_w^B=\prod_{i=1}^t[\![e_i]\!]^!$ and $\varphi_w^{B'}/\varphi_w^B=\gamma_w^{B'}$ are all polynomials.

Now, by Theorem \ref{main-multip-theorem} again,
 the Hall numbers in $u_w\diamond u_C=\sum_{A'\preceq B*C}\fkh_{w,C}^{A}u_{A}$ are the values of certain polynomials $\varphi_{w,C}^{A}$ at $q_k$.
 On the other hand,
$$\aligned
u_w\diamond u_C&=\sum_{B'\preceq B}\fkh_{w}^{B'}(u_{B'}\diamond u_C)\\
&=\fkh_{w}^Bu_B\diamond u_C+\sum_{B'\prec B}\fkh_{w}^{B'}(u_{B'}\diamond u_C)\\
&=\fkh_{w}^Bu_B\diamond u_C+\sum_{A\prec B'*C}\bigg(\sum_{B'\prec B}\fkh_{w}^{B'}\fkh_{B',C}^{A}\bigg)u_{A}.
\endaligned$$
By equating coefficients, we see that all polynomials  $\varphi_{w,C}^{A}$ is divisible by $\varphi_w^B$.
Thus, we have
$$\fkh_{w}^Bu_B\diamond u_C=\sum_{A\preceq B*C}\fkh_{w,C}^{A}u_{A}-\sum_{A\prec B'*C}\bigg(\sum_{B'\prec B}\fkh_{w}^{B'}\fkh_{B',C}^{A}\bigg)u_{A}$$
Now the assertion follows from induction on $\preceq$.
\end{proof}
In \S4, we will give algorithms to compute distinguished words $w_A$ associated with each $A\in\ttz_\vartri^{+}(n)$ and to derive a recursive formula for Hall polynomials.

\section{Proof of Theorem \ref{main-multip-theorem}}

Recall that a matrix over a field in row-echelon form is said to be in {\it reduced row-echelon form} (RREF) if every leading column has 1 at the leading entry and 0 elsewhere.

\begin{lem}\label{countfinitefield}Let $\mathcal R_{m,n}\subseteq M_{m,n}(\bbf_q)$ be the subset consisting of all $m\times n$ matrices in reduced row-echelon form and of rank $m$. Then
$$|\mathcal R_{m,n}|=\left[\!\!\left[n\atop m\right]\!\!\right]_{v^2=q}.$$

%If $A=(a_{ij})_{m\times n}\in M_{m\times n}(\bbf_q)$ over finite field $\bbf_q$ with $\rk A=m$,
%then the number of the reduced row echelon form $T$ of $A$ by Gauss elimination is
%\begin{equation*} [\![\begin{smallmatrix} n\\ m \end{smallmatrix}]\!]=\dfrac{(q^{n}-1)(q^{n}-q)\cdots(q^{n}-q^{m-1})}{|\GL_n(\bbf_q)|}=\dfrac{(q^{n}-1)(q^{n}-q)\cdots(q^{n}-q^{m-1})}{(q^{m}-1)(q^m-q)\cdots(q^m-q^{m-1})}\end{equation*}
\end{lem}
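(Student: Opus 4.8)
The plan is to count reduced row-echelon matrices of rank $m$ directly by stratifying according to the positions of the pivot columns. First I would recall that any $A\in\mathcal R_{m,n}$ has exactly $m$ pivot (leading) columns, say in positions $1\leqslant j_1<j_2<\cdots<j_m\leqslant n$, and that the RREF condition pins down those $m$ columns completely: column $j_r$ is the standard basis vector $e_r$. So the data in $A$ that is free to vary sits entirely in the non-pivot columns, and the combinatorics reduces to counting, for each admissible pivot set, how many ways the remaining entries can be filled in over $\bbf_q$.

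Next I would analyse the free entries. In row-echelon form, row $r$ has its pivot in column $j_r$ and is zero to the left of it; the RREF condition additionally forces zeros above each pivot. Hence in a non-pivot column $j$ the only entries that may be nonzero are those in rows $r$ with $j_r<j$, i.e. the rows whose pivot lies strictly to the left of column $j$. If $j$ lies strictly between $j_r$ and $j_{r+1}$ (with the conventions $j_0=0$, $j_{m+1}=n+1$), then exactly $r$ entries of column $j$ are free, each ranging over $\bbf_q$, contributing a factor $q^{r}$. Multiplying over all non-pivot columns, the number of matrices with pivot set $\{j_1,\dots,j_m\}$ is $q^{\sum_{r}r\,(j_{r+1}-j_r-1)}$. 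Summing over all $\binom{n}{m}$ pivot sets gives
\begin{equation*}
  |\mathcal R_{m,n}|=\sum_{1\leqslant j_1<\cdots<j_m\leqslant n} q^{\sum_{r=1}^{m} r\,(j_{r+1}-j_r-1)}.
\end{equation*}
It then remains to identify this sum with the Gaussian polynomial $\left[\!\!\left[{n\atop m}\right]\!\!\right]$ evaluated at $v^2=q$. This is the standard $q$-binomial identity: writing $a_r=j_r-r$ so that $0\leqslant a_1\leqslant a_2\leqslant\cdots\leqslant a_m\leqslant n-m$, the exponent $\sum_r r\,(j_{r+1}-j_r-1)$ rearranges (by Abel summation) into $\sum_{r=1}^m a_r$, and the sum becomes the generating function $\sum q^{a_1+\cdots+a_m}$ over weakly increasing sequences in $\{0,1,\dots,n-m\}$, which is exactly the Gaussian polynomial $\left[\!\!\left[{n\atop m}\right]\!\!\right]_{v^2=q}$. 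I would either cite this classical fact or prove it by the Pascal-type recursion (splitting on whether $a_m=n-m$ or $a_m<n-m$), which matches the recursion $\left[\!\!\left[{n\atop m}\right]\!\!\right]=q^{m}\left[\!\!\left[{n-1\atop m}\right]\!\!\right]+\left[\!\!\left[{n-1\atop m-1}\right]\!\!\right]$ coming from whether column $n$ is a pivot column or not.

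The genuine content of the argument is the bookkeeping in the second step: correctly seeing, from the two defining conditions of RREF (zeros below-left of a pivot from echelon form, zeros above a pivot from the "reduced" condition), that a non-pivot column in the $r$-th "gap" contributes precisely $q^r$ and nothing more. Everything after that is a routine manipulation of the $q$-binomial generating function, so I expect the only place to be careful is the exponent count and the change of variables $a_r=j_r-r$; the identification with $\left[\!\!\left[{n\atop m}\right]\!\!\right]_{v^2=q}$ is then immediate from the product formula given in the Notation section.
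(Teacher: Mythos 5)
Your proof is correct, but it takes a genuinely different route from the paper. The paper's proof is a one-line bijection: it sends $T\in\mathcal R_{m,n}$ to its row space $V_T$, observes that this gives a bijection onto the set $\mathcal V_{m,n}$ of $m$-dimensional subspaces of $\bbf_q^n$, and then quotes the standard count $|\mathcal V_{m,n}|=\left[\!\!\left[{n\atop m}\right]\!\!\right]_{v^2=q}$. You instead count $\mathcal R_{m,n}$ directly by stratifying over the pivot set $\{j_1<\cdots<j_m\}$ and counting the free entries in the non-pivot columns; this is in effect the Schubert-cell decomposition of the Grassmannian made explicit, and it has the advantage of being self-contained (it does not presuppose the subspace count, and in fact reproves it), at the cost of requiring the $q$-binomial generating-function identity at the end. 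One small bookkeeping point: with your change of variables $a_r=j_r-r$, the exponent $\sum_{r=1}^m r(j_{r+1}-j_r-1)$ equals $m(n-m)-\sum_r a_r$ rather than $\sum_r a_r$ (check $m=1$, $n=2$: the pivot set $\{1\}$ has $a_1=0$ but contributes $q^1$). This does not affect the conclusion, since replacing each weakly increasing sequence $(a_r)$ by $\bigl((n-m)-a_{m+1-r}\bigr)_r$ is an involution on the index set, reflecting the palindromic symmetry of the Gaussian polynomial, so the sum $\sum q^{m(n-m)-\sum a_r}$ over all such sequences still equals $\left[\!\!\left[{n\atop m}\right]\!\!\right]_{v^2=q}$; but as literally stated your Abel-summation claim should be corrected.
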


\begin{proof} Let $\mathcal V_{m,n} $ be the set of all dimension $m$ subspaces of $\bbf_q^n$. Then,
for $T\in \mathcal R_{m,n}$, the rows of $T$ spans a subspace $V_T$ of dimension $m$. Thus, we have a map
$$f:\mathcal R_{m,n}\longrightarrow\mathcal V_{m,n}, T\longmapsto V_T.$$
Clearly, $f$ is surjective. It is not hard to see that $f$ is also injective. Now, the assertion follows from the bijection.
\end{proof}

\begin{prop}\label{basiccountinglem}
  For $i\in I$, $a_t,d_t,m\in\bbz$ with $a_t\geqs d_t\geqs0,~m\geqs1$, $t=1,2,\cdots,m$, and representations 
  \begin{equation*}
  \begin{split}
    L&=a_1S_i\oplus a_2S_i[2]\oplus\cdots\oplus a_mS_i[m],\quad M=(d_1+\cdots+d_m)S_i, \text{ and }\\
    N&=(a_1-d_1)S_i\oplus\big((a_2-d_2)S_i[2]\oplus d_2S_{i+1}\big)\oplus\cdots\oplus\big((a_m-d_m)S_i[m]\oplus d_mS_{i+1}[m-1]\big),
  \end{split}
  \end{equation*}
in {\rm Rep}$^0_k(\Delta)$, 
 the Hall number $\fkh^L_{M,N}$ is a polynomial in $q=q_k$:
  $$\fkh^L_{M,N}=q^{\sum\limits_{1\leqs k<l\leqs m}d_k(a_l-d_l)}\left[\!\!\left[\begin{matrix} a_1\\ d_1 \end{matrix}\right]\!\!\right]_q\left[\!\!\left[\begin{matrix} a_2\\ d_2 \end{matrix}\right]\!\!\right]_q\cdots
  \left[\!\!\left[\begin{matrix} a_m\\ d_m \end{matrix}\right]\!\!\right]_q.$$
\end{prop}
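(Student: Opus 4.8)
The plan is to reduce the computation to a counting problem over the finite field $\bbf_q$ and then to apply Lemma~\ref{countfinitefield}. First I would describe $L$ explicitly as a representation around the cyclic quiver: since every summand of $L$ has top $S_i$, the module $L$ is supported (as far as its socle series forces) in a neighbourhood of the vertex $i$, and a submodule $N'\subseteq L$ with $L/N'\cong M\cong (d_1+\cdots+d_m)S_i$ must have $N'$ of the same dimension vector as $N$ and must be ``generated away from the top at vertex $i$''. Concretely, I would fix bases so that $L=\bigoplus_{t=1}^m a_t S_i[t]$ is given by coordinate vector spaces at each vertex with the standard shift maps; then a submodule $N'$ with semisimple quotient $(d_1+\cdots+d_m)S_i$ concentrated at vertex $i$ corresponds exactly to choosing, inside the top $\bigoplus_t a_t\,(\text{copy of }k)$ at vertex $i$, a subspace complementary in the appropriate sense — that is, $N'$ is determined by its intersection with the degree-one part $(S_i)^{\oplus(a_1+\cdots+a_m)}$ at vertex $i$ together with the requirement that $N'$ be a submodule.

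The key observation I expect to use is the bijective correspondence (the same idea behind Lemma~\ref{countfinitefield}) between submodules of the required type and certain matrices in reduced row-echelon form. More precisely, the ``top layer'' of $L$ at vertex $i$ is the quotient $L/\rad L\cong \bigoplus_{t=1}^m k^{a_t}$, and a submodule $N'$ with $L/N'\cong M$ corresponds to a subspace $W\subseteq L/\rad L$ of codimension $d_1+\cdots+d_m$, subject to the constraint that $W$ is compatible with the filtration by the $a_t$-blocks coming from the different lengths $t$ (because applying the path maps pushes the $t$-block into the $(t-1)$-block, so the image of $N'$ under the path around the quiver must stay inside the smaller blocks). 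Thus $N'$ is encoded by a flag-type datum: choose a $d_1$-dimensional ``quotiented'' piece from the length-$1$ block, a $d_2$-piece from the length-$2$ block, etc., and these choices are independent once one fixes echelon representatives. Counting such configurations gives a product of Gaussian binomials $\prod_t\left[\!\!\left[a_t\atop d_t\right]\!\!\right]_q$, while the number of ways the lower blocks can ``see'' the already-chosen higher blocks contributes the power $q^{\sum_{k<l}d_k(a_l-d_l)}$ — this is the standard $q$-counting weight when a chosen subspace of dimension $d_k$ in a $k$-th block can be translated by an arbitrary homomorphism into the free part of dimension $a_l-d_l$ of a later block.

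So the steps in order are: (1) write $L$, $M$, $N$ in explicit coordinates around $\Delta$; (2) show that any short exact sequence $0\to N'\to L\to M\to 0$ forces $N'\cong N$ and identify $N'$ with a tuple of subspaces $W_t\subseteq k^{a_t}$ with $\dim(k^{a_t}/W_t)=d_t$ together with a ``gluing'' homomorphism recording how the submodule generated by higher blocks meets lower ones; (3) use Lemma~\ref{countfinitefield} to count the choices of each $W_t$, obtaining $\prod_t\left[\!\!\left[a_t\atop d_t\right]\!\!\right]_q$; (4) count the gluing data, obtaining $q^{\sum_{k<l}d_k(a_l-d_l)}$; (5) multiply. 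The main obstacle I anticipate is step (2): carefully verifying that a submodule with the prescribed semisimple quotient $M$ concentrated at vertex $i$ must have exactly the stated isomorphism type $N$ — i.e. that the ``$d_t$ units removed from the $t$-block reappear as $d_t$ copies of $S_{i+1}[t-1]$'' — and that the parametrizing data really is a free choice of subspaces plus an unconstrained homomorphism, with no hidden compatibility conditions. Once the parametrization is pinned down, the counting is a routine application of Lemma~\ref{countfinitefield} and the elementary fact that $\dim\Hom(k^{d_k},k^{a_l-d_l})=d_k(a_l-d_l)$.
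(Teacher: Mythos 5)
Your plan is essentially the paper's proof: fix coordinates for $L$ around the quiver, observe that a submodule $U$ with $L/U\cong M$ is the kernel of an epimorphism concentrated at vertex $i$, encode that epimorphism by a canonical echelon-form datum respecting the block decomposition $k^{a_1+\cdots+a_m}=\bigoplus_t k^{a_t}$, and count via Lemma \ref{countfinitefield} for the diagonal blocks plus free off-diagonal entries. The paper packages your ``subspaces $W_t$ plus gluing homomorphisms'' into a single $(d_1+\cdots+d_m)\times(a_1+\cdots+a_m)$ matrix $T_U$ in reduced row-echelon form whose $d_t\times a_t$ diagonal blocks have rank $d_t$; the rank-$d_t$ blocks give $\prod_t\left[\!\!\left[a_t\atop d_t\right]\!\!\right]_q$ and the free entries sitting in the $a_l-d_l$ non-leading columns of the later blocks give $q^{\sum_{k<l}d_k(a_l-d_l)}$, exactly your counts.

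The one genuine flaw is the assertion in your step (2) that every short exact sequence $0\to N'\to L\to M\to 0$ forces $N'\cong N$. This is false, and taken literally it would yield $\fkh^L_{M,N}=\left[\!\!\left[{a_1+\cdots+a_m}\atop{d_1+\cdots+d_m}\right]\!\!\right]_q$ rather than the stated formula. Submodules with quotient $\cong M$ correspond to \emph{all} codimension-$(d_1+\cdots+d_m)$ subspaces of $\ttop(L)_i$, and their isomorphism types vary with the position of the subspace relative to the block filtration: for $L=S_i\oplus S_i[2]$ and $M=S_i$ one obtains both $N'\cong S_i[2]$ (profile $(d_1,d_2)=(1,0)$) and $N'\cong S_i\oplus S_{i+1}$ (profile $(0,1)$). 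The condition $N'\cong N$ is precisely the extra requirement that the induced quotient map have rank exactly $d_t$ on the $t$-th block; the paper proves this equivalence by comparing $\dim\im(p_j\cdots p_1|_U)$ with $\dim\im(p_j\cdots p_2 f)$ for each $j$, which is the step you would need to supply. Your subsequent parametrization by $W_t\subseteq k^{a_t}$ with $\dim(k^{a_t}/W_t)=d_t$ implicitly builds this condition in, so once ``forces'' is replaced by ``is equivalent to the profile condition'' and that equivalence is proved, the rest of your count goes through.
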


\begin{proof}
%Set ${\mathscr{G}}^L_{M,N}=\{U\subseteq L\mid U\cong N,L/U\cong M\}$ and ${\mathfrak{F}}=\big\{g:L\ra M\ra0\big|\ker g\cong N\big\}.$
%Consider the group $G=\Aut(M)$ acting freely on $\fkF$ by $\phi\circ g=\phi g$, which induces a bijection between the $G$-orbit $\co_g$ and $\scg^L_{M,N}$ by
%$g\mapsto \ker g$. Thus $|\fkF|=|\scg_{M,N}^L||G|$ and $\fkH_{M,N}^L=|\scg^{L}_{M,N}|=\dfrac{|\fkF|}{|G|}$.
Without loss, we may assume $i=1$.
Represent the modules $L,N$ by vector spaces and linear maps around the cyclic quiver as follows (cf. \eqref{ind mod}):
$$\xymatrix{
L:k^{a_1+a_2+\cdots+a_m}\ar[r]^(0.54){p_1}&k^{a_2+a_3+\cdots+a_m}\ar[r]^(0.53){p_2}&k^{a_3+\cdots+a_m}
\ar[r]^(0.6){p_3}&\cdots\ar[r]^(0.35){p_{m-2}}&k^{a_{m-1}+a_m}\ar[r]^(0.6){p_{m-1}}&k^{a_m}\\
N:k^{a_1-d_1+a_2-d_2+\cdots+a_m-d_m}\ar[r]^(0.64){f}&k^{a_2+a_3+\cdots+a_m}\ar[r]^(0.53){p_2}&k^{a_3+\cdots+a_m}
\ar[r]^(0.6){p_3}&\cdots\ar[r]^(0.35){p_{m-2}}&k^{a_{m-1}+a_m}\ar[r]^(0.6){p_{m-1}}&k^{a_m}.
}$$
Here $p_i$ is the projection map defined by the matrix $[0_{a_i}~I_{\widetilde a_{i+1}}]$, where
$$\widetilde a_i:=a_{i}+\cdots+a_m$$  and  $0_{a_i}$
 is the $\widetilde a_{i+1}\times a_i$ zero matrix, while $f$ is the restriction of $p_1$. Thus, $f$ projects the component $k^{a_1-d_1}$ to 0 and imbeds the component $k^{a_i-d_i}$ for $i\geq2$ into the component $k^{a_i}$ via the $a_i\times (d_i-a_i)$ matrix $J_i=\bigg(\begin{matrix}I_{d_i-a_i}\\ 0\end{matrix}\bigg)$. In other words, $f$ is defined by the $\widetilde a_2\times (\widetilde a_1-\widetilde d_1)$ matrix $A$ with blocks $J_1, J_2,\ldots,J_m$ on the diagonal, where $J_1$ is the $a_1\times (d_1-a_1)$ zero matrix.

Let $U\leq L$ be a submodule such that $U\cong N,L/U\cong M$. Then $U=\ker(g)$ for some module epimorphism $g:L\to M$. Thus, the short exact sequence $0\to U\to L\to M\to0$ gives the following commutative diagram:
$$\xymatrix{
U\ar[d]^\iota&\ker g_1\ar[d]^{\iota_1}\ar[r]^(0.4){p_1}&k^{a_2+a_3+\cdots+a_m}\ar[d]^{id}\ar[r]^(0.53){p_2}&k^{a_3+\cdots+a_m}\ar[d]^{id}
\ar[r]^(0.6){p_3}&\cdots\ar[r]^(0.35){p_{m-2}}&k^{a_{m-1}+a_m}\ar[d]^{id}\ar[r]^(0.6){p_{m-1}}&k^{a_m}\ar[d]^{id}\\
L\ar[d]^g&k^{a_1+a_2+\cdots+a_m}\ar[d]^{g_1}\ar[r]^(0.5){p_1}&k^{a_2+a_3+\cdots+a_m}\ar[d]\ar[r]^(0.53){p_2}&k^{a_3+\cdots+a_m}\ar[d]\ar[r]^(0.6){p_3}
&\cdots\ar[r]^(0.35){p_{m-2}}&k^{a_{m-1}+a_m}\ar[d]\ar[r]^(0.6){p_{m-1}}&k^{a_m}\ar[d]\\
M\ar[d]&k^{d_1+d_2+\cdots+d_m}\ar[d]\ar[r]&0\ar[r]&0\ar[r]&\cdots\ar[r]&0\ar[r]&0\\
0& 0
}$$
%Here the first linear map of $\ker g$ has been written by the matrix $B$, which is determined by the composition of $\iota_1$ and $[0~id]$.
Since $g$ is surjective, it is easy to see $\ker g_1\cong k^{a_1-d_1+\cdots+a_m-d_m}$ as vector spaces.
Represent the linear map $g_1:k^{a_1+\cdots+a_m}\ra k^{d_1+\cdots+d_m}$ by a $\widetilde d_1\times\widetilde a_1$ matrix $T_U$ in reduced row-echelon form. Since $g_1$ is onto, $T_U$ is an upper triangular matrix with $\widetilde d_1$ leading columns and $\ell=\widetilde a_1-\widetilde d_1$ non-leading columns, corresponding to $\ell$ free variables $x_{i_1},x_{i_2},\ldots,x_{i_\ell}$. Let $v_j$ be the solution to $T_Ux=0$ obtained by setting $x_{i_j}=1$ and other free variables to 0.
Then, $\ker g_1$ has a basis $v_1,v_2,\ldots,v_\ell$.

Since $U\cong N$, there exist linear isomorphism $\phi=(\phi_1,\phi_2,\cdots,\phi_m)$ making the following diagram commutes
$$\xymatrix{
U\ar[d]_{\cong}^\phi&\ker g_1\ar[d]^{\phi_1}\ar[r]^(0.45){p_1}&k^{a_2+a_3+\cdots+a_m}\ar[d]^{\phi_2}\ar[r]^(0.53){p_2}&k^{a_3+\cdots+a_m}\ar[d]^{\phi_3}
\ar[r]^(0.6){p_3}&\cdots\ar[r]^(0.35){p_{m-2}}&k^{a_{m-1}+a_m}\ar[d]^{\phi_{m-1}}\ar[r]^(0.6){p_{m-1}}&k^{a_m}\ar[d]^{\phi_m}\\
N&\!\!\!\!\!k^{a_1-d_1+a_2-d_2+\cdots+a_m-d_m}\ar[r]^(0.6){f}&k^{a_2+a_3+\cdots+a_m}\ar[r]^(0.53){p_2}&k^{a_3+\cdots+a_m}\ar[r]^(0.6){p_3}
&\cdots\ar[r]^(0.35){p_{m-2}}&k^{a_{m-1}+a_m}\ar[r]^(0.6){p_{m-1}}&k^{a_m}
}$$
Hence, the images of $p_i\ldots p_2p_1$ in the top row maps must have the same dimension as that of the map $p_i\ldots p_2f$ below.
Since the dimension of $\im(f)$ is $\widetilde a_2-\widetilde d_2$, $p_1$ must send $v_1,\ldots,v_{a_1-d_1}$ to 0. This forces the first $a_1$ columns contains $d_1$ leading columns. Similarly, $\dim\im(p_2p_1)=\dim\im(p_2f)$ forces the next $a_2$ columns in $T_U$ contains $d_2$ leading columns, and so on. This proves that, if $T_U$ is divided in $d_i\times a_j$ blocks, then $T_U$ is upper triangular with $m$ $(d_i\times a_i)$-blocks on the diagonal each of which has rank $d_i$.

Let
$\mathcal T$ be the subset of all $T\in M_{\widetilde d_1,\widetilde a_1}(\bbf_q)$ such that $T$ is in RREF and $T$ has $m$ $(d_i\times a_i)$-blocks $B_i$ on the diagonal each of which has rank $d_i$. The arguement above shows that the map $U\mapsto T_U$ is a bijection from the set $\{U\subseteq L\mid U\cong N,L/U\cong M\}$ to $\mathcal T$. Hence, $\fkh^L_{M,N}=|\mathcal T|$.

Now, to form such a matrix $T$, by Lemma \ref{countfinitefield}, the number of the $(d_1\times a_1)$-block $B_1$ is $\big[\!\!\big[{a_1\atop d_1}\big]\!\!\big]$ and the number of other $(d_1\times a_i)$-blocks for $i\geq2$ in the first $d_1$ rows is $q^{d_1(a_2-d_2+a_3-d_3+\cdots+a_m-d_m)}$. Counting the number of the blocks in the next $d_2$ rows, $d_3$ rows, $\ldots$, similarly, yields
$$|\mathcal T|=q^{d_1(a_2-d_2+a_3-d_3+\cdots+a_m-d_m)}\left[\!\!\left[a_1\atop d_1\right]\!\!\right]_q\times q^{d_2(a_3-d_3+\cdots+a_m-d_m)}\left[\!\!\left[a_2\atop d_2\right]\!\!\right]_q\times\cdots\times q^{d_{m-1}(a_m-d_m)} \left[\!\!\left[a_m\atop d_m\right]\!\!\right]_q,$$
as desired.
\end{proof}

\begin{rmk} A dual version of the above result, where the roles $M$ and $N$ are swapped, is known in \cite[\S2.2]{Sc06} and have been used in \cite[Lem.~2.3.5]{FL}. Unlike the representation-theoretic proof above, the proof in loc. cit. involves the geometry of the
Grassmanian variety.
\end{rmk}

\begin{lem}\label{product lem} For nilpotent representations $L,M,N$ of $\triangle(n)$, if $N\leq L$ and $L/N\cong M$ is semisimple, then there exists submodules $L_i\leq L$  $N_i\leq N$ and $M_i\leq M$ such that $L=\bigoplus_{i=1}^nL_i$, $N=\bigoplus_{i=1}^nN_i$, $M=\bigoplus_{i=1}^nM_i$ and
$$\fkh_{M,N}^L=\prod_{i=1}^n\fkh_{M_i,N_i}^{L_i}.$$
\end{lem}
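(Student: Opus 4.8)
The plan is to split $L$, $N$, $M$ along the $n$ vertices according to the tops of indecomposable summands, and then to convert the claimed Hall-number identity into a bijection. First I would record the elementary point that, since $L/N\cong M$ is semisimple, the radical of the path algebra annihilates $L/N$, whence $\rad L\subseteq N$; thus $\ol N:=N/\rad L$ is a submodule of the semisimple module $\ttop L=L/\rad L$. Fixing a Krull--Schmidt decomposition of $L$, let $L_i$ be the sum of those indecomposable summands with top $S_i$, so $L=\bps_{i=1}^nL_i$, $\rad L=\bps_i\rad L_i$, and $\ttop L=\bps_i\ttop L_i$ with $\ttop L_i$ the $S_i$-isotypic part of $\ttop L$, say $\ttop L_i\cong m_i'S_i$. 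Since the $\ttop L_i$ are the isotypic components of a semisimple module, $\ol N=\bps_i\ol N_i$ with $\ol N_i=\ol N\cap\ttop L_i$; set $M_i:=\ttop L_i/\ol N_i\;(\cong m_iS_i)$, so that $M\cong L/N\cong\ttop L/\ol N\cong\bps_iM_i$, and $M=\bps_iM_i$ with $M_i\le M$ its $S_i$-isotypic part. Finally let $N_i\le L_i$ be the unique submodule with $\rad L_i\subseteq N_i$ and $N_i/\rad L_i=\ol N_i$; then $N_i\le L_i$, $N_i\le N$, $L_i/N_i\cong M_i$, and $N=\bps_iN_i$ (because $\sum_iN_i$ contains $\sum_i\rad L_i=\rad L$ and maps onto $\bps_i\ol N_i=\ol N$). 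This produces the required decompositions $L=\bps L_i$, $N=\bps N_i$, $M=\bps M_i$.

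For the Hall-number identity, the map $(U_i)_i\mapsto\bps_iU_i$ embeds $\prod_{i=1}^n\{U_i\le L_i\mid U_i\cong N_i,\ L_i/U_i\cong M_i\}$ into $\{U\le L\mid U\cong N,\ L/U\cong M\}$ (a one-sided inverse being $U\mapsto(U\cap L_i)_i$), which already gives $\prod_i\fkh^{L_i}_{M_i,N_i}\le\fkh^L_{M,N}$. Conversely, given any $U\le L$ with $U\cong N$ and $L/U\cong M$: since $L/U$ is semisimple, $\rad L\subseteq U$, and the construction above applied to $U$ in place of $N$ shows $U=\bps_i(U\cap L_i)$ with $U\cap L_i\le L_i$, $\rad L_i\subseteq U\cap L_i$, and $L_i/(U\cap L_i)$ isomorphic to the $S_i$-isotypic part of $L/U\cong M$, i.e.\ to $M_i$. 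Hence $U$ lies in the image of the embedding provided one knows that $U\cap L_i\cong N_i$ for every $i$; granting this the embedding is a bijection and the equality follows.

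The crux, therefore, is the isomorphism-class compatibility: if $U_i,N_i\le L_i$ with $\rad L_i\subseteq U_i,N_i$, $L_i/U_i\cong L_i/N_i\cong M_i$ for all $i$, and $\bps_iU_i\cong\bps_iN_i$, then $U_i\cong N_i$ for each $i$. I would prove this by a counting argument at each vertex. Every such \emph{admissible} submodule $V$ of $L_i$ has the same dimension vector $\bdim\rad L_i+(m_i'-m_i)\cdot\mathbf e_i$, and -- because $\Ext^1(S_i,S_k[l])\neq 0$ only when $k=i+1$ -- every indecomposable summand of $V$ has top $S_i$ or $S_{i+1}$, with length bounded by the summand lengths of $L_i$; concretely $V$ is obtained from $\rad L_i$ by lifting $m_i'-m_i$ vectors at vertex $i$, each of which either contributes a free $S_i$ or extends one uniserial piece $S_{i+1}[l-1]=\rad S_i[l]$ of $\rad L_i$ up to $S_i[l]$, and $[V]$ is determined by which pieces are extended. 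The key is that the invariants which detect this -- e.g.\ $\dim\Ext^1(S_i,V)$, which counts the unextended pieces, and the $\dim\Hom(S_i[l],V)$ for $l\ge1$ -- localize: on $\bps_jV_j$ the contributions of the $V_j$ with $j\notin\{i-1,i,i+1\}$ vanish, and those of the remaining $j\neq i$ are constant over all admissible submodules of $L_j$ (again by $\Ext^1(S_i,S_k[l])=0$ for $k\neq i+1$, and because quantities like $\dim\Hom(S_i,V_j)$ and $\dim\Hom(V_j,S_{i+1})$ depend only on the summand lengths of $L_j$, not on the admissible $V_j$). Subtracting these known constants from the values on $\bps_jN_j\cong\bps_jU_j$ leaves linear relations that pin down the invariants of $U_i$, hence recover $[U_i]=[N_i]$.

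So Steps setting up the decomposition and the easy inequality $\prod_i\fkh^{L_i}_{M_i,N_i}\le\fkh^L_{M,N}$ are routine; I expect the genuine obstacle to be the last localization step. Keeping track of how the uniserial summands of an admissible submodule of $L_i$ spread onto the neighbouring vertices $i$ and $i+1$ is what makes the bookkeeping delicate, and arranging the $\Hom$- and $\Ext$-dimension relations so that they actually isolate the vertex-$i$ contribution is the part of the argument that needs care.
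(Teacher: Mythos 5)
Your proposal follows the same route as the paper: decompose $L=\bigoplus_{i=1}^nL_i$ according to the isotypic components of $\ttop(L)$, take $M_i$ to be the $S_i$-isotypic part of $M$ and $N_i=N\cap L_i$, and reduce $\fkh^L_{M,N}=\prod_i\fkh^{L_i}_{M_i,N_i}$ to the bijectivity of $(U_1,\dots,U_n)\mapsto U_1+\cdots+U_n$. The real difference is that the paper merely asserts this bijection (its proof ends with ``noting $U=(U\cap L_1)+\cdots+(U\cap L_n)$''), whereas you correctly single out the only nontrivial point, surjectivity: from $U\cong N$ one only gets $\bigoplus_i(U\cap L_i)\cong\bigoplus_iN_i$, and one must exclude a trade of isomorphic indecomposable summands between different vertices. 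Your argument for this is sound: writing $L_j=\bigoplus_m a^{(j)}_mS_j[m]$, an admissible $U_j=U\cap L_j$ is isomorphic to $\bigoplus_m\big((a^{(j)}_m-d^{(j)}_m)S_j[m]\oplus d^{(j)}_mS_{j+1}[m-1]\big)$, and a direct check shows that $\dim\Hom(S_i[l],U_j)$ is independent of the $d^{(j)}_m$ whenever $j\neq i$, while for $j=i$ it equals a constant depending only on $L_i$ minus $\sum_{m\leqs l}d^{(i)}_m$; hence $U\cong N$ forces $d^{(i)}_m(U)=d^{(i)}_m(N)$ for all $i,m$, i.e.\ $U\cap L_i\cong N_i$. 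Two remarks: first, the contributions of $U_j$ for $j\notin\{i-1,i,i+1\}$ do \emph{not} vanish in general (for $l,m\geqs n$ the homomorphisms wrap around the cycle), but they are constant in the $d$'s, which is all your argument needs; second, the delicate bookkeeping you anticipate in the last step can be avoided entirely, since the multiplicity of $S_j[m]$ in $\bigoplus_iU_i$ equals $a^{(j)}_m-d^{(j)}_m+d^{(j-1)}_{m+1}$ and $d^{(j)}_m=0$ for $m>\ell(L)$, so a downward induction on $m$ recovers every $d^{(j)}_m$ directly from the Krull--Schmidt decomposition of $U$ without any $\Hom$ or $\Ext$ computations.
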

\begin{proof} Let top$(L)_i$ denote the isotypic component of top$(L)$ associated with $S_i$. Then $L=\oplus_{i=1}^nL_i$ where top$(L_i)=\text{top}(M)_i$. Thus, if $M_i$ denotes the isotypic component of $M$ associated with $S_i$ and $\pi:L\to M$ denotes the quotient map, then restriction defines an epimorphism $\pi_i=\pi|_{L_i}:L_i\to M_i$. Let $N_i=\pi_i^{-1}(M_i)$. Then $N_i=L_i\cap N$ and $N=\oplus_{i=1}^nN_i$.
Now, our assertion follows from the following bijection
$$\aligned
\prod_{i+1}^n\{U_i\leq L_i\mid U_i\cong N_i,L_i/U_i\cong M_i\}&\longrightarrow \{U\leq L\mid U\cong N,L/U\cong M\},\\
(U_1,\ldots,U_n)&\longmapsto U_1+\ldots+ U_n,\endaligned$$
noting $U=(U\cap L_1)+\cdots+(U\cap L_n)$.
\end{proof}

We are now ready to give a representation-theoretic proof for the multiplication formula in \cite[Th. 4.5]{DuFu2015quantum}.
As mentioned in the introduction, this formula is the restriction to the positive part of certain multiplication formulas for the quantum loop algebra of $\mathfrak{gl}_n$ \cite[Prop.~4.2]{DuFu2015quantum}, which is obtained from lifting some multiplication formulas in the affine $q$-Schur algebras associated with the affine Hecke algebra. See \cite[Prop.~2.3.6]{FL} for a geometric proof building on the Hall polynomials computed in  \cite[\S2.2]{Sc06}.

\begin{proof}[Proof of Theorem \ref{main-multip-theorem}]%{\em Proof of Theorem \ref{main-multip-theorem}}\quad
We first claim that, if $L$ is an extension of the semisimple representation $S_\alpha$ by $N=M(A)$, then $L\cong M(A+T-\wit{T}^+)$ for some $T\in \ttz_\vartri^+(n)$ with $\alpha=\row(T)$. Indeed, suppose
$L\cong M(C)$ for some $C=(c_{i,j})$ and decompose $L=\oplus_{i=1}^nL_i$ as in Lemma~\ref{product lem}. If $U\leq L$ is a submodule isomorphic to $N$, then there exist $t_{ij}\in \mathbb N$ such that
$U_i=U\cap L_i\cong\bigoplus_{i<j}\big((c_{ij}-t_{ij})S_i[j-i]\oplus t_{ij}S_{i+1}[j-i-1]\big)$, where $\sum\limits_{i<j}t_{ij}=\az_i$.
Thus, $U\cong N$ becomes
$$\bigoplus_{i=1}^n\bigoplus_{i<j}\big((c_{ij}-t_{ij})S_i[j-i]\oplus t_{ij}S_{i+1}[j-i-1]\big)
\cong \bigoplus_{i=1}^n\bigoplus_{i<j}a_{ij}S_i[j-i].$$
By the Krull--Remak--Schmidt theorem, we have
\begin{equation}\label{cor1}
  c_{ij}-t_{ij}+t_{i-1,j}=a_{ij}\quad\text{for all } i<j \text{ with }i=1,2,\cdots,n.
\end{equation}
Hence, if we form the upper triangular matrix $T=(t_{i,j})\in\Theta_{\vartriangle}^+(n)$ then $C=A+T-\widetilde T^+$, proving the claim.

For $C=A+T-\wit{T}^+$, by Lemma  \ref{product lem}, we have
\begin{equation}\label{Hall1}
\fkH^{C}_{S_\az,A}=\prod_{i=1}^n\fkH^{L_i}_{M_i,N_i},
\end{equation}
where $L_i\cong\bigoplus_{{j>i}}(a_{ij}+t_{ij}-t_{i-1,j})S_i[j-i]$, $M_i\cong\bigoplus_{j>i}t_{ij}S_i$ and $N_i\cong(a_{ij}-t_{i-1,j})S_i[j-i]\oplus t_{ij}S_{i+1}[j-i-1]$.
Applying Proposition \ref{basiccountinglem} with $a_l=a_{i,i+l}-t_{i,i+l}-t_{i-1,i+l}$, $d_l=t_{i,i+l}$ yields
\begin{equation}\label{Hall2}
\fkH^{L_i}_{M_i,N_i}=q^{\sum_{l,j\in\mathbb Z\atop i<l<j}t_{il}(a_{ij}-t_{i-1,j})}
\prod_{j\in\mathbb Z,i<j}\bigg[\!\!\bigg[\begin{matrix} a_{ij}+t_{ij}-t_{i-1,j}\\ t_{ij} \end{matrix}\bigg]\!\!\bigg]_q\qquad(q=q_k).
\end{equation}
Finally, it remains to prove
\begin{equation}\label{cor2}
\sum_{i=1}^n\sum_{i<l<j}t_{il}(a_{ij}-t_{i-1,j})=\sum_{i=1}^n\sum_{i<l<j}(a_{ij}t_{il}-t_{ij}t_{i+1,l}),
\end{equation}
or, equivalently, to prove
\begin{equation*}
\sum_{\stackrel{1\leqs i\leqs n}{i<l<j}}t_{il}t_{i-1,j}=\sum_{\stackrel{1\leqs i\leqs n}{i<l<j}}t_{ij}t_{i+1,l}.
\end{equation*}
 This follows from the fact that the sets $J_1=\{t_{il}t_{i-1,j}\neq0\mid 1\leqs i\leqs n,i<l<j\}$ and $J_2=\{t_{ij}t_{i+1,l}\neq0\mid 1\leqs i\leqs n,i<l<j\}$ are identical. To see this, take $t_{il}t_{i-1,j}\in J_1$ where $i<l<j$. If $2\leqs i\leqs n$, then $t_{i-1,j}t_{(i-1)+1,l}\in J_2$.
If $i=1$, then $t_{1,l}t_{0,j}=t_{n,n+j}t_{n+1,l+n}\in J_2$. Hence, $J_1\subseteq J_2$. Similarly, $J_2\subseteq J_1$ and so $J_1=J_2$.
 \end{proof}

\begin{cor}(1)
  By the extension of modules, we have
  \begin{equation*}t_{ij}\in
    \begin{cases}
       [0,\min\{\az_i,a_{i+1,j}\}],~&\text{if}~|j-i|> 1;\\
      [0,\az_i],~&\text{if}~|j-i|=1;
    \end{cases}
  \end{equation*}
and for any $i=1,2,\cdots,n,~\sum\limits_{j>i}t_{ij}=\az_i$.

(2)The power of ${\bm q}$, $\sum\limits_{\stackrel{1\leqs i\leqs n}{i<l<j}}(a_{ij}t_{il}-t_{ij}t_{i+1,l})$, is non-negative.
%(3) For each $i\in I$, from the biggest $j$ to $i+1$, we take the maximal $t_{ij}$ by every step. In this way, $A+T-\wit{T}^+$ is the generic extension
%of $A$ by $S_\az$ and $u_{A+T-\wit{T}^+}$ is the leading term in Theorem \ref{main-multip-theorem}.
\end{cor}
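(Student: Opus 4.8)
The plan is to read off both assertions from the proof of Theorem~\ref{main-multip-theorem} and the hypotheses of Proposition~\ref{basiccountinglem}; essentially no new computation is required.

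For part~(1), recall that the matrix $T=(t_{ij})\in\ttz_\vartri^+(n)$ occurring in Theorem~\ref{main-multip-theorem} arises from a submodule $U\cong N=M(A)$ of an extension $L$ of $S_\az$ by $N$: in the decomposition $L=\bigoplus_iL_i$ of Lemma~\ref{product lem} one has $U_i=U\cap L_i\cong\bigoplus_{i<j}\bigl((c_{ij}-t_{ij})S_i[j-i]\oplus t_{ij}S_{i+1}[j-i-1]\bigr)$ with $\sum_{j>i}t_{ij}=\az_i$ and $t_{ij}\in\bbn$. The identity $\sum_{j>i}t_{ij}=\az_i$ is one of the claimed relations, and together with $t_{ij}\geqs0$ it forces $t_{ij}\leqs\az_i$ for all $j$; this already disposes of the case $|j-i|=1$. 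For $|j-i|>1$ we must also produce $t_{ij}\leqs a_{i+1,j}$, and this is precisely the inequality $a_l\geqs d_l$ among the hypotheses of Proposition~\ref{basiccountinglem} in the instance used to obtain \eqref{Hall2}: there $d_l=t_{i,i+l}$ while $a_l=a_{i,i+l}+t_{i,i+l}-t_{i-1,i+l}$ is the multiplicity of $S_i[l]$ in $L_i$, so $a_l\geqs d_l$ reads $t_{i-1,i+l}\leqs a_{i,i+l}$; replacing $i$ by $i+1$ (legitimate by periodicity) and writing $j=i+1+l$ with $l\geqs1$ gives $t_{ij}\leqs a_{i+1,j}$ exactly when $j-i\geqs2$. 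Note that for $j=i+1$ the summand $t_{i,i+1}S_{i+1}[j-i-1]=t_{i,i+1}S_{i+1}[0]$ vanishes, so no constraint of this type is imposed, which is why the bound in the case $|j-i|=1$ is only $0\leqs t_{ij}\leqs\az_i$. Combining the two bounds yields $t_{ij}\in[0,\min\{\az_i,a_{i+1,j}\}]$ when $|j-i|>1$.

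For part~(2), the quickest route is the identity \eqref{cor2} established inside the proof of the theorem, which rewrites the exponent of $\bm q$ as $\sum_{i=1}^n\sum_{i<l<j}t_{il}(a_{ij}-t_{i-1,j})$. In each summand $i<l<j$ forces $j-(i-1)\geqs3>1$, so part~(1), applied with $i$ replaced by $i-1$, gives $t_{i-1,j}\leqs a_{ij}$, hence $a_{ij}-t_{i-1,j}\geqs0$; since also $t_{il}\geqs0$, every summand, and therefore the whole exponent, is non-negative. Alternatively one can bypass \eqref{cor2} entirely: by the proof of the theorem the total exponent is the sum over $i$ of the exponents appearing in \eqref{Hall2}, each of which has the shape $\sum_{k<l}d_k(a_l-d_l)$ coming from Proposition~\ref{basiccountinglem} and so is visibly a sum of products of non-negative integers. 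There is no genuine obstacle in either part; the only point to watch is the index bookkeeping under the shift $t_{i-1,j}\leftrightarrow t_{ij}$, together with the degenerate case $j=i+1$, where $S_{i+1}[0]=0$ and no spurious bound $t_{i,i+1}\leqs a_{i+1,i+1}=0$ should be claimed.
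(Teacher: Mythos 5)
Your proof is correct and follows essentially the same route as the paper: the key inequality $t_{i-1,j}\leqs a_{ij}$ is exactly the paper's deduction of $a_{ij}\geqs t_{i-1,j}$ from $c_{ij}\geqs t_{ij}$ and \eqref{cor1} (your hypothesis $a_l\geqs d_l$ from Proposition~\ref{basiccountinglem} is the same condition in disguise), and part (2) is read off from \eqref{cor2} just as in the paper. Your extra remark on the degenerate case $j=i+1$, where no bound $t_{i,i+1}\leqs a_{i+1,i+1}=0$ arises, is a correct and useful clarification of why the two cases in (1) differ.
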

\begin{proof} Since $c_{ij}\geqs t_{ij}$, it follows from \eqref{cor1} that $a_{ij}\geqs t_{i-1,j}$, proving (1). (2) follows form \eqref{cor2}.
\end{proof}

\section{Distinguished words and a recursive formula}

For $A\in\ttz_\vartri^+(n)$, denote by $\ell (A)=\ell(M(A))$ the {\em Loewy length} of $M(A)$ and define the {\it periodicity} of $M(A)$ by
$$p(A)=\begin{cases} \max\{l\in\bbn\mid a_{i,i+l}\neq 0~\text{for all}~1\leqs i\leqs n\},&\text{ if $A$ is periodic}\\
0,&\text{ if $A$ is aperiodic}.\end{cases}$$
Clearly, $0\leqs p(A)\leqs\ell(A)$. Thus, $p(A)=0$ means that $A$ is aperiodic. If $p(A)=\ell(A)$, $A$ is called {\em strongly periodic}.

We now record several results in \cite{DengDuXiao2007generic} stated in multisegments in terms of matrices. Note that if $\Pi$ is the set of all multisegments, then there is a bijection 
$$\Pi\lra\ttz_\vartri^+(n), \pi=\sum_{i\in I,l\geqs 1}\pi_{i,l}[i;l)\longmapsto A_\pi=(a_{i,i+l})_{i\in I,l\geqs 1}\text{ with }a_{i,i+l}=\pi_{i,l}.$$

\begin{prop}[{\cite[\S4]{DengDuXiao2007generic}}]\label{dist words}
\begin{enumerate}[\rm(1)]
\item For any $A\in\ttz_\vartri^+(n)$, there exists uniquely a pair $(A',A'')$ associated with $A$ such that $A'$ is strongly periodic, $A''$ is aperiodic,  and $M(A)\cong M(A'')* M(A')$.

\item For aperiodic part $A''$, there exists a distinguished word $w_{A''}=j_1^{e_1}j_2^{e_2}\cdots j_t^{e_t}\in \ssz_I\cap \wp^{-1}(A'')$.

\item For strongly periodic part $A'$, there exists a distinguished word $w_{A'}=\bm a_1\bm a_2\cdots \bm a_p\in \ssz_{I^{\text{sin}}}\cap \wp^{-1}(A')$,
  moreover, $S_{\bm a_s}\cong \soc^{p-s+1}M(A')/\soc^{p-s}M(A'),1\leqs s\leqs p=p(A)$.

\item $w_{A''}w_{A'}=j_1^{e_1}j_2^{e_2}\cdots j_t^{e_t}\bm a_1\bm a_2\cdots \bm a_p$ is a distinguished word of $A$.
\end{enumerate}
\end{prop}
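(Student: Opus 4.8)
The four parts are a matrix-theoretic repackaging of the structure theory in \cite[\S4--6]{DengDuXiao2007generic}, so the plan is to transport those results along the stated multisegment--matrix bijection $\Pi\to\ttz_\vartri^+(n)$ and to record the mechanism behind each. The one device I would use throughout is the following reading of the word ``distinguished'': for $w={\bm a}_1\cdots{\bm a}_m$ with tight form ${\bm b}_1^{e_1}\cdots{\bm b}_t^{e_t}$ one has $\gamma_w^{\wp(w)}(q_k)=\fkh^{M_k(\wp(w))}_{e_1S_{{\bm b}_1},\dots,e_tS_{{\bm b}_t}}$, and since $M(\wp(w))=S_{{\bm a}_1}*\cdots*S_{{\bm a}_m}$ is assembled by iterated generic extension it carries \emph{at least one} filtration with those successive semisimple quotients; hence $w$ is distinguished precisely when such a filtration is \emph{unique}. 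Thus each of (2), (3), (4) is really a rigidity statement about a single module.

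For (1), the plan is to extract $A'$ from the socle series of $M(A)$. Put $p=p(A)$, the largest integer with $a_{i,i+p}\ne 0$ for every $i\in I$. The starting point is that each complete-cycle summand $S_i[p]$ of $M(A)$ reaches the $j$-th socle layer ($1\le j\le p$) as $S_{i+p-j}$, and $i\mapsto i+p-j$ permutes $I$; consequently every layer $\soc^jM(A)/\soc^{j-1}M(A)$ with $1\le j\le p$ is sincere, and the same computation shows that any strongly periodic module has all of its socle layers sincere. One then takes $M(A')$ to be the strongly periodic module whose socle series is, layer by layer, the largest sincere sub-datum of that of $M(A)$ compatible with strong periodicity, and $A''$ the multisegment-theoretic complement; $A'$ is strongly periodic by construction, while $A''$ aperiodic and $M(A)\cong M(A'')*M(A')$ are checked against the generic-extension monoid $(\cm,*)$ of \cite{Reineke2001generic}. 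Uniqueness is the delicate point: in any factorisation $M(A)=X*Y$ with $Y$ strongly periodic, $Y$ is forced into the sincere part of the socle series of $M(A)$ and hence is dominated by $M(A')$, so maximality pins $A'$ down and $A''$ follows. This is \cite[\S4]{DengDuXiao2007generic} restated.

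For (2) and (3) I would take over the explicit constructions of \cite[\S5--6]{DengDuXiao2007generic}. For the aperiodic part one reduces the multisegment of $A''$ one step at a time, each time splitting off a \emph{single} simple generator: aperiodicity guarantees that at every step there is a forced leading simple with no competitor, so the resulting filtration of $M(A'')$ by powers of simples is the only one, i.e.\ $\gamma_{w_{A''}}^{A''}=1$, which yields $w_{A''}=j_1^{e_1}\cdots j_t^{e_t}\in\ssz_I$. For the strongly periodic part I would set $w_{A'}={\bm a}_1\cdots{\bm a}_p$ with $S_{{\bm a}_s}\cong\soc^{p-s+1}M(A')/\soc^{p-s}M(A')$; by the socle-layer computation above each ${\bm a}_s$ is sincere, so $w_{A'}\in\ssz_{I^{\text{sin}}}$, and $S_{{\bm a}_1}*\cdots*S_{{\bm a}_p}\cong M(A')$ because among all modules with that sequence of socle layers $M(A')$ is the one of minimal $\dim\End$. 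Distinguishedness of $w_{A'}$ is the assertion that the socle filtration is the \emph{only} filtration of $M(A')$ with semisimple quotients $S_{{\bm a}_1},\dots,S_{{\bm a}_p}$ in that order, which holds because in a strongly periodic module every socle layer already has the maximal multiplicity the layer below it permits, so no choice is available.

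Finally, for (4): since $\wp$ is a monoid homomorphism into $(\cm,*)$, $\wp(w_{A''}w_{A'})=M(A'')*M(A')\cong M(A)$ by (1), and the tight form of the concatenation is $j_1^{e_1}\cdots j_t^{e_t}{\bm a}_1\cdots{\bm a}_p$ (the sincere blocks carry exponent $1$ and do not merge with $j_t^{e_t}$). So it remains to show $M(A)$ has a unique filtration with quotients $e_1S_{j_1},\dots,e_tS_{j_t},S_{{\bm a}_1},\dots,S_{{\bm a}_p}$; the plan is to argue that the bottom $p$ steps of any such filtration must cut out a copy of $M(A')$ --- the strongly periodic ``core'' of $M(A)$ cannot be disturbed by subfactors coming from the aperiodic top, for periodicity reasons --- whence the top $t$ steps induce the unique reduced filtration of $M(A)/M(A')\cong M(A'')$; combining the uniqueness statements of (2) and (3) then closes the argument, which is in effect \cite[Thm~6.2]{DengDuXiao2007generic}. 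I expect the genuine obstacles to be exactly these rigidity claims --- pinning down $A'$ in (1) and identifying the socle filtration as the unique reduced filtration in (3) --- since that is where the aperiodic/periodic dichotomy is doing the work, and they require tracking how generic extensions interact with the socle series rather than any formal bookkeeping.
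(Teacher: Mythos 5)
The paper does not actually prove this proposition: it is explicitly \emph{recorded} from Deng--Du--Xiao, translated from multisegments into matrices, and the only justification offered in the text is the citation to \cite[\S4]{DengDuXiao2007generic} together with the later remark that $\soc^{p}(M(A))=M(A')$ and $M(A'')\cong M(A)/M(A')$ for $p=p(A)$ (made precise in Definition \ref{A'A''}). So there is no internal proof to compare you against; what can be judged is whether your reconstruction matches the mechanism of the cited source, and it does. Your identification of $M(A')$ with $\soc^{p(A)}M(A)$, your observation that the first $p$ socle layers of $M(A)$ are sincere because each summand $S_i[p]$ contributes $S_{i+p-j}$ to the $j$-th layer with $i\mapsto i+p-j$ a bijection of $I$, and your reading of ``distinguished'' as ``the generic extension admits exactly one filtration with the prescribed semisimple subquotients'' (so that $\gz_w^{\wp(w)}=1$) are exactly the devices of \cite{DengDuXiao2007generic}, and your part (4) is their Theorem~6.2.

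That said, as a proof your text is a programme rather than an argument: every genuinely non-trivial step is asserted, and you say so yourself. In (1) the uniqueness claim needs more than ``$Y$ is forced into the sincere part of the socle series'': one must show that in any factorisation $M(A)\cong X*Y$ with $Y$ strongly periodic and $X$ aperiodic one necessarily has $\ell(Y)=p(A)$ and $Y\cong\soc^{p(A)}M(A)$, which requires relating $p(X*Y)$ to $\ell(Y)$ and is precisely where aperiodicity of $X$ enters; maximality of a ``strongly periodic factor'' alone does not obviously force $X$ aperiodic. In (2) the assertion that aperiodicity leaves ``a forced leading simple with no competitor'' at each step is the content of \cite[Prop.~4.3]{DengDuXiao2007generic} and is the heart of the matter; likewise the rigidity of the socle filtration in (3) and the claim in (4) that the bottom $p$ steps of any admissible filtration of $M(A)$ must cut out $M(A')$ are the substance of the cited theorems, not consequences of formal bookkeeping. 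None of these would survive as written without importing the actual proofs. Since the paper imports them wholesale as well, your proposal is at the same level of completeness as the text itself, but it should be presented as a guide to the reference rather than as a self-contained proof.
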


A construction of distinguished words of the strongly periodic part and aperiodic part has been given in \cite{DengDuXiao2007generic}. Building on this, we now
introduce some matrix algorithms to compute certain distinguished words in order to provide a monomial basis for computing the canonical basis.

If we take $A=(a_{i,j})$, then $M(A)=\oplus_{i=1}^n\oplus_{j>i}S_i[j-i]$ and $\soc(S_i[j-i])=S_{j-1}$, $\soc^2(S_i[j-i])=S_{j-2}[2],\cdots$, $\soc^l(S_i[j-i])=S_{j-l}[l]$. Here we understand $j-1\equiv j'(\mod n)$ and if $l\geqs j-i$, $\soc^l(S_i[j-i])=S_i[j-i]$.

We review the construction of producing the unique pair $(A',A'')$ in Proposition \ref{dist words}(1). For $A\in\ttz_\vartri^+(n)$ with $p=p(A)$,
then $\soc^p(M(A))=M(A')$ and $M(A'')\cong M(A)/M(A')$. 
%Via the index set $\ttz_\vartri^+(n)$ of $\Rep^0\Delta$, we give the following algorithm.

\begin{defn}\label{A'A''}
For $A\in\ttz_\vartri^{+}(n)$ with $p=p(A)$,  define the {\it distinguished pair} $(A',A'')$ as follows.
\begin{enumerate}
\item The matrix $A'=(a'_{i,j})$, called the {\sf strongly periodic part} of $A$,  is obtained by setting
\begin{equation*}a'_{i,j}=
  \begin{cases}
    a_{i,j},&\text{if}~j<i+p,\\
    \sum\limits_{i_0\leqs i}a_{i_0,j},   &\text{if}~j=i+p.
  \end{cases}
\end{equation*}
In other words, $A'$ is the matrix obtained by replacing the ``$p^\th$-diagonal" $(a_{i,i+p})_{i\in \bbz}$ by $\col(B)$, where $B$ is the matrix obtained from $A$ by vanishing all the entries below the $p^\th$-diagonal.
\item The matrix $A''=(a''_{i,j})$, called the {\sf aperiodic part},  is obtained by setting
$$a''_{i,j}=a_{i,j+p}.$$
\end{enumerate}

\end{defn}

First, based on the structure of $\soc^t M(A)$ for strongly aperiodic $A\in\ttz_\vartri^{+}(n),t\in\bbn$, we give a matrix algorithm of \cite[Lemma 4.2]{DengDuXiao2007generic} as follows.

\begin{algo}[for the strongly periodic part]\label{algorithm for strongly periodic}
Suppose $A'$ is strongly periodic. Then $p=p(A')=\ell(A')$ and the algorithm runs $p$ steps:
\begin{itemize}
\item[]{\tt put $B=(b_{i,j}):=A'$\\
for $j$ from 1 to $p$ do\\
$T:=\sum_{i=1}^nb_{i,i+p-j+1}E_{i,i+p-j+1}$, $B:=B-T+\wit T^+$, $\bm a_j=\row(T)$ enddo\\
output \vspace{-2ex}
$$w_{A'}=\bm a_1\bm a_2\cdots\bm a_p.$$}
\end{itemize}
\end{algo}

\begin{rmk}
Every $\bm a_i$ is sincere and is uniquely determined by $A$. For $\lz=(\lz_i)_{i\in\bbz}\in\bbn_\vartri^n$, set $\lz^{[1]}=(\lz_i^{[1]})_{i\in\bbz}$, where $\lz_i^{[1]}=\lz_{i-1}$ for all $i\in\bbz$. It is easy to prove that there is one to one correspondence between strongly periodic matrix $A$ with $\ell(A)=p$ and a sincere sequence $\bm a_1\bm a_2\cdots\bm a_p$ with $\bm a_i^{[1]}\leqs \bm a_{i+1}$, for $1\leqs i\leqs p-1$.
\end{rmk}

Second, for $B=(b_{i,j})\in\ttz_\vartri^{ap}(n)$ and $i\in I$, we set $M(B)=\oplus_{i\in I}M_i(B)$ and $M_i(B)=\oplus_{j>i}b_{i,j}S_i[j-i]$. We take the maximal index in every step in \cite[Prop.~4.3]{DengDuXiao2007generic}, then we give the following matrix algorithm.

\begin{algo}[Aperiodic part]\label{algorithm for aperiodic part}
Suppose $A''$ is aperiodic with $l=\ell(A'')$, consider
{\tt
\begin{itemize}
\item[] put $B=(b_{i,j}):=A''$; for $i$ from 1 to $l$, do
\begin{itemize}\item[] if the $(l-i+1)th$ diagonal $b_{1,1+l-i+1},b_{2,2+l-i+1},\ldots,b_{n,n+l-i+1}$ is nonzero, choose the right most $b_{j,j+l-i+1}\neq0$ such that $b_{j+1,j+1+l-i+1}\neq0$; choose the minimal $j'\leq l-i+1$ such that $b_{j,j+j'}\neq0$ and $j'>\ell(M_{j+1}(B))$; do
$$T:=\sum_{k=j'}^{l-i+1}b_{j,j+k}E_{j,j+k},\quad B:=B-T+\wit T^+,\quad e_{i,j}:=\sum_{k=j'}^{l-i+1}b_{j,j+k},\quad \bm x_{i,j}=j^{e_{i,j}};$$ 
enddo; loop until the $(l-i+1)th$ diagonal is zero.
\end{itemize}
\item[]next $i$; enddo;\\
output \vspace{-2ex}
$$w_{A''}=\bm x_{1,j_1}\cdots\bm x_{1,j_a}\cdots\bm x_{l,k_1}\cdots\bm x_{l,k_b}$$
\end{itemize}}
\end{algo}
The two algorithms give a a {\em distinguished section}
\begin{equation}\label{dist sec}
\mathscr W(n)=\{w_A=w_{A''}w_{A'}\in\wp^{-1}(A)\cap\wit{\ssz}\mid A\in\ttz_\vartri^+(n)\}.
\end{equation}
When restricting to $\ttz_\vartri^{ap}(n)$, we obtain a distinguished section of $\ssz$ over $\ttz_\vartri^{ap}(n)$.

We explain the algorithms by the following example. Recall that every matrix in $\Theta_\vartri^{+}(n)$ is identified as its core. Sometimes, we indicate the diagonal with boldface entries for clarity.

\begin{exam}
  Suppose $n=3$ and $A=\begin{pmatrix}
    \bold 0 & 1 & 1 & 0 & 3 & 1 & 2 & 1 & 3\\
    0 & \bold0 & 0 & 2 & 3 & 1 & 0 & 1 & 1\\
    0 & 0 & \bold0 & 3 & 0 & 1 & 1 & 1 & 0\\
  \end{pmatrix}$,
then $p(A)=4,\ell(A)=8$ and
$$ A'=\begin{pmatrix}
    \bold0 & 1 & 1 & 0 & 6 & 0 & 0 \\
    0 & \bold0 & 0 & 2 & 3 & 6 & 0 \\
    0 & 0 & \bold0 & 3  & 0 & 1 & 3  \\
  \end{pmatrix},\quad A''=\begin{pmatrix}
    \bold0 & 1 & 2 & 1 & 3\\
    0 & \bold0 & 0 & 1 & 1\\
    0 & 0 & \bold0 & 1 & 0\\
  \end{pmatrix},$$
with $\ell(A')=\ell(A'')=4$.

Apply Algorithm \ref{algorithm for strongly periodic} to $A'$ gives
$$i=1:\quad T=\begin{pmatrix}
    \bold0 & 0 & 0 & 0 & 6 & 0 & 0 \\
    0 & \bold0 & 0 & 0 & 0 & 6 & 0 \\
    0 & 0 & \bold0 & 0 & 0 & 0 & 3  \\
  \end{pmatrix},\quad B=\begin{pmatrix}
    \bold0 & 1 & 1 & 3 & 0 & 0  \\
    0 & \bold0 & 0 & 2 & 9 & 0  \\
    0 & 0 & \bold0 & 3  & 0 & 7   \\
  \end{pmatrix},\quad \bm a_1=(6,6,3).$$
$$i=2:\quad T=\begin{pmatrix}
    \bold0 & 0 & 0 & 3 & 0 & 0  \\
    0 & \bold0 & 0 & 0 & 9 & 0  \\
    0 & 0 & \bold0 & 0 & 0 & 7   \\
  \end{pmatrix},\quad B=\begin{pmatrix}
    \bold0 & 1 & 8 & 0 & 0  \\
    0 & \bold0 & 0 & 5 & 0   \\
    0 & 0 & \bold0 & 3  & 9   \\
  \end{pmatrix},\quad\bm a_2=(3,9,7).$$
$$i=3:\quad T=\begin{pmatrix}
    \bold0 & 0 & 8 & 0 & 0  \\
    0 & \bold0 & 0 & 5 & 0  \\
    0 & 0 & \bold0 & 0 & 9   \\
  \end{pmatrix},\quad B=\begin{pmatrix}
    \bold0 & 10 & 0 & 0  \\
    0 & \bold0 & 8 & 0   \\
    0 & 0 & \bold0 & 8   \\
  \end{pmatrix},\quad \bm a_3=(8,5,9).$$
$$i=4: T=\begin{pmatrix}
    \bold0 & 10 & 0 & 0 \\
    0 & \bold0 & 8 & 0  \\
    0 & 0 & \bold0 & 8   \\
  \end{pmatrix},\quad B=0,\quad\bm a_4=(10,8,8).$$
The algorithm stops with the output $w_{A'}=\bm a_1\bm a_2\bm a_3\bm a_4.$

Applying Algorithm \ref{algorithm for aperiodic part} to $A''$ gives
$$i=1:\quad T=\begin{pmatrix}
    \bold0 & 0 & 0 & 0 & 3\\
    0 & \bold0 & 0 & 0 & 0\\
    0 & 0 & \bold0 & 0 & 0\\
  \end{pmatrix}, \quad B=\begin{pmatrix}
    \bold0 & 1 & 2 & 1 & 0\\
    0 & \bold0 & 0 & 1 & 4\\
    0 & 0 & \bold0 & 1 & 0\\
  \end{pmatrix},\quad \bm x_{1,1}=1^3.$$
$$i=2:\quad T=\begin{pmatrix}
    \bold0 & 0 & 0 & 0 & 0\\
    0 & \bold0 & 0 & 1 & 4\\
    0 & 0 & \bold0 & 0 & 0\\
  \end{pmatrix}, \quad B=\begin{pmatrix}
    \bold0 & 1 & 2 & 1 & 0\\
    0 & \bold0 & 0 & 0 & 0\\
    0 & 0 & \bold0 & 2 & 4\\
  \end{pmatrix},\quad \bm x_{2,2}=2^5.$$
$$\qquad\;T=\begin{pmatrix}
    \bold0 & 1 & 2 & 1 \\
    0 & \bold0 & 0 & 0 \\
    0 & 0 & \bold0 & 0 \\
  \end{pmatrix}, \quad A=\begin{pmatrix}
    \bold0 & 0 & 0 & 0 & 0\\
    0 & \bold0 & 2 & 1 & 0\\
    0 & 0 & \bold0 & 2 & 4\\
  \end{pmatrix},\quad \bm x_{2,1}=1^4.$$
$$i=3:\quad T=\begin{pmatrix}
    \bold0 & 0 & 0 & 0 \\
    0 & \bold0 & 0 & 0 \\
    0 & 0 & \bold2 & 4 \\
  \end{pmatrix}, \quad B=\begin{pmatrix}
    \bold0 & 4 & 0 & 0 & 0\\
    0 & \bold0 & 2 & 1 & 0\\
    0 & 0 & \bold0 & 0 & 0\\
  \end{pmatrix},\quad \bm x_{3,3}=3^6.$$
$$\qquad T=\begin{pmatrix}
    \bold0 & 0 & 0 & 0 \\
    0 & \bold0 & 2 & 1 \\
    0 & 0 & \bold0 & 0 \\
  \end{pmatrix}, \quad B=\begin{pmatrix}
    \bold0 & 4 & 0 & 0 \\
    0 & \bold0 & 0 & 0 \\
    0 & 0 & \bold0 & 1 \\
  \end{pmatrix},\quad \bm x_{3,2}=2^3.$$
$$i=4: T=\begin{pmatrix}
    \bold0 & 0 & 0 & 0 \\
    0 & \bold0 & 0 & 0 \\
    0 & 0 & \bold0 & 1 \\
  \end{pmatrix}, \quad B=\begin{pmatrix}
    \bold0 & 4 & 0 & 0 \\
    0 & \bold0 & 0 & 0 \\
    0 & 0 & \bold0 & 0 \\
  \end{pmatrix},\quad \bm x_{4,3}=3^1.$$
  $$\quad T=\begin{pmatrix}
    \bold0 & 4 & 0 & 0 \\
    0 & \bold0 & 0 & 0 \\
    0 & 0 & \bold0 & 0 \\
  \end{pmatrix}, \quad B=\begin{pmatrix}
    \bold0 & 0 & 0 & 0 \\
    0 & \bold0 & 0 & 0 \\
    0 & 0 & \bold0 & 0 \\
  \end{pmatrix},\quad \bm x_{4,1}=1^4.$$
The algorithm has output $w_{A''}=\bm x_{1,1}\bm x_{2,2}\bm x_{2,1} \bm x_{3,3} \bm x_{3,2}\bm x_{4,3}\bm x_{4,1}$.
Thus, it produces the following distinguished word associated to $A$ 
$$w_A=w_{A''}w_{A'}=\bm x_{1,1}\bm x_{2,2}\bm x_{2,1} \bm x_{3,3} \bm x_{3,2}\bm x_{4,3}\bm x_{4,1}\bm a_1\bm a_2\bm a_3\bm a_4.$$ 
\end{exam}
%\begin{rmk}
%  It is well known there is a surjective map from double Ringel-Hall algebras to affine quantum Schur algebra $\zeta_r:\fkD_\vartri(n)\ra\bm\cs_\vartri(n,r)$, which induces a surjective map $\zeta_r:\fkH_\vartri(n)\ra\cs^+_\vartri(n,r)_\cz$(For details, see \cite{DengDuFu2012double}). In \cite{LL}, Lai and Luo give a matrix algorithm for $\cs_\vartri(n,r)_\cz$. Easy to check, when $A\in\ttz_\vartri^{ap}(n)$ and $\row(A)$ is sincere, the monomial basis of $\cs^+_\vartri(n,r)_\cz$ constructed by their algorithm cannot be lifted to that of Ringel-Hall algebra $\fkH_\vartri(n)$.
%\end{rmk}
For a fixed  $A\in\ttz_\vartri^{+}(n)$, let 
\begin{equation}\label{poset ideal}
\Theta_A=(0,A]:=\{B\in\ttz_\vartri^{+}(n)\mid B\leqs_{\dg} A\} \text{ and }\Theta_{\prec A}=\{B\in\ttz_A\mid B\prec A\}.\end{equation}

The proof of Theorem \ref{Hall polynomials} shows that every $\varphi_{w_B,C}^{A}$ is divisible by $\varphi_{w_B}^{B}$. Let 
$\gamma_{w_B,C}^{A}=\varphi_{w_B,C}^{A}/\varphi_{w_B}^B$. The following result shows that the Hall polynomials
$\varphi_{B,C}^A$ can be computed by a recursive formula.
\begin{cor}\label{recursive formula}
For any $A,B,C\in\ttz_\vartri^+(n)$, let $w_B$ be the distinguished obtained by applying  Algorithms \ref{algorithm for strongly periodic} and
\ref{algorithm for aperiodic part} to $B$ and, for any $B'\leqs_{\dg}B$, let $\gamma^{B'}_{w_B}$ and $\gamma_{w_B,C}^A$ be obtained by the multiplication formula given in Theorem \ref{main-multip-theorem}. Then the Hall polynomial $\varphi_{B,C}^{A}$ can be computed by the recursive formula
$$\varphi_{B,C}^{A}=\begin{cases}
\gamma_{w_B,C}^A-\sum_{B':B'\prec B}\gamma^{B'}_{w_B}\varphi_{B',C}^{A},&\text{ if }A\in{\displaystyle \cup_{B'\prec B}\ttz_{\prec B'*C}};\\
\gamma_{w_B,C}^{A},&\text{ if }A\in\ttz_{B*C}\backslash\cup_{B'\prec B}\ttz_{\prec B'*C}.
\end{cases}$$
\end{cor}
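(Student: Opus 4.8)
\emph{Overall approach.} The statement is in essence a reprise of the proof of Theorem~\ref{Hall polynomials} with the word $w$ specialised to the distinguished word $w_B=w_{B''}w_{B'}$ produced by Algorithms~\ref{algorithm for strongly periodic} and~\ref{algorithm for aperiodic part}. First, as in that proof, repeated use of Theorem~\ref{main-multip-theorem} on $u_{w_B,q}=u_{\bm b_1,q}\diamond\cdots\diamond u_{\bm b_m,q}$ gives, for every prime power $q$,
$$u_{w_B,q}=\sum_{B'\preccurlyeq B}\varphi_{w_B}^{B'}(q)\,u_{B',q}\qquad(\varphi_{w_B}^{B'}\in\bbz[\bm q],\ \wp(w_B)=B).$$
Since multiplication in $\fkH_\vartri^\diamond(n,q)$ preserves dimension vectors, every $B'$ here has $\bdim M(B')=\bdim M(B)$, so $B'\leqs_{\dg}B$ by \eqref{dg order}. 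As $w_B$ is distinguished, $\varphi_{w_B}^{B}=\prod_i[\![e_i]\!]^!$, and by \cite[Lem.~5.1]{DengDuXiao2007generic} (used already in the proof of Theorem~\ref{Hall polynomials}) the quotients $\gamma_{w_B}^{B'}:=\varphi_{w_B}^{B'}/\varphi_{w_B}^{B}$ lie in $\bbz[\bm q]$.

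\emph{Deriving the recursion.} Right-multiplying by $u_{C,q}$, I would compute $u_{w_B,q}\diamond u_{C,q}$ in two ways. Theorem~\ref{main-multip-theorem}, applied from the inside out to $u_{\bm b_1,q}\diamond\cdots\diamond u_{\bm b_m,q}\diamond u_{C,q}$, gives $u_{w_B,q}\diamond u_{C,q}=\sum_A\varphi_{w_B,C}^A(q)\,u_{A,q}$ with $\varphi_{w_B,C}^A\in\bbz[\bm q]$; while the displayed expansion together with Theorem~\ref{Hall polynomials} gives
$$u_{w_B,q}\diamond u_{C,q}=\varphi_{w_B}^{B}(q)\,(u_{B,q}\diamond u_{C,q})+\sum_{B'\prec B}\varphi_{w_B}^{B'}(q)\sum_A\varphi_{B',C}^A(q)\,u_{A,q}.$$
Comparing coefficients of $u_{A,q}$ yields $\varphi_{w_B,C}^A(q)=\varphi_{w_B}^{B}(q)\bigl(\varphi_{B,C}^A(q)+\sum_{B'\prec B}\gamma_{w_B}^{B'}(q)\varphi_{B',C}^A(q)\bigr)$ for all $q$; as $\bbz[\bm q]$ is a domain and $\varphi_{w_B}^{B}\neq0$, this is a polynomial identity, so $\gamma_{w_B,C}^A:=\varphi_{w_B,C}^A/\varphi_{w_B}^{B}$ is a well-defined element of $\bbz[\bm q]$ and, after rearranging,
$$\varphi_{B,C}^A=\gamma_{w_B,C}^A-\sum_{B'\prec B}\gamma_{w_B}^{B'}\,\varphi_{B',C}^A,$$
which is the first branch. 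Since each $B'$ in the sum satisfies $B'\leqs_{\dg}B$ with $B'\neq B$, this determines $\varphi_{B,C}^A$ by downward induction along the finite poset $(0,B]$ (see \eqref{poset ideal}), with base case $M(B)$ semisimple, where $u_{w_B,q}=\varphi_{w_B}^{B}(q)u_{B,q}$ has no lower terms and $\varphi_{B,C}^A=\gamma_{w_B,C}^A$ outright.

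\emph{The second branch.} To collapse the correction sum I would use the support of the Hall polynomials it contains: a nonzero $\fkh_{M(B'),M(C)}^{M(A)}$ forces $M(A)$ to be an extension of $M(B')$ by $M(C)$, hence a degeneration of the generic extension $M(B')*M(C)$, so $\varphi_{B',C}^A=0$ unless $A\in\ttz_{B'*C}$. As all $B'$ in the sum have $\bdim M(B')=\bdim M(B)$, we get $\bdim M(B'*C)=\bdim M(B*C)$, so whenever $A\in\ttz_{B*C}$ lies outside $\bigcup_{B'\prec B}\ttz_{\prec B'*C}$ every term of the correction sum is zero and $\varphi_{B,C}^A=\gamma_{w_B,C}^A$, as asserted.

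\emph{Expected main obstacle.} The delicate point is precisely this last collapse at the boundary $A=B'*C$: there $\fkh_{M(B'),M(C)}^{M(B')*M(C)}$ is in general nonzero, so one must check that such $A$ have already been removed by the excluded set $\bigcup_{B'\prec B}\ttz_{\prec B'*C}$ --- equivalently, that $B'\prec B$ forces $B'*C$ \emph{strictly} below $B*C$ in $\leqs_{\dg}$, so that $B'*C\in\ttz_{\prec B''*C}$ for a suitable $B''$ --- and otherwise to revert to the first-branch formula at those $A$. The tools for this bookkeeping are the monotonicity of the generic extension under the degeneration order (\cite{Reineke2001generic}) and the interval structure of $(\ttz_\vartri^+(n),\leqs_{\dg})$ from \eqref{dg order}; granting it, the result follows immediately from the coefficient comparison above, everything else being a line-by-line repetition of the proof of Theorem~\ref{Hall polynomials}.
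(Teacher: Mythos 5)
Your derivation is essentially the paper's own argument: the corollary carries no separate proof and is presented as an immediate consequence of the identity
$\fkh_{w}^{B}u_B\diamond u_C=\sum_{A}\fkh_{w,C}^{A}u_{A}-\sum_{B'\prec B}\fkh_{w}^{B'}\fkh_{B',C}^{A}u_{A}$
obtained at the end of the proof of Theorem \ref{Hall polynomials}, divided through by $\varphi_{w_B}^{B}$ and run by downward induction on $(0,B]$ --- which is exactly your coefficient comparison. The one ``obstacle'' you flag in the second branch is real, but it is an imprecision already present in the paper's bookkeeping rather than a gap peculiar to your argument: since $\fkh_{B',C}^{B'*C}\neq0$, the correct support condition for a surviving correction term is $A\preccurlyeq B'*C$, so the case split should be taken with respect to $\cup_{B'\prec B}\ttz_{B'*C}$ rather than $\cup_{B'\prec B}\ttz_{\prec B'*C}$ (equivalently, one reverts to the first-branch formula at the boundary points $A=B'*C$, as you suggest); with that adjustment your proof is complete and coincides with the intended one.
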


\section{Ringel--Hall algebras, quantum affine $\mathfrak{gl}_n$ and their canonical bases}

%there is a polynomial $\vz_{B_1,B_2,\cdots,B_m}^A\in \bbz[\bm q]$ in $\bm q$, called the {\em Hall polynomials},\footnote{We will provide a new proof for this fact in Theorem~\ref{Hall polynomials}.} such that for a finite field $k$ of $q_k$ elements, 
%$$\vz_{B_1,B_2,\cdots,B_m}^A|_{\bm q=q_k}=\fkh_{M_k(B_1),M_k(B_2),\cdots,M_k(B_m)}^{M_k(A)}.$$

The generic Hall algebra $\fkH_{\vartri}^\diamond(n)$ of $\ddz(n)$ is by definition the free $\bbz[{\bm q}]$-module with basis $\{u_A:=u_{[M(A)]}\mid A\in \ttz_{\vartri}^+(n)\}$ and multiplication given by
\begin{equation*}
  u_{B}\diamond u_{C}=\sum_{A\in\ttz_{\vartri}^+(n)}\vz^A_{B,C}u_A.
\end{equation*}
For a finite field $k$ of $q$ elements, by specializing $\bm q$ to $q$, we obtain the integral Hall algebra $\fkH_\vartri^\diamond(n,q)$ associated with $\Rep^0\ddz(n)$ discussed in \S\S2-4.

C.M. Ringel \cite{RInv, RRev} further twisted the multiplication, using the Euler form, to obtain the Ringel--Hall algebra which connects to the corresponding quantum group.

For ${\bf a}=(a_i)\in\bbz_\vartri^n$ and ${\bf b}=(b_i)\in\bbz_\vartri^n$, the Euler form associated
with the cyclic quiver $\ddz(n)$ is the bilinear form $\lan-,-\ran:\bbz_\vartri^n\times\bbz_\vartri^n\lra\bbz$ defined by
\begin{equation*}
  \lan {\bf a,b}\ran=\sum_{i\in I}a_ib_i-\sum_{i\in I}a_ib_{i+1}.
\end{equation*}

The (generic) Ringel-Hall algebra $\fkH_{\vartri}(n)$ of $\ddz(n)$ is by definition the algebra over $\cz=\bbz[v,v^{-1}]$ $(v^2=\bm q)$ with basis $\{u_A=u_{[M(A)]}\mid A\in \ttz_{\vartri}^+(n)\}$ and the multiplication is twisted by the Euler form:
\begin{equation*}
  u_{B}u_{C}=v^{\lan{\bf dim} M(B),{\bf dim} M(C)\ran}\sum_{A\in\ttz_{\vartri}^+(n)}\vz^A_{B,C}u_A.
\end{equation*}
It is well known that for two $A,B\in\ttz_\vartri^+(n)$, there holds
$$\lan{\bf dim} M(A),{\bf dim} M(B)\ran=\dim_k\Hom(M(A),M(B))-\dim_k\Ext^1_k(M(A),M(B)).$$

The $\cz$-subalgebra $\mathfrak{C}_\vartri(n)$ of $\fkH_\vartri(n)$ generated by $u_i^{(m)}=\dfrac{u_i^m}{[m]!},i\in I$ and $m\geqs 1$, is called the {\em (generic) composition subalgebra}. Then $\fkC_\vartri(n)$ is also generated by $u_{[mS_i]}$ since $u_i^{(m)}=v^{m(m-1)}u_{[mS_i]}$. Clearly, $\fkH_\vartri(n)\text{and}~\fkC_\vartri(n)$ admit natural $\bbn^n$-grading by dimension vectors:
$$\fkH_\vartri(n)=\bps_{{\bf d}\in\bbn^n}\fkH_\vartri(n)_{\bf d}\quad\text{and}\quad\fkC_\vartri(n)=\bps_{{\bf d}\in\bbn^n}\fkC_\vartri(n)_{\bf d}$$
where $\fkH_\vartri(n)_{\bf d}$ is spanned by all $u_A$ with ${\bf dim} M(A)={\bf d}$ and $\fkC_\vartri(n)_{\bf d}=\fkC_\vartri(n)\cap \fkH_\vartri(n)_{\bf d}$.

Base change gives the $\bbq(v)$-algebra ${\bm\fkH}_\vartri(n)=\fkH_\vartri(n)\otm_\cz\bbq(v)$ and ${\bm\fkC}_\vartri(n)=\fkC_\vartri(n)\otm_{\cz}\bbq(v)$.
Denote by ${\bm\fkH}^-_\vartri(n)$ the opposite algebra of ${\bm\fkH}^+_\vartri(n)$ ($={\bm\fkH}_\vartri(n)$). %and, sometimes for the sake of clarity, write $u^+_A\in {\bm\fkH}^+_\vartri(n)$ and $u^-_A\in {\bm\fkH}^-_\vartri(n)$.

By extending $\bm\fkH_\vartri(n)$ to Hopf algebras
\begin{equation*}
  \bm\fkH_\vartri(n)^{\geqs0}={\bm\fkH}^+_\vartri(n)\otm\bbq(v)[K_1^{\pm1},\cdots,K_n^{\pm1}]~
  \text{and}~\bm\fkH_\vartri(n)^{\leqs0}=\bbq(v)[K_1^{\pm1},\cdots,K_n^{\pm1}]\otm {\bm\fkH}^-_\vartri(n),
\end{equation*}
we define the double Ringel-Hall algebra $\fkD_\vartri(n)$ (cf. \cite{Xiao1997drinfeld} \& \cite{DengDuFu2012double})
to be a quotient algebra of the free product $\bm\fkH_\vartri(n)^{\geqs0}*\bm\fkH_\vartri(n)^{\leqs0}$ via a certain skew Hopf paring $\psi:\bm\fkH_\vartri(n)^{\geqs0}\times\bm\fkH_\vartri(n)^{\leqs0}\ra\bbq(v)$. In particular, there is a triangular decomposition
$$\fkD_\vartri(n)=\fkD_{\vartri}^+(n)\otm \fkD_\vartri^0(n)\otm \fkD_\vartri^-(n),$$
where $\fkD_\vartri^+(n)\cong {\bm\fkH}^+_\vartri(n),\fkD_\vartri^0(n)\cong \bbq[K_1^{\pm1},\cdots,K_n^{\pm1}]$ and $\fkD_\vartri^-(n)\cong{\bm\fkH}^-_\vartri(n)$.

\begin{thm}[{\cite[Th. 2.5.3]{DengDuFu2012double}}]
  Let $\U_v(\wih{\fkg\fkl}_n)$ be the quantum loop algebra of $\fkg\fkl_n$ defined in \cite{Drinfeld1988new} or \cite[\S2.5]{DengDuFu2012double}. Then there is a Hopf algebra isomorphism $\fkD_\vartri(n)\cong \U_v(\wih{\fkg\fkl}_n)$.
\end{thm}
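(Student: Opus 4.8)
The plan is to construct an explicit algebra homomorphism $\Phi\colon\U_v(\wih{\fkg\fkl}_n)\to\fkD_\vartri(n)$ from Drinfeld's loop presentation of $\U_v(\wih{\fkg\fkl}_n)$ and then to promote it to a Hopf algebra isomorphism, organising everything around the two triangular decompositions: $\fkD_\vartri(n)=\fkD_\vartri^+(n)\otm\fkD_\vartri^0(n)\otm\fkD_\vartri^-(n)$ against the triangular decomposition $\U_v(\wih{\fkg\fkl}_n)=\U^+\otm\U^0\otm\U^-$ of the loop algebra. On the positive half the point of departure is the Ringel--Green theorem that the composition subalgebra ${\bm\fkC}_\vartri(n)\subseteq{\bm\fkH}_\vartri(n)$, generated by the $u_{[mS_i]}$ ($i\in I$, $m\geqs1$), realises $\U_v^+(\wih{\mathfrak{sl}}_n)$. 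The full Ringel--Hall algebra ${\bm\fkH}_\vartri(n)$ is strictly larger, and the discrepancy is controlled by the Schiffmann--Hubery central elements: there exist elements $z_1,z_2,\dots$ central in ${\bm\fkH}_\vartri(n)$, one in each imaginary degree $s\delta$, with ${\bm\fkH}_\vartri(n)\cong{\bm\fkC}_\vartri(n)\otm\bbq(v)[z_1,z_2,\dots]$ as $\bbq(v)$-spaces. Dually, $\U_v^+(\wih{\fkg\fkl}_n)$ decomposes as $\U_v^+(\wih{\mathfrak{sl}}_n)$ tensored with the polynomial algebra on the positive imaginary Drinfeld generators for the extra loop direction (the positive oscillator modes of the $\fkg\fkl_1$-loop). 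So a first key step is to match these two polynomial factors, i.e.\ to check that $\Phi$ carries a suitable normalisation of those imaginary generators onto the $z_s$; this uses Hubery's closed expression for the $z_s$ in terms of the $u_{[mS_i]}$ and the homogeneous indecomposables.

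Next I would fix a presentation of $\U_v(\wih{\fkg\fkl}_n)$ and define $\Phi$ on generators: the Chevalley-type generators are sent to the semisimple generators $u_i$ ($i\in I$), their counterparts in the opposite algebra ${\bm\fkH}^-_\vartri(n)$, and the torus elements $K_i^{\pm1}$; the higher Drinfeld loop generators $x_{i,s}^{\pm}$ and the imaginary generators $h_{i,s}$ are defined via Beck-type braid-group formulas as iterated $q$-brackets of the $u_i$'s with the $z$'s. One then verifies that all defining relations hold in $\fkD_\vartri(n)$. The relations internal to $\U^+$ (resp.\ $\U^-$) reduce, after the identification of the polynomial factors, to relations already valid inside ${\bm\fkC}_\vartri(n)$ together with the centrality of the $z_s$ in ${\bm\fkH}^\pm_\vartri(n)$; the relations among the $h_{i,s}$ and $K_i$ take place in the commutative Cartan part enlarged by the $z_s$; and the cross relations $[x_{i,s}^+,x_{j,r}^-]$ and $[h_{i,s},x_{j,r}^{\pm}]$ are verified using Green's comultiplication formula and the skew-Hopf pairing $\psi$ used to construct the double $\fkD_\vartri(n)$. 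I expect this verification to be the main obstacle: establishing the mixed real/imaginary relations and the $\pm$ cross relations is the technical heart of the theorem, and doing it cleanly requires either transporting Beck's braid-group action to the Hall side or a careful bookkeeping of $\psi$ on the homogeneous (dimension-vector together with loop) degrees.

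For the isomorphism statement, surjectivity of $\Phi$ is then straightforward: its image contains $u_i$, the negative generators and the $K_i^{\pm1}$, hence ${\bm\fkC}_\vartri(n)^{\pm}$ and $\fkD_\vartri^0(n)$, and it contains the central $z_s$, hence all of ${\bm\fkH}^{\pm}_\vartri(n)$; thus $\Phi$ meets all three triangular factors and is onto. Injectivity follows from a PBW-basis comparison compatible with the triangular decompositions: Beck's PBW basis of $\U_v(\wih{\fkg\fkl}_n)$ on one side, and on the other the basis of $\fkD_\vartri(n)$ assembled from the Hall basis $\{u_A\mid A\in\ttz_\vartri^+(n)\}$ of $\fkD_\vartri^{\pm}(n)$ together with the Laurent monomials in the $K_i$; comparing graded dimensions (by dimension vector together with the imaginary grading) on each homogeneous component shows that the surjection $\Phi$ is bijective. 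Finally, the Hopf-algebra compatibility is checked on the generators: $\Phi$ intertwines the comultiplications, counits and antipodes on the $u_i$, the negative generators and the $K_i^{\pm1}$, and since both algebras are generated by these while the structure maps are algebra (anti)homomorphisms, $\Phi$ is automatically a Hopf isomorphism; equivalently, $\fkD_\vartri(n)$ is by construction a Drinfeld double and $\Phi$ respects both halves and the pairing $\psi$.
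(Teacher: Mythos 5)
This theorem is not proved in the paper at all: it is imported verbatim from \cite[Th.~2.5.3]{DengDuFu2012double}, so there is no in-paper argument to compare against. Measured instead against the proof in that reference, your outline captures the right strategy: the triangular decompositions on both sides, the Ringel--Green identification of the composition subalgebra with $\U_v^+(\wih{\fks\fkl}_n)$, the Schiffmann--Hubery central elements $z_s$ giving ${\bm\fkH}_\vartri(n)\cong{\bm\fkC}_\vartri(n)\otm\bbq(v)[z_1,z_2,\dots]$, the matching of these with the imaginary loop generators, and a graded/PBW dimension count for injectivity. This is essentially how Deng--Du--Fu proceed.

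That said, be aware of where the real work sits and where your sketch is thinnest. First, you lean on ``a presentation of $\U_v(\wih{\fkg\fkl}_n)$'' and on reducing the relations inside $\U^{\pm}$ to ``relations already valid in ${\bm\fkC}_\vartri(n)$ together with centrality of the $z_s$''; but to make that reduction one needs a generators-and-relations presentation of the whole Ringel--Hall algebra ${\bm\fkH}_\vartri(n)$ (not just of its composition subalgebra), which is itself a nontrivial theorem resting on Hubery's analysis of the center --- it cannot be waved through. Second, the verification of the Drinfeld cross relations $[x_{i,s}^+,x_{j,r}^-]$ and $[h_{i,s},x_{j,r}^{\pm}]$ inside the double is exactly the technical heart; you correctly flag it as ``the main obstacle'' but offer two alternative routes (Beck's braid action versus bookkeeping of the pairing $\psi$) without committing to either, so as written this is a plan rather than a proof. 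Finally, the Hopf compatibility is not quite ``automatic on generators'': one must check that the skew Hopf pairing $\psi$ used to form $\fkD_\vartri(n)$ corresponds to the pairing implicit in Drinfeld's presentation, which is an extra computation. None of these points indicates a wrong approach --- they are precisely the lemmas the cited reference supplies --- but your proposal should be read as a faithful roadmap to that proof rather than a self-contained one.
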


Let $\U=\U(n)=\U_v(\wih{\fks\fkl}_n)$ be the quantum affine $\fks\fkl_n(n\geqs 2)$ over $\bbq(v)$, and let $E_i,F_i,K_i^\pm(i\in I)$ be the generators, for
details see \cite{Lusztig1993introduction,Jantzen1995lectures}. Then $\U$ admits a triangular decomposition $\U=\U^-\U^0\U^+$, where $\U^+$(resp. $\U^-,\U^0$) is the subalgebra generated by the $E_i$ (resp. $F_i$, $K_i^\pm~(i\in I)$). Denote by $U_\cz^+$ the Lusztig integral form of $\U^+$, which is
generated by all the divided powers $E_i^{(m)}=\tfrac{E_i^m}{[m]!}$. The relation of Ringel-Hall algebras and quantum affine $\fks\fkl_n$ is described in the following.

\begin{thm}[{\cite{Ringel1993composition}}]\label{decomp of Hall alg and generators}
There is a $\cz$-algebra isomorphism $$\fkC_\vartri(n)\iso U_\cz^+(n),~u_i^{(m)}\mapsto E_i^{(m)},~i\in I,~m\geqs 1,$$
  and by base change to $\bbq(v)$, there is an algebra isomorphism $\bm\fkC_\vartri(n)\iso \U^+(n)$.
% There is a decomposition of the Ringel-Hall algebra $\bm\fkH_\vartri(n)=\fkC_\vartri(n)\otm_{\bbq(v)}\bbq(v)[c_1,c_2,\cdots]$,
%see \cite{Schiffmann2000hall}. In particular, $\bm\fkH_\vartri(n)$ is generated by $u_i$ and $c_m$ for $i\in I$ and $m\geqs1$. Here the $c_i$ are called Schiffmann-Hubery generators, for detail construction of $c_i$.
\end{thm}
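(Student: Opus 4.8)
\emph{Overview and the homomorphism.} This is Ringel's identification of the composition algebra of the cyclic quiver with the $+$-part of quantum affine $\fks\fkl_n$; the plan is to build the isomorphism generator-by-generator, pin down injectivity via a bilinear-form argument, and then descend to the $\cz$-form. First I would construct a $\bbq(v)$-algebra map $\eta\colon\U^+(n)\to{\bm\fkH}_\vartri(n)$ with $\eta(E_i)=u_i:=u_{[S_i]}$ for $i\in I$. Since $\U^+(n)$ is presented by the $E_i$ subject only to the quantum Serre relations of affine type $A_{n-1}^{(1)}$, it suffices to verify these relations among the $u_i$ inside the twisted Hall algebra. For $i\neq j$ with $|i-j|\neq1$ in $\bbz/n\bbz$ one has $\Ext^1_k(S_i,S_j)=\Ext^1_k(S_j,S_i)=0$ and $\langle e_i,e_j\rangle=\langle e_j,e_i\rangle=0$, so $u_iu_j=u_{[S_i\oplus S_j]}=u_ju_i$ directly from the multiplication rule; for adjacent $i,j$ one needs the cubic Serre relation, a rank-two computation inside the Hall algebra of the extension-closed subcategory generated by $S_i$ and $S_j$ --- the $A_2$ situation when $n\geqs3$ (objects are direct sums of $S_i$, $S_j$, $S_i[2]$) and the $\ddz(2)$, affine $A_1^{(1)}$, situation when $n=2$ --- both covered by Ringel's basic Hall-algebra calculations. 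As ${\bm\fkC}_\vartri(n)$ is by definition generated by the $u_i$ (equivalently by the divided powers $u_i^{(m)}$), the map $\eta$ is surjective onto ${\bm\fkC}_\vartri(n)$.

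\emph{Injectivity.} The crux is to show $\eta$ has trivial kernel. Here I would appeal to Green's theorem: $\fkH_\vartri(n)$ carries a (twisted) bialgebra structure together with a symmetric bilinear form for which comultiplication is adjoint to multiplication, and this form is nondegenerate on each $\bbn^n$-graded component; on the quantum-group side the matching object is Lusztig's bilinear form on $\U^+(n)$, nondegenerate by its very construction. Since $\eta$ is a graded morphism of (twisted) bialgebras intertwining Lusztig's form with the restriction of Green's form, any $x$ with $\eta(x)=0$ pairs trivially with everything and hence lies in the radical of Lusztig's form, i.e.\ $x=0$; thus $\eta$ is an isomorphism onto ${\bm\fkC}_\vartri(n)$. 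Alternatively one may short-circuit this step using the material of the present section: under the double Ringel--Hall realisation $\fkD_\vartri(n)\cong\U_v(\wih{\fkg\fkl}_n)$ recalled above, together with the triangular decomposition giving $\fkD_\vartri^+(n)\cong{\bm\fkH}_\vartri^+(n)$, the positive part of the standard embedding $\U_v(\wih{\fks\fkl}_n)\hookrightarrow\U_v(\wih{\fkg\fkl}_n)$ is carried onto the subalgebra of $\fkD_\vartri^+(n)$ generated by the images of the Chevalley generators, and in the explicit isomorphism of \cite{DengDuFu2012double} those images are exactly the semisimple generators $u_i$, so that subalgebra is ${\bm\fkC}_\vartri(n)$.

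\emph{The integral form and conclusion.} Finally I would descend to $\cz$. By definition $U_\cz^+(n)$ is the $\cz$-subalgebra of $\U^+(n)$ generated by the divided powers $E_i^{(m)}$, while $\fkC_\vartri(n)$ is the $\cz$-subalgebra of $\fkH_\vartri(n)$ generated by the $u_i^{(m)}=u_i^m/[m]!$; since $\eta$ sends $E_i^{(m)}$ to $u_i^{(m)}$, it restricts to a $\cz$-algebra surjection $U_\cz^+(n)\twohra\fkC_\vartri(n)$, and injectivity is inherited from the $\bbq(v)$-level through the inclusions $U_\cz^+(n)\hookrightarrow\U^+(n)$ and $\fkC_\vartri(n)\hookrightarrow{\bm\fkC}_\vartri(n)$. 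Base change then returns ${\bm\fkC}_\vartri(n)\iso\U^+(n)$. I expect the genuine obstacle to be the injectivity step: in the form-theoretic route it rests on Green's nondegeneracy theorem and the identification of Green's form with Lusztig's form on the composition subalgebra --- the one substantive input --- while the double-Hall route packages the same difficulty into the realisation theorem of \cite{DengDuFu2012double}; a smaller but real technical point is the degree-three Serre relation when $n=2$, where the local structure of $\ddz(2)$ is of affine type $A_1^{(1)}$ rather than of type $A$.
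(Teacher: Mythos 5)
The paper does not prove this statement; it is quoted from Ringel's \emph{The composition algebra of a cyclic quiver}, so the only meaningful comparison is with Ringel's argument. Your strategy is the now-standard one and is essentially sound: build $\eta:\U^+(n)\to{\bm\fkH}_\vartri(n)$, $E_i\mapsto u_i$, by verifying the quantum Serre relations inside the Hall algebra, prove injectivity by matching Green's Hopf pairing on $\fkH_\vartri(n)$ with Lusztig's form on $\U^+(n)$ (both nondegenerate, the latter by the construction of $\mathbf f$ and Lusztig's theorem that the Serre ideal equals the radical), and then restrict to the $\cz$-forms, where surjectivity is generator-by-generator and injectivity is inherited. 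Ringel's original 1993 proof for the cyclic quiver predates Green's theorem and runs differently: after the surjection $\U^+\twoheadrightarrow{\bm\fkC}_\vartri(n)$ coming from the Serre relations, he gets the reverse inequality of graded dimensions by a specialization/degeneration argument comparing ${\fkC}_\vartri(n)$ at $v=1$ with $U(\fkn^+)$ of $\wih{\fks\fkl}_n$ and invoking the PBW character formula. Your route buys uniformity across all types at the price of importing Green's comultiplication theorem; Ringel's buys self-containedness for this quiver at the price of a delicate dimension count in affine type.

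Two caveats. First, for $n=2$ the off-diagonal Cartan entries are $-2$, so the relevant relations are the quartic ones $\sum_{s=0}^{3}(-1)^sE_i^{(s)}E_jE_i^{(3-s)}=0$, not a ``degree-three'' relation as you write; you correctly flag that $\ddz(2)$ is of type $A_1^{(1)}$, but the verification is a genuinely different (and longer) Hall-algebra computation than the $A_2$ case. Second, your proposed shortcut through the double Ringel--Hall realisation $\fkD_\vartri(n)\cong\U_v(\wih{\fkg\fkl}_n)$ of \cite{DengDuFu2012double} is circular: the proof of that realisation rests on the Schiffmann--Hubery decomposition of ${\bm\fkH}_\vartri(n)$ as the composition algebra tensored with a central polynomial algebra, which already presupposes Theorem \ref{decomp of Hall alg and generators}. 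Keep the Green's-form argument as the actual proof and drop the alternative, or present it only as a consistency check.
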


%\begin{rmk}\label{three type generators}
%  Besides the above generators $u_i$ and $c_m$, $\fkH_\vartri(n)$ (over $\bbz$) has another two types generators consisting of semisimple modules or  indecomposable modules, respectively. For details, see \cite[Section 1.4]{DengDuFu2012double}.
%\begin{enumerate}[\rm(1)]
%\item Set $\dz=(1,1,\cdots,1)\in\bbn_\vartri^n$, the Ringel-Hall algebra $\bm\fkH_\vartri(n)$ is generated by $u_i$ and $u_{m\dz}$ for $i\in I$ and $m\geqs 1$.
%\item The Ringel-Hall algebra $\bm\fkH_\vartri(n)$ is generated by $u_i$ and $u_{S_{i_l}[ln]}$ for $i\in I,i_l\in\bbz^+$ and $l\geqs1$.
%\end{enumerate}\end{rmk}

We now review an algorithm for computing the canonicl basis. The first ingredient required in the algorithm is the following modified multiplication formulas.

For $A\in\ttz_\vartri^+(n)$, let $\dz(A)=\dim\End(M(A))-\dim M(A)$ and
$$\wit{u}_A=v^{\dz(A)}u_{A}=v^{\dim\End(M(A))-\dim M(A)}u_{A}.$$
%For our purpose, we first modify the multiplication formula given in the middle of \cite[p.~42]{DuFu2015quantum}.
\begin{lem}\label{DuFu}
For $\az\in\bbn_\vartri^n,A\in\ttz_\vartri^+(n)$, the twisted multiplication formula in the Ringel-Hall algebra $\fkH_\vartri(n)$ over $\cz$ is given by
\begin{equation*}
  \widetilde{u}_\az\widetilde{u}_A=\sum_{\stackrel{T\in \Theta_{\vartriangle}^+(n)}{\row(T)=\az}}v^{f_{A,T}}\prod_{\stackrel{1\leqs i\leqs n}{j\in\bbz,j>i}}
  \ol{\left[\!\!\left[\begin{matrix} a_{ij}+t_{ij}-t_{i-1,j}\\ t_{ij} \end{matrix}\right]\!\!\right]}\wit{u}_{A+T-\widetilde{T}^+},
\end{equation*}
where
\begin{equation*}
  f_{A,T}=\sum_{\stackrel{1\leqs i\leqs n}{j\geqs l\geqs i+1}}a_{i,j}t_{i,l}-\sum_{\stackrel{1\leqs i\leqs n}{j>l\geqs i+1}}a_{i+1,j}t_{i,l}
  -\sum_{\stackrel{1\leqs i\leqs n}{j\geqs l\geqs i+1}}t_{i-1,j}t_{i,l}+\sum_{\stackrel{1\leqs i\leqs n}{j>l\geqs i+1}}t_{i,j}t_{i,l}.
\end{equation*}
\end{lem}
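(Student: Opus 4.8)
The plan is to derive Lemma~\ref{DuFu} from Theorem~\ref{main-multip-theorem} by passing to the twisted Ringel--Hall algebra and renormalising. Since the Hall numbers and the coefficients on the right of Theorem~\ref{main-multip-theorem} are polynomials in $q$ (Theorem~\ref{Hall polynomials}) and the identity holds for every prime power $q$, it holds in the generic Hall algebra $\fkH_\vartri^\diamond(n)$ over $\bbz[{\bm q}]$; reading it under $\bm q=v^2$ and then twisting by the Euler form as in the definition of $\fkH_\vartri(n)$ produces, after multiplying the right-hand side by $v^{\lan\az,\bdim M(A)\ran}$ (note $\bdim M(S_\az)=\az$) and replacing each $u_B$ by $v^{-\dz(B)}\wit u_B$,
$$\wit u_\az\wit u_A=\sum_{\stackrel{T\in\ttz_\vartri^+(n)}{\row(T)=\az}}v^{\,e(A,T)}\prod_{\stackrel{1\leqs i\leqs n}{j>i}}\left[\!\!\left[\begin{matrix} a_{ij}+t_{ij}-t_{i-1,j}\\ t_{ij}\end{matrix}\right]\!\!\right]\wit u_{C},\qquad C:=A+T-\wit T^+,$$
with exponent $e(A,T)=\dz(S_\az)+\dz(A)-\dz(C)+\lan\az,\bdim M(A)\ran+2g(A,T)$ and $g(A,T)=\sum_{\stackrel{1\leqs i\leqs n}{i<l<j}}(a_{ij}t_{il}-t_{ij}t_{i+1,l})$. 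Because $\dz(B)+\dim M(B)=\dim\End M(B)$ and $M(C)$ is an extension of $S_\az$ by $M(A)$ (so $\bdim M(C)=\az+\bdim M(A)$, as seen in the proof of Theorem~\ref{main-multip-theorem}), I would rewrite this as $e(A,T)=\lan\az,\bdim M(A)\ran+\dim\End M(S_\az)+\dim\End M(A)-\dim\End M(C)+2g(A,T)$.

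Next I would convert each $\left[\!\!\left[\begin{matrix}N\\ t\end{matrix}\right]\!\!\right]$ into its bar-conjugate: since $\begin{bmatrix}N\\ t\end{bmatrix}=v^{-t(N-t)}\left[\!\!\left[\begin{matrix}N\\ t\end{matrix}\right]\!\!\right]$ is bar-invariant, one has $\left[\!\!\left[\begin{matrix}N\\ t\end{matrix}\right]\!\!\right]=v^{2t(N-t)}\,\ol{\left[\!\!\left[\begin{matrix}N\\ t\end{matrix}\right]\!\!\right]}$, so with $N=a_{ij}+t_{ij}-t_{i-1,j}$ and $t=t_{ij}$ the product acquires a further factor $v^{2\sum_{1\leqs i\leqs n,\,j>i}t_{ij}(a_{ij}-t_{i-1,j})}$. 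The whole statement then reduces to the scalar identity
$$f_{A,T}=e(A,T)+2\sum_{\stackrel{1\leqs i\leqs n}{j>i}}t_{ij}(a_{ij}-t_{i-1,j}).$$
To check it, I would substitute the standard homological data of $\Rep^0\ddz(n)$: the dimension formula for $\Hom$ between the indecomposables $S_i[l]$, which turns $\dim\End M(A)$, $\dim\End M(C)$ and $\dim\End M(S_\az)=\sum_i\az_i^2$ into explicit quadratic forms in the entries of $A$, of $C$, and in $\az$; the relations $\az_i=\sum_{j>i}t_{ij}$ and $\lan\az,\bdim M(A)\ran=\dim\Hom(S_\az,M(A))-\dim\Ext^1(S_\az,M(A))$; and $c_{ij}=a_{ij}+t_{ij}-t_{i-1,j}$, used to expand $\dim\End M(C)-\dim\End M(A)$. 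Expanding and regrouping the resulting sums should reproduce exactly the four sums of $f_{A,T}$.

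The hard part is this last exponent-matching. The difference $\dim\End M(C)-\dim\End M(A)$ unavoidably generates cross terms between the $a_{ij}$'s and the $t_{ij}$'s (the entries of $C$ mix the two), and one has to be scrupulous about index ranges --- in particular the $j\geqs l$ versus $j>l$ distinction that appears in $f_{A,T}$, and the residues $\bmod n$ implicit in the labels of the $S_i[l]$ --- so that all spurious contributions cancel and precisely $f_{A,T}$ survives; this is routine but lengthy bookkeeping. An alternative that bypasses the computation altogether is to invoke the result directly: the formula is \cite[Thm~4.5]{DuFu2015quantum} (see also \cite[Prop.~4.2]{DuFu2015quantum}), where it is obtained inside the affine $q$-Schur algebra.
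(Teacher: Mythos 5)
The paper itself contains no proof of Lemma \ref{DuFu}: the statement is imported from \cite{DuFu2015quantum} (hence the label), where it is obtained inside the affine $q$-Schur algebra. Your fallback of simply citing \cite[Thm~4.5]{DuFu2015quantum} therefore coincides with what the paper does. Your main route --- specializing Theorem \ref{main-multip-theorem} at $\bm q=v^2$, inserting the Euler-form twist from the definition of $\fkH_\vartri(n)$, and renormalizing $u_B=v^{-\dz(B)}\wit u_B$ --- is a genuinely different and more self-contained derivation, and the reduction you carry out is correct: since $\dim M(C)=\dim S_\az+\dim M(A)$ for $C=A+T-\wit T^+$, the exponent does collapse to $e(A,T)=\lan\az,\bdim M(A)\ran+\dim\End(S_\az)+\dim\End M(A)-\dim\End M(C)+2g(A,T)$, and the conversion to bar-conjugated symmetric Gaussian polynomials contributes exactly $2\sum_{i,\,j>i}t_{ij}(a_{ij}-t_{i-1,j})$. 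As a sanity check, for $n=2$, $\az=(a+b,0)$ and $A$ the matrix of $bS_2$, your $e(A,T)$ evaluates to $-(a+b-t)(b-t)$, which is precisely the exponent appearing in the proof of Proposition \ref{slice20}.

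The gap is that the whole content of the lemma now resides in the unproved scalar identity $f_{A,T}=e(A,T)+2\sum_{i,\,j>i}t_{ij}(a_{ij}-t_{i-1,j})$, and you only assert that expanding $\dim\End M(C)-\dim\End M(A)$ via the Hom-dimension formula for the indecomposables $S_i[l]$ ``should'' reproduce the four sums defining $f_{A,T}$. That expansion is exactly where the asymmetric ranges $j\geqs l$ versus $j>l$ and the index shifts $i\mapsto i\pm1$ in $f_{A,T}$ must emerge, together with the congruences mod $n$ in $\dim\Hom(S_i[l],S_j[m])$, and nothing in your write-up pins these down; a sign or range error here would silently produce a wrong (non-bar-compatible) formula. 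As it stands the argument is a correct plan rather than a proof: either display the expansion and regrouping in full, or fall back on the citation as the paper does.
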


For each $w={\bm a}_1{\bm a}_2\cdots{\bm a}_m\in\wit{\ssz}$ with tight form $w={\bm b}_1^{e_1}{\bm b}_2^{e_2}\cdots{\bm b}_t^{e_t}$,
define a monomial associated with $w$ in $\fkH_\vartri(n)$
$$m^{(w)}=\wit{u}_{e_1\bm b_1}\cdots \wit{u}_{e_t\bm b_t}.$$
The monomials associated with the distinguished words $w_A=w_{A''}w_{A'}$ produced by Algorithms \ref{algorithm for strongly periodic} and
\ref{algorithm for aperiodic part} will be denoted simply by
$$
m^{(A)}=m^{(w_A)}=m^{(w_{A''})}m^{(w_{A'})}.
$$
%We also write $$\gz^{A}_B:=\gz^A_{w_B}$$ if the distinguished word  $w_B\in\wp^{-1}(B)$ is produced  by the two algorithms.
We now apply \cite[Thm~6.2]{DengDuXiao2007generic} to this particularly selected monomial set.
\begin{lem}\begin{enumerate}[\rm(1)]
\item For $A\in\ttz_\vartri^+(n)$,  we have a triangular relation
 \begin{equation}\label{triangluarrelation}
  m^{(A)}=\wit{u}_A
  +\sum_{\stackrel{T\prec A,T\in\ttz_\vartri^+(n)}{\bdim M(A)=\bdim M(T)}}v^{\dz(A)-\dz(T)}\gz^T_{w_A}(v^2)\wit{u}_T,
 \end{equation}
In particular, $\fkH_\vartri(n)$ is generated by $\{u_i^{(m)},u_\az=u_{[S_\az]}\mid i\in I,\az\in I_{\text{\rm sin}},m\in\bbn\}$,
where $S_\az=\oplus_{i=1}^n\az_i S_i$ is the semisimple representation of $\ddz(n)$ associated with $\az$.
\item  The set \begin{equation}\label{monomial basis}
\mathscr{M}(\wih{\fkg\fkl}_n)_+=\{m^{(A)}\mid A\in\ttz_\vartri^+(n)\}\quad(\text{resp.}, \mathscr{M}(\wih{\fkg\fkl}_n)_{ap}=\{m^{(A)}\mid A\in\ttz_\vartri^{ap}(n)\})
\end{equation}
forms  a $\cz$-basis for $\fkH_\vartri(n)$ (resp., $U_\cz^+(n)$).
\end{enumerate}
\end{lem}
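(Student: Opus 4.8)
The plan is to deduce the Lemma directly from \cite[Thm~6.2]{DengDuXiao2007generic} applied to the distinguished section $\mathscr W(n)$ produced by Algorithms \ref{algorithm for strongly periodic} and \ref{algorithm for aperiodic part}. First I would recall what that theorem yields: for a distinguished word $w$ with $\wp(w)=A$, the monomial $m^{(w)}$ expands, when written in the PBW-type basis $\{u_T\}$ (or its rescaled version $\{\wit u_T\}$), as $u_A$ plus a $\preceq$-lower-order combination of basis elements $u_T$ with $T\prec A$ and the same dimension vector. Concretely, repeatedly applying the twisted multiplication formula of Lemma \ref{DuFu} to the product $\wit u_{e_1\bm b_1}\cdots\wit u_{e_t\bm b_t}$ defining $m^{(w_A)}$ produces a $\cz$-linear combination $\sum_{T\preceq\wp(w_A)}c_T\,\wit u_T$ over matrices $T$ with $\bdim M(T)=\bdim M(\wp(w_A))$; the coefficient $c_A$ is governed by the leading term of the formula (the top $T$ in each factor), and since $w_A$ is distinguished we have $\gz^{\wp(w_A)}_{w_A}(\bsq)=1$, forcing $c_A=v^{\dz(A)-\dz(A)}\gz^A_{w_A}(v^2)=1$ after dividing through by $\prod[\![e_i]\!]^!$. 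For the lower terms one uses the definition $\wit u_T=v^{\dz(T)}u_T$ together with $\vz^T_{w_A}=\prod[\![e_i]\!]^!\gz^T_{w_A}$ to rewrite the coefficients as $v^{\dz(A)-\dz(T)}\gz^T_{w_A}(v^2)$, which gives exactly \eqref{triangluarrelation}.

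Next, for the ``in particular'' clause of (1), the triangular relation shows that the matrix expressing the monomials $\{m^{(A)}\}$ in terms of $\{\wit u_A\}$ is unitriangular with respect to the partial order $\preceq$ (block-diagonal across dimension vectors), hence invertible over $\cz$. Therefore each $\wit u_A$, and so each $u_A$, lies in the $\cz$-subalgebra generated by the $\wit u_{e_i\bm b_i}$ occurring in the $m^{(A)}$; by construction these are products of $\wit u_{mS_i}$ (equivalently $u_i^{(m)}$ up to a unit, since $u_i^{(m)}=v^{m(m-1)}u_{[mS_i]}$) and $\wit u_\az$ for sincere $\az$. Since $\{u_A\}$ is a $\cz$-basis of $\fkH_\vartri(n)$, these elements generate.

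For part (2), the triangular relation \eqref{triangluarrelation} says that the transition matrix from $\{m^{(A)}\mid A\in\ttz_\vartri^+(n)\}$ to the $\cz$-basis $\{\wit u_A\mid A\in\ttz_\vartri^+(n)\}$ is unitriangular (with respect to any linear refinement of $\preceq$ respecting the $\bbn^n$-grading) and hence lies in $\GL$ over $\cz$; so $\mathscr M(\wih{\fkg\fkl}_n)_+$ is itself a $\cz$-basis of $\fkH_\vartri(n)$. For the aperiodic statement one restricts to $A\in\ttz_\vartri^{ap}(n)$: here the distinguished word $w_A$ lies in $\ssz_I$ (no sincere letters, since $A'$ is trivial), so $m^{(A)}$ is a monomial purely in the $\wit u_{e_iS_i}$, hence in $\bm\fkC_\vartri(n)$, and by Theorem \ref{decomp of Hall alg and generators} in $U_\cz^+(n)$; moreover all matrices $T\prec A$ with $\bdim M(T)=\bdim M(A)$ appearing in \eqref{triangluarrelation} are again aperiodic (aperiodicity is preserved under $\leqs_{\dg}$, equivalently the degeneration order cannot turn an aperiodic module periodic within a fixed dimension vector — this is exactly the content ensuring $\ttz_\vartri^{ap}(n)$ is a $\leqs_{\dg}$-order ideal, cf.\ \cite{DengDuXiao2007generic}). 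Thus the transition matrix restricts to a unitriangular matrix indexed by $\ttz_\vartri^{ap}(n)$, and since $\{u_A\mid A\in\ttz_\vartri^{ap}(n)\}$ is known to be a $\cz$-basis of $U_\cz^+(n)$ (the aperiodic PBW basis, from \cite{DengDuXiao2007generic} or the theory of $\bm\fkC_\vartri(n)$), so is $\mathscr M(\wih{\fkg\fkl}_n)_{ap}$.

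The step I expect to be the main obstacle is verifying cleanly that the distinguished section built from Algorithms \ref{algorithm for strongly periodic}–\ref{algorithm for aperiodic part} genuinely satisfies the hypotheses of \cite[Thm~6.2]{DengDuXiao2007generic} — i.e.\ that the words $w_A$ these algorithms output are distinguished in the precise sense $\gz^{\wp(w_A)}_{w_A}(\bsq)=1$, and that the matching $\wp(w_A)=A$ holds — since the algorithms merely repackage the constructions of \cite[\S4]{DengDuXiao2007generic} in matrix language (Algorithm \ref{algorithm for strongly periodic} implements \cite[Lemma~4.2]{DengDuXiao2007generic} and Algorithm \ref{algorithm for aperiodic part} implements \cite[Prop.~4.3]{DengDuXiao2007generic} with the maximal-index choice). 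Granting that identification, which I would state as an appeal to Proposition \ref{dist words} together with the remark after Algorithm \ref{algorithm for strongly periodic}, the rest is bookkeeping with the order $\preceq$, the rescaling $\wit u_A=v^{\dz(A)}u_A$, and the divisibility $\vz^T_{w_A}=\prod_i[\![e_i]\!]^!\,\gz^T_{w_A}$ established before Theorem \ref{Hall polynomials}.
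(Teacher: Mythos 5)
Your overall route---reading the Lemma off from \cite[Thm~6.2]{DengDuXiao2007generic} applied to the distinguished section produced by Algorithms \ref{algorithm for strongly periodic} and \ref{algorithm for aperiodic part}, with the unitriangularity of the transition matrix and the normalisation $\gz^{\wp(w_A)}_{w_A}=1$ doing the work---is exactly the paper's (which offers nothing beyond that citation), and your treatment of part (1), of the generation statement, and of the basis statement for the full algebra $\fkH_\vartri(n)$ is sound.

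However, your argument for the aperiodic half of part (2) has a genuine gap. You claim that every $T\prec A$ with $\bdim M(T)=\bdim M(A)$ and $A$ aperiodic is again aperiodic (so that $\ttz_\vartri^{ap}(n)$ is a $\leqs_{\dg}$-order ideal), and you invoke ``$\{u_A\mid A\in\ttz_\vartri^{ap}(n)\}$ is a $\cz$-basis of $U_\cz^+(n)$''. Both claims are false. Already for $n=2$ and dimension vector $(1,1)$ the matrix $A$ with $M(A)=S_1[2]$ is aperiodic, but its unique proper degeneration $M(T)=S_1\oplus S_2$ is periodic; accordingly the expansion \eqref{triangluarrelation} of $m^{(A)}=\wit u_{S_1}\wit u_{S_2}$ genuinely contains the periodic term $\wit u_{S_1\oplus S_2}$, so the transition matrix does \emph{not} restrict to one indexed by $\ttz_\vartri^{ap}(n)$. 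Moreover $u_{S_1[2]}$ does not even lie in $\fkC_\vartri(2)$: in degree $(1,1)$ the composition algebra is spanned by $u_1u_2$ and $u_2u_1$, from which one can only isolate $u_{S_1[2]}-u_{S_2[2]}$ modulo $u_{S_1\oplus S_2}$. Hence the aperiodic $\wit u_A$ are not a basis (nor even elements) of $U_\cz^+(n)$, and the unitriangularity-plus-known-basis argument collapses. The correct argument, which is the actual content of \cite[\S\S6--7]{DengDuXiao2007generic}, replaces $\{\wit u_A\}_{A\ \mathrm{aperiodic}}$ by the recursively defined PBW-type elements $E_A=\wit u_A+\sum_{C\prec A}\eta^C_A\wit u_C$ (with $C$ allowed to be periodic) and proves the second triangular relation $m^{(A)}=E_A+\sum_{B\prec A,\,B\ \mathrm{aperiodic}}(\cdots)E_B$ in which the sum \emph{is} restricted to aperiodic indices, together with the fact that $\{E_A\}_{A\ \mathrm{aperiodic}}$ is a $\cz$-basis of $U_\cz^+(n)$; linear independence of the aperiodic monomials then follows from triangularity against $\{\wit u_A\}$, while spanning follows from this second relation, not from any order-ideal property of $\ttz_\vartri^{ap}(n)$.
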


The ingredients to define a canonical basis of an algebra include a basis with index set $P$, a bar involution on the algebra and a poset structure on $P$ which satisfies a certain triangular condition when applying the bar to a basis element. 
In the current case, the basis is the basis $\{\wit{u}_A\mid A\in\ttz_\vartri^{+}(n)\}$, the poset is $(\ttz_\vartri^{+}(n),\leqs_{\dg})$, and the bar involution (see, e.g., \cite[Proposition 7.5]{VaragnoloVasserot1999decomposition}) is given by 
$$^-:\fkH_\vartri(n)\lra \fkH_\vartri(n),\quad m^{(A)}\mapsto m^{(A)}, v\mapsto v^{-1}.$$
We now use the selected monomials $m^{(A)}$ to verify the triangular relation.

 Restricting to $A\in\ttz_\vartri^{+}(n)_{\bf d},~{\bf d}\in\bbn^n_\vartri$, by \eqref{triangluarrelation}
\begin{equation}\label{tri rel}
  m^{(A)}=\wit{u}_A
  +\sum_{B\prec A,B\in\ttz_\vartri^+(n)_{\bf d}}h_{B,A}\wit{u}_B,\quad h_{B,A}=v^{\dz(A)-\dz(T)}\gz^B_{w_A}(v^2).
\end{equation}
Solving above gives
\begin{equation*}
  \wit{u}_A=m^{(A)}+\sum_{B\prec A,B\in\ttz_\vartri^+(n)_{\bf d}}g_{B,A}m^{(B)}.
\end{equation*}
Applying the bar involution, we obtain
$$\ol{\wit{u}_A}=m^{(A)}+\sum_{B\in\ttz_\vartri^{+}(n)_{\bf d},B\prec A}\bar{g}_{B,A}m^{(B)}=\wit{u}_A+\sum_{B\in\ttz_\vartri^{+}(n)_{\bf d},B\prec A}r_{B,A}\wit{u}_B.$$

Now, by \cite[7.10]{Lusztig1990canonical} (or \cite[\S0.5]{DengDuParashallWang2008finite},\cite{Du1994ic}), the system
$$p_{B,A}=\sum_{B\preccurlyeq C\preccurlyeq A}r_{B,C}\bar{p}_{C,A}\quad \text{for}~B\preccurlyeq A, A,B\in\ttz_\vartri^{+}(n)_{\bf d}$$
has a unique solution satisfying $p_{A,A}=1,p_{B,A}\in v^{-1}\bbz[v^{-1}]$ for $B\prec A$. Moreover, the elements
$$\scc_A=\sum_{B\preccurlyeq A,B\in \ttz_\vartri^{ap}(n)}p_{B,A}\wit{u}_B,\quad A\in\ttz_\vartri^{+}(n)_{\bf d},$$
satisfying $\ol{\scc_A}=\scc_A$, form a $\cz$-basis for $\fkH_\vartri(n)_{\bf d}$. The basis 
\begin{equation}\label{CBS}
\mathscr{C}(\wih{\fkg\fkl}_n)_+=\{\scc_A\mid A\in\ttz_\vartri^{+}(n)\}
\end{equation} 
is called the {\it canonical basis} of $\fkH_\vartri(n)$ with respect to the PBW type basis $\{\wit{u}_A\}_{A\in\ttz_\vartri^{+}(n)}$, the bar involution and the poset $(\ttz_\vartri^{+}(n),\leqs_{\dg})$.

In practice, if the relation \eqref{tri rel} can be computed explicitly, then we may follow the following algorithm to compute the $\scc_A$ (or $p_{B,A}$) inductively on the poset ideal $\ttz_A$ defined in \eqref{poset ideal}. Write 
$$
\ttz_{\prec A}=\ttz_{\prec A}^{1}\cup \ttz_{\prec A}^{2}\cup\cdots\cup \ttz_{\prec A}^{t}~\text{for some}~t\in\bbn,
$$
where $\ttz_{\prec A}^{1}=\{\text{maximal elements of}~\ttz_{\prec A}\}$ and
$\ttz_{\prec A}^{i}=\{\text{maximal elements of}~\ttz_{\prec A}\setminus \cup_{j=1}^{i-1}\ttz_{\prec A}^j\}$ for $2\leqs i\leqs t$. Let 
$$'\ttz_{\prec A}^{a}=\{B\in \ttz_{\prec A}^{a}\mid h_{B,A}\not\in v^{-1}\bbz[v^{-1}]\}.$$

%For a fixed $A\in\ttz_{\vartri}^{+}(n)_{\bf d},~{\bf d}\in\bbn^n_\vartri$, since
%$$m^{(A)}=\wit{u}_A+\sum_{B\prec A,B\in\ttz_\vartri^+(n)_{\bf d}}h_{B,A}\wit{u}_B,~h_{B,A}\in\bbz[v,v^{-1}],$$
In the summation \eqref{tri rel}, assume $'\ttz_{\prec A}^{a}\not=\emptyset$ with $a$ minimal. Then $p_{B,A}:=h_{B,A}\in v^{-1}\bbz[v^{-1}]$ for all $B\in \ttz_{\prec A}^i$ with $i<a$ or $B\in \ttz_{\prec A}^a\backslash{}'\ttz_{\prec A}^a$.
For each $B\in{}'\ttz_{\prec A}^{a}$, $h_{B,A}\notin v^{-1}\bbz[v^{-1}]$ has a unique decomposition $h_{B,A}=h'_{B,A}+p_{B,A}$
with $\ol{h'_{B,A}}=h'_{B,A}$ and $p_{B,A}\in v^{-1}\bbz[v^{-1}]$.
Then
$$m^{(A)}-\sum_{B\in{}'\ttz_{\prec A}^a}h'_{B,A}m^{(B)}=\wit{u}_{A}+\sum_{B\in\ttz_{\prec A}^i,i\leqs a}p_{B,A}
\wit{u}_B+\sum_{\stackrel{B\in\ttz_{\prec A}^i}{i>a}}g_{B,A}\wit{u}_B.$$

Continue this argument with $g_{B,A}$ if necessary, we eventually obtain
$$m^{(A)}-\sum_{B\in{}'\ttz_{\prec A}}h'_{B,A}m^{(B)}\in\wit{u}_{A}+\sum_{{B\leqs_{\dg} A}\atop{B\in\ttz_{\vartri}^{+}(n)}}v^{-1}\bbz[v^{-1}]
\wit{u}_B,$$
where $'\ttz_{\prec A}$ is a union of those $'\ttz_{\prec A}^a$.
Since 
$$\ol{m^{(A)}-\sum_{B\in{}'\ttz_{\prec A}}h'_{B,A}m^{(B)}}=m^{(A)}-\sum_{B\in{}'\ttz_{\prec A}}h'_{B,A}m^{(B)},$$ by the uniqueness of the canonical basis of $\fkH_\vartri(n)$
with respect to the PBW type basis $\wit{u}_A$, we have proved the following.

\begin{algo}\label{another const for hall alg}
For $A\in\ttz_{\vartri}^{+}(n)$, there exist a recursively constructed subset $'\ttz_{\prec A}$ of $\ttz_A$ and elements
$h'_{B,A}\in\bbz[v,v^{-1}]$ for all $B\in {}'\ttz_{\prec A}$ such that $\ol{h'_{B,A}}=h'_{B,A}$ and
  $$\scc_A=m^{(A)}-\sum_{B\in{}'\ttz_{\prec A}}h'_{B,A}m^{(B)}$$
  is the canonical basis element associated with $A$.
  %of $\fkH_\vartri(n)$ with respect to the defining basis $\{\wit{u}_A\}_{A\in\ttz_{\vartri}^{+}(n)}$.
\end{algo}

If ${}'\ttz_{\prec A}=\emptyset$, then $\scc_A=m^{(A)}$. Such a $\scc_A$ is call a {\it tight monomial}, following \cite{Lu93a}.

\section{Slices of the canonical basis}

In certain finite type cases, the canonical bases can be explicitly computed. See, for example, Lusztig \cite[\S3]{Lusztig1990canonical} for types $A_1$ and $A_2$ and \cite{XicanonicalA31999, XicanonicalB21999} for types $A_3$ and $B_2$. It is natural to expect that this is the case for quantum affine $\mathfrak{gl}_2$. However, this is much more complicated. In the next three sections, we present explicit formulas of the canonical basis for five ``slices''. We will see that if a module's Loewy length increases, the computation becomes more difficult.

The slices of the canonical basis is defined according to the Loewy length and periodicity of modules. In other words, for $(l,p)\in\bbn^2$ with $l\geq1,l\geq p\geq0$, let
$$\mathscr{C}(\wih{\fkg\fkl}_n)_{(l,p)}=\{\scc_A\mid \ell(A)=l,p(A)=p\}\quad(\text{resp.},\mathscr{M}(\wih{\fkg\fkl}_n)_{(l,p)}=\{m^{(A)}\mid \ell(A)=l,p(A)=p\}).$$
which is called a  {\it canonical ({\rm resp.,} monomial) slice}. Clearly, each of the bases is a disjoint union of slices. 

In the remaining paper, we will compute the slices $\mathscr{C}(\wih{\fkg\fkl}_2)^+_{(l,p)}$ for $l\leq 2$. We first compute
the cases for $(l,p)\in\{(1,0),(1,1),(2,0)\}$ which are relatively easy.

\begin{prop}\label{slice11} For $(l,p)=(1,0)$ or $(1,1)$, we have
$$\aligned
\mathscr{C}(\wih{\fkg\fkl}_2)_{(1,0)}&=\mathscr{M}(\wih{\fkg\fkl}_2)_{(1,0)}=\{\wit{u}_{aS_1},\wit{u}_{bS_2}\mid a,b\in\bbn-0\}\text{ and }\\
\mathscr{C}(\wih{\fkg\fkl}_2)_{(1,1)}&=\mathscr{M}(\wih{\fkg\fkl}_2)_{(1,1)}=\{\wit{u}_{aS_1\oplus bS_2}\mid a,b\in\bbn,ab\neq0\}.\endaligned
$$
\end{prop}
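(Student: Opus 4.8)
The plan is to observe that, for $n=2$, the matrices $A$ with $\ell(A)=1$ are exactly the semisimple ones, to split them into the two asserted families according to periodicity, and to check that in each case the distinguished word $w_A$ furnished by Proposition \ref{dist words} (equivalently, by Algorithms \ref{algorithm for strongly periodic} and \ref{algorithm for aperiodic part}) is a \emph{single} letter. Granting this, the associated monomial $m^{(A)}$ is literally the PBW-type basis element $\wit u_A$, hence bar-invariant; by the uniqueness characterising $\scc_A$ relative to $\{\wit u_B\}$ this forces $\scc_A=\wit u_A=m^{(A)}$, i.e.\ $A$ is a tight monomial (${}'\ttz_{\prec A}=\emptyset$), and collecting over $a,b$ yields both displayed identities.

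First I would classify the relevant $A$. Since $\ell(A)=1$ means $\rad M(A)=0$, $M(A)$ is semisimple; as the indecomposable $S_i[l]$ has Loewy length $l$, this gives $M(A)=\bigoplus_{i\in I}a_{i,i+1}S_i$, which for $n=2$ reads $M(A)=aS_1\oplus bS_2$ with $(a,b)=(a_{1,2},a_{2,3})\neq(0,0)$. Reading off $p(A)$ from its definition, such an $A$ is aperiodic (so $p(A)=0$) precisely when $ab=0$, and strongly periodic with $p(A)=\ell(A)=1$ precisely when $ab\neq0$; thus $(l,p)\in\{(1,0),(1,1)\}$ exhausts $\ell(A)=1$, and the right-hand index sets of the proposition are $\{aS_1,\,bS_2\mid a,b\geqs1\}$ and $\{aS_1\oplus bS_2\mid a,b\geqs1\}$ respectively.

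Next I would pin down $w_A$. If $M(A)=aS_1$ (the case $bS_2$ being symmetric), then any $w=i_1\cdots i_m\in\ssz_I$ with $\wp(w)=A$ satisfies $S_{i_1}*\cdots*S_{i_m}\cong aS_1$; comparing dimension vectors (and using $\Ext^1(S_1,S_1)=0$ in $\Rep^0\Delta(n)$ for $n\geqs2$, so that the generic extension of $a$ copies of $S_1$ is $aS_1$) forces $w=1^a$, which is distinguished since $\fkh^{aS_1}_{aS_1}=1$. Its tight form is a single block, so $m^{(A)}=\wit u_{aS_1}=\wit u_A$. If $M(A)=aS_1\oplus bS_2$ with $ab\neq0$, then $A$ is strongly periodic with $p(A)=1$, so by Proposition \ref{dist words}(3) $w_A=\bm a_1$ with $S_{\bm a_1}\cong M(A)$ (here $\soc^{p-s+1}/\soc^{p-s}$ with $p=s=1$ is just the semisimple $M(A)$ itself), i.e.\ $\bm a_1=(a,b)\in I^{\text{sin}}$; this is a single sincere letter, so $m^{(A)}=\wit u_{(a,b)}=\wit u_{aS_1\oplus bS_2}=\wit u_A$.

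Finally, in either case $m^{(A)}=\wit u_A$; since the bar involution on $\fkH_\vartri(n)$ fixes $m^{(A)}$ by definition, $\wit u_A$ is bar-invariant, and the uniqueness of $\scc_A$ relative to the PBW-type basis gives $\scc_A=\wit u_A=m^{(A)}$. As a consistency check, the correction sum in the triangular relation \eqref{triangluarrelation} is empty: by \eqref{dg order} together with the elementary fact that a semisimple module is the $\leqs_\dg$-minimum of its dimension-vector class (equivalently, $\sz_{i,j}(T)\leqs\sz_{i,j}(A)$ for all $i\neq j$ forces every entry $t_{s,t}$ with $t\geqs s+2$ to vanish, so $T$ is semisimple and hence $T=A$), there is no $T\prec A$ with $\bdim M(T)=\bdim M(A)$. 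Letting $a,b$ run over $\bbn\setminus\{0\}$ and assembling proves the proposition. There is essentially no obstacle here; the only point requiring care is the correct reading of Proposition \ref{dist words} and the algorithms in these smallest cases, which is precisely why the paper flags the $l=1$ slices as the easy ones and the genuine difficulty begins only at Loewy length $2$.
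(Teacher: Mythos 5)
Your argument is correct and is exactly the reasoning the paper leaves implicit (it states Proposition \ref{slice11} without proof and only remarks afterwards that ${}'\ttz_{\prec A}=\emptyset$ in these slices): the Loewy-length-one matrices are the semisimple ones, the distinguished word is a single (plain or sincere) letter, so $m^{(A)}=\wit u_A$ is bar-invariant and tight. Your consistency check that a semisimple module is $\leqs_{\dg}$-minimal in its dimension-vector class is a nice touch but not strictly needed once $m^{(A)}=\wit u_A$ is observed directly.
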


For $(l,p)=(2,0)$, all modules are aperiodic. If we put 
$$\mathscr{M}(\wih{\fkg\fkl}_n)_{ap}=\{m^{(A)}\mid A\in\ttz_\vartri^{ap}(n)\}$$
(cf. \eqref{monomial basis}),
then the structure of the monomial basis $\mathscr{M}(\wih{\fkg\fkl}_2)_{ap}$ for the $+$-part $U^+_\bbz(2)$ of quantum affine $\mathfrak{sl}_2$  has a very simple description.

A sequence $(a_1,a_2,\ldots,a_l)\in\bbn^l$ is called a {\it pyramidic} if there exists $k$, $1\leq k\leq l$, such that
$$a_1\leqs a_{2}\leqs\cdots \leqs a_k,\quad a_k\geqs a_{k+1}\geqs \cdots\geqs a_l.$$

We identify the positive part $U^+_{\mathcal Z}(n)$ with the composition algebra under the isomorphism $\fkC_\vartri(n)\iso U_\cz^+(2),~u_i^{(m)}\mapsto E_i^{(m)}$ as given in Theorem~\ref{decomp of Hall alg and generators}.
\begin{lem}\label{structure of monomial basis} We have
$$\mathscr{M}(\wih{\fkg\fkl}_2)_{ap}=\{E_{i}^{(a_1)}E_{i+1}^{(a_2)}E_{i}^{(a_3)}E_{i+1}^{(a_4)}\cdots E_{i'}^{(a_l)}\mid i\in\bbz_2,(a_1,a_{2},\ldots, a_l) \text{ is pyramidic, }\forall l\in\bbn\},$$
where $i'=i$ if $l$ is odd and $i'=i+1$ if $l$ is even.
\end{lem}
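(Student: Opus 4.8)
The plan is to unwind Algorithm~\ref{algorithm for aperiodic part} for $n=2$, read off its output, and then check that every word on the right-hand side arises. Since an aperiodic $A$ has trivial strongly periodic part, $w_A=w_{A''}\in\ssz_I$ is produced by Algorithm~\ref{algorithm for aperiodic part} alone; writing $w_A=j_1^{e_1}\cdots j_t^{e_t}$ in tight form and using $\wit u_{mS_i}=u_i^{(m)}$ together with the isomorphism $\fkC_\vartri(2)\iso U_\cz^+(2)$ of Theorem~\ref{decomp of Hall alg and generators} gives $m^{(A)}=E_{j_1}^{(e_1)}\cdots E_{j_t}^{(e_t)}$. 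Also, a pyramidic sequence cannot have a zero strictly between two positive entries, so after deleting the factors $E_i^{(0)}=1$ every element of the claimed set is either $1$ or $E_i^{(a_1)}E_{i+1}^{(a_2)}\cdots$ with $(a_1,\dots,a_l)$ pyramidic and all $a_k\geqs1$. Hence it suffices to prove that $\{w_A\mid A\in\ttz_\vartri^{ap}(2)\}$ equals the set $\cw$ of words $i^{a_1}(i+1)^{a_2}i^{a_3}\cdots$ ($i\in\bbz_2$) with $(a_k)$ pyramidic and all $a_k\geqs1$, together with the empty word.

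For $n=2$, aperiodicity of $A$ says exactly that $M(A)=\bigoplus_{d\geqs1}c_dS_{v_d}[d]$ with $c_d\geqs0$ and a single vertex $v_d\in\bbz_2$ chosen for each $d$ with $c_d>0$; equivalently, on each length-$d$ diagonal of $A$ at most one entry is nonzero. Running Algorithm~\ref{algorithm for aperiodic part} on such an $A$ with $l=\ell(A)$, I would establish in turn: (i) the running matrix $B$ keeps this shape, the only risky step being $B:=B-T+\wit T^+$, which clears row $j$ on the diagonals $j',\dots,l-i+1$ and re-enters those entries one row lower, i.e. on the diagonals $j'-1,\dots,l-i$; here aperiodicity of $B$ and minimality of $j'$ force $b_{j,j+(j'-1)}=0$ (otherwise $j'-1$ would be an admissible smaller cut), so no diagonal ends with two occupied rows; (ii) since $B$ is aperiodic, the $(l-i+1)$st diagonal has a single nonzero row $j_i$ and, because $j'\leqs l-i+1$, is cleared in one iteration, so step $i$ outputs the one block $j_i^{e_i}$; (iii) the value deposited in row $j_i+1$ on the $(l-i)$th diagonal is $b_{j_i,j_i+l-i+1}\neq0$, so at step $i+1$ that diagonal is nonzero with unique nonzero row $j_i+1$, whence $j_{i+1}=j_i+1$. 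As step $1$ treats the longest (hence nonzero) diagonal, $w_A$ has exactly $l$ blocks, alternates, and starts at the vertex $v_l$ of the longest summand, so $\{w_A\}\subseteq\cw$ once we know $(e_1,\dots,e_l)$ is pyramidic.

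For the pyramidicity I would track $e_i$ as the sum of the row-$j_i$ entries of $B$ on the diagonals $j'_i,\dots,l-i+1$ at the start of step $i$: the entries pushed down in step $i$ sum to exactly $e_i$, and comparing with $e_{i+1}$ one finds $e_{i+1}\geqs e_i$ when the cut index $j'_{i+1}$ is $\leqs j'_i-1$ and $e_{i+1}\leqs e_i$ otherwise, while the second alternative, once it occurs, persists as the long diagonals become exhausted; this produces the single peak. For the reverse inclusion, $A\mapsto w_A$ is injective because $\wp(w_A)=A$ (Proposition~\ref{dist words}), so it suffices to see that $w\mapsto\wp(w)$ sends $\cw$ injectively into $\ttz_\vartri^{ap}(2)$; granting this, $w_{\wp(w)}$ is defined, lies in $\cw$ by (i)--(iii), satisfies $\wp(w_{\wp(w)})=\wp(w)$, hence equals $w$ by injectivity of $\wp$ on $\cw$, so $w\mapsto\wp(w)$ is a right inverse of $A\mapsto w_A$ and the latter is onto $\cw$. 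The needed injectivity (and aperiodicity of $\wp(w)$) I would get from an explicit description of $M(\wp(w))=\bigoplus_d c_d S_{v_d}[d]$ in terms of $(i,(a_k))$, obtained by computing the generic extension $S_i^{*a_1}*S_{i+1}^{*a_2}*\cdots$: for instance a weakly decreasing exponent sequence yields $c_d=e_d-e_{d+1}$ at vertex $i$ for all $d$, a weakly increasing one yields the ``reversed'' data, and a general pyramidic one interleaves the two, so that $(i,(a_k))$ can be read back off $\wp(w)$.

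I expect step (iii) and the generic-extension computation in the last step to be the main obstacle: both are elementary but require carefully following how the repeatedly pushed-down entries accumulate along the two residue classes of diagonals modulo $2$, and it is precisely this accumulation pattern that forces a single peak in the exponent sequence.
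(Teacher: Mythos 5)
Your proposal is correct; it coincides with the paper's proof on one inclusion and takes a genuinely different route on the other. For the inclusion $\mathscr{M}(\wih{\fkg\fkl}_2)_{ap}\subseteq\{\text{pyramidic products}\}$ the paper simply says ``apply Algorithm \ref{algorithm for aperiodic part}'', while you actually supply the argument: your claims (i)--(iii) all check out (for (i), if $b_{j,j+j'-1}\neq0$ then minimality of $j'$ forces $j'-1=\ell(M_{j+1}(B))$, which would put two nonzero entries on one diagonal), and your increasing/decreasing dichotomy for the exponents is right. The cleanest way to see that ``the second alternative, once it occurs, persists'' is to note that for $n=2$ your decreasing case can only happen when $\ell(M_{j_i+1}(B))=0$, i.e.\ when the running module sits entirely at the single vertex $j_i$ (summands of equal length at both vertices would violate aperiodicity), and that one-vertex state is stable under the push-down; this gives the single peak. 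For the reverse inclusion the two proofs diverge: the paper constructs, for each pyramidic word, the explicit matrix whose algorithm output is that word, building it from the weakly decreasing tail and inserting the increasing exponents one at a time, whereas you bootstrap from the forward inclusion via injectivity of $\wp$ on pyramidic words. Your route is logically sound (aperiodicity of $\wp(w)$ for $w\in\ssz_I$ is already recorded in \S 2, so that part is free), but the explicit description of $M(\wp(w))$ that you still need for injectivity is exactly the matrix the paper writes down, so the computational core of the two converses is the same; what your packaging buys is that you never have to re-run the algorithm on the constructed matrix to check it returns the original word --- the paper asserts that step, you deduce it from injectivity.
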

\begin{proof}
Applying Algorithm \ref{algorithm for aperiodic part} to $A\in\ttz_\vartri^{ap}(2)$, we know $m^{(A)}$ has the desired form. 

Conversely, for a given 
$$E(i,\bm a)=E_{i}^{(a_1)}E_{i+1}^{(a_2)}E_{i}^{(a_3)}E_{i+1}^{(a_4)}\cdots E_{k}^{(a_k)}\cdots E_{l}^{(a_l)}$$
where $$0< a_1\leqs a_{2}\leqs\cdots \leqs a_k,\quad a_k\geqs a_{k+1}\geqs \cdots\geqs a_l>a_{l+1}=0,$$
we construct an $A\in\ttz_\vartri^{ap}(2)$ such that $m^{(A)}=E(i,\bm a)$. Since there are 8 cases for $(i,k,l)$, we only prove the case where $(i,k,l)=(1,1,1)$. The proof for other cases is similar.

First, the matrix giving  $E_{1}^{(a_k)}\cdots E_{1}^{(a_l)}$ by the algorithm has the form
\begin{equation*}
  \begin{pmatrix}
    0 & a_k-a_{k+1} & a_{k+1}-a_{k+2} & \cdots & a_{l-1}-a_l & a_l\\
    0 & 0 & 0 &\cdots & 0 & 0
  \end{pmatrix}.
\end{equation*}
For $a_{k-1}$, there exists a unique $i_0\in\bbn$ such that $a_{k+i_0}\geqs a_{k-1}> a_{k+i_0+1}$, and so $a_{k+i_0}-a_{k+i_0+1}=(a_{k+i_0}-a_{k-1})+(a_{k-1}-a_{k+i_0+1})$.
Now, the matrix giving $E_{2}^{(a_{k-1})}E_{1}^{(a_k)}\cdots E_{1}^{(a_l)}$ has the form
\begin{equation*}
  \begin{pmatrix}
    0 & a_k-a_{k+1} & a_{k+1}-a_{k+2} & \cdots & a_{k+i_0}-a_{k-1} & 0& 0 &\cdots& 0 & 0\\
    0 & 0 & 0 &\cdots & 0 & 0 & a_{k-1}-a_{k+i_0+1}  & \cdots & a_{l-1}-a_l& a_l
  \end{pmatrix}.
\end{equation*}
Continuing this pattern for $a_{k-2},\cdots,a_2,a_1$ yields eventually the required matrix $A$.
\end{proof}

We take an example to illustrate the construction.
\begin{exam}Consider
  $$E_1^{(2)}E_2^{(3)}E_1^{(5)}E_2^{(8)}E_1^{(9)}E_2^{(6)}E_1^{(4)}E_2^{(3)}E_1^{(1)}.$$

First,  the matrix giving $E_1^{(9)}E_2^{(6)}E_1^{(4)}E_2^{(3)}E_1^{(1)}$ is
\begin{equation*}
  \begin{pmatrix}
    0 &9-6& 6-4 & 4-3 & 3-1 & 1\\
    0 & 0 & 0 & 0 & 0 & 0
  \end{pmatrix}=
  \begin{pmatrix}
    0 &3& 2 & 1 & 2 & 1\\
    0 & 0 & 0 & 0 & 0 & 0
  \end{pmatrix}.
\end{equation*}

Since $9>8>6$, then the matrix giving $E_2^{(8)}E_1^{(9)}E_2^{(6)}E_1^{(4)}E_2^{(3)}E_1^{(1)}$ is
\begin{equation*}
  \begin{pmatrix}
    0 &9-8& 0 & 0 & 0 & 0 & 0 & 0\\
    0 & 0 & 0 & 8-6 & 6-4 & 4-3 & 3-1 & 1
  \end{pmatrix}=
  \begin{pmatrix}
    0 &1& 0 & 0 & 0 & 0 & 0 & 0\\
    0 & 0 & 0 & 2 & 2 & 1 & 2 & 1
  \end{pmatrix}.
\end{equation*}

Due to $6>5>4$, then the matrix giving $E_1^{(5)}E_2^{(8)}E_1^{(9)}E_2^{(6)}E_1^{(4)}E_2^{(3)}E_1^{(1)}$ is
\begin{equation*}
  \begin{pmatrix}
    0 &9-8& 0 & 0 & 5-4 & 4-3 & 3-1 & 1\\
    0 & 0 & 0 & 8-6 & 6-5 & 0 & 0 & 0
  \end{pmatrix}=
  \begin{pmatrix}
    0 &1& 0 & 0 & 1 & 1 & 2 & 1\\
    0 & 0 & 0 & 2 & 1 & 0 & 0 & 0
  \end{pmatrix}.
\end{equation*}

Since $4> 3\geqs3$, then the matrix giving $E_2^{(3)}E_1^{(5)}E_2^{(8)}E_1^{(9)}E_2^{(6)}E_1^{(4)}E_2^{(3)}E_1^{(1)}$ is
\begin{equation*}
  \begin{pmatrix}
    0 &9-8& 0 & 0 & 5-4 & 4-3 & 0 & 0 & 0 & 0\\
    0 & 0 & 0 & 8-6 & 6-5 & 0 & 0 & 3-3 &3-1 & 1
  \end{pmatrix}=
  \begin{pmatrix}
    0 &1& 0 & 0 & 1 & 1 & 0 & 0 &0 & 0\\
    0 & 0 & 0 & 2 & 1 & 0 & 0 & 0 &2 & 1
  \end{pmatrix}.
\end{equation*}

Finally, since $3>2>1$, the matrix giving $E_1^{(2)}E_2^{(3)}E_1^{(5)}E_2^{(8)}E_1^{(9)}E_2^{(6)}E_1^{(4)}E_2^{(3)}E_1^{(1)}$ has the form
\begin{equation*}
  \begin{pmatrix}
    0 &9-8& 0 & 0 & 5-4 & 4-3 & 0 & 0 & 2-1 & 1\\
    0 & 0 & 0 & 8-6 & 6-5 & 0 & 0 & 3-3 &3-2 & 0
  \end{pmatrix}=
  \begin{pmatrix}
    0 &1& 0 & 0 & 1 & 1 & 0 & 0 & 1 & 1\\
    0 & 0 & 0 & 2 & 1 & 0 & 0 & 0 & 1 & 0
  \end{pmatrix}.
\end{equation*}
\end{exam}
\vspace{.3cm}
Now we are ready to describe the slice $\mathscr{C}(\widehat{\mathfrak{gl}}_2)_{(2,0)}$ which is similar to the slices in Proposition \ref{slice11}.

%For $A\in\ttz_\vartri^+(2)$ with $\ell(A)=2$, we know $p(A)\leqs 2$.
%First we show the monomial bases $m^{(A)}$ associated to aperiodic matrix $A$, \ie $p(A)=0$, are all tight monomials.

\begin{prop}\label{slice20}
For $(l,p)=(2,0)$, we have
$$
\mathscr{C}(\wih{\fkg\fkl}_2)_{(2,0)}=\mathscr{M}(\wih{\fkg\fkl}_2)_{(2,0)}=\{E_1^{(a+b)}E_2^{(b)},E_2^{(b)}E_1^{(a+b)},E_1^{(b)}E_2^{(a+b)},E_2^{(a+b)}E_1^{(b)}\mid a,b\in\bbn, b>0\}.$$
% In other words, the canonical basis elements in this slice are all tight monomials. 
\end{prop}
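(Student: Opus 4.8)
The plan is to establish two things and combine them: (i) the monomial slice $\mathscr{M}(\wih{\fkg\fkl}_2)_{(2,0)}$ is exactly the displayed four-parameter family, and (ii) every $m^{(A)}$ with $\ell(A)=2$, $p(A)=0$ is tight, i.e. $\scc_A=m^{(A)}$. Granting both, $\mathscr{C}(\wih{\fkg\fkl}_2)_{(2,0)}=\{\scc_A\}=\{m^{(A)}\}=\mathscr{M}(\wih{\fkg\fkl}_2)_{(2,0)}$ is the asserted set.

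For (i) I would read off the slice from Lemma~\ref{structure of monomial basis}. Running Algorithm~\ref{algorithm for aperiodic part} on an aperiodic $A$ with $\ell(A)=2$ shows that the distinguished word $w_A$ has exactly two tight segments, so $m^{(A)}=E_i^{(a_1)}E_{i+1}^{(a_2)}$ with $i\in\{1,2\}$ and $a_1,a_2\geqs 1$; conversely every length-$2$ sequence is pyramidic, and the construction in the proof of Lemma~\ref{structure of monomial basis} realises it by some $A\in\ttz_\vartri^{ap}(2)$. It remains to organise these monomials: aperiodicity together with $\ell(A)=2$ forces $M(A)\cong m_2S_i[2]\oplus m_1S_j$ with $m_2\geqs1$, $m_1\geqs0$, $j\in\{i,i+1\}$, and computing the distinguished word for each of the four shapes (using $\wit u_{mS_i}=u_i^{(m)}=E_i^{(m)}$, cf. Theorem~\ref{decomp of Hall alg and generators}) and setting $b=m_2$, $a=m_1$ yields precisely $E_1^{(a+b)}E_2^{(b)}$, $E_1^{(b)}E_2^{(a+b)}$, $E_2^{(a+b)}E_1^{(b)}$ and $E_2^{(b)}E_1^{(a+b)}$ with $b>0$, $a\geqs0$ (the value $a=0$ giving the harmless overlaps $E_i^{(b)}E_{i+1}^{(b)}$).

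For (ii) fix such an $A$; by the rotational symmetry $i\mapsto i+1$ of $\Delta(2)$ it suffices to treat the two cases $m^{(A)}=\wit u_{xS_1}\wit u_{yS_2}$ with $\{x,y\}=\{a+b,b\}$. By Algorithm~\ref{another const for hall alg} it is enough to show that in the triangular relation \eqref{tri rel} every coefficient $h_{B,A}$ with $B\prec A$ lies in $v^{-1}\bbz[v^{-1}]$; since $m^{(A)}$ is bar-invariant by construction, it then satisfies the characterising properties of $\scc_A$ and so equals it by uniqueness (equivalently, ${}'\ttz_{\prec A}=\emptyset$). I would obtain \eqref{tri rel} directly from the twisted multiplication formula Lemma~\ref{DuFu} applied to $\wit u_{xS_1}\wit u_{yS_2}$: here the first argument is the semisimple $\az=(x,0)$ and the second is the matrix of $yS_2$ (with $a_{2,3}=y$ its only nonzero core entry), and the support bounds on $T$ recorded after Theorem~\ref{main-multip-theorem} force every admissible $T$ with $\row(T)=\az$ to be determined by a single parameter $d=t_{1,3}\in[0,b]$, with $t_{1,2}=x-d$ and all other entries $0$. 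Then $A+T-\wit T^+=C_d$ with $M(C_d)\cong(x-d)S_1\oplus dS_1[2]\oplus(y-d)S_2$ (so $\bdim M(C_d)=\bdim M(A)$ and $C_b=A$), all Gaussian-binomial factors in Lemma~\ref{DuFu} equal $1$, and the exponent is $f_{A,T}=-(a+b-d)(b-d)\leqs 0$, vanishing exactly when $d=b$. Hence $h_{C_d,A}=v^{-(a+b-d)(b-d)}\in v^{-1}\bbz[v^{-1}]$ for $d<b$ (these $C_d$ being $\prec A$ by \eqref{triangluarrelation}), while $d=b$ contributes the diagonal term $\wit u_A$; so $m^{(A)}$ is tight.

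The step I expect to be the main obstacle is the bookkeeping in (i): tracing Algorithm~\ref{algorithm for aperiodic part} carefully over all aperiodic rank-$2$ matrices of Loewy length $2$ to be certain the four families are exhaustive and pairwise disjoint apart from the $a=0$ coincidences, and identifying precisely which $B$ enter \eqref{tri rel}. Once that combinatorial set-up is pinned down, the remaining input — trivial Gaussian-binomial factors and negativity of the single exponent $-(a+b-d)(b-d)$ for $d<b$ — is a short one-parameter computation.
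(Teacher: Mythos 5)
Your proposal is correct and follows essentially the same route as the paper: enumerate the four shapes of aperiodic $A$ with $\ell(A)=2$ (the paper lists the four matrices, you the equivalent module decompositions $m_2S_i[2]\oplus m_1S_j$), read off $m^{(A)}$ from Algorithm \ref{algorithm for aperiodic part}/Lemma \ref{structure of monomial basis}, and verify tightness by applying Lemma \ref{DuFu} to $\wit u_{xS_1}\wit u_{yS_2}$, where the admissible $T$ form a one-parameter family with trivial Gaussian factors and exponent $-(a+b-t)(b-t)\in v^{-1}\bbz[v^{-1}]$ for $t<b$. The only cosmetic difference is that you reduce the four cases to two via the rotation $i\mapsto i+1$, whereas the paper computes one case and declares the rest similar.
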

\begin{proof}Suppose $A\in\ttz_\vartri^+(2)$ with $(\ell(A),p(A))=(2,0)$, then $A$ is one of the following matrices
\begin{equation*}
  \begin{pmatrix}
    \bm0 & a & b\\0 & \bm0 & 0
  \end{pmatrix},\quad\begin{pmatrix}
    \bm0 & 0 & 0 & 0\\0 & \bm0 & a & b
  \end{pmatrix},\quad\begin{pmatrix}
   \bm 0 & 0 & b\\0 & \bm0 & a
  \end{pmatrix},\quad\begin{pmatrix}
    \bm0 & a & 0 &0\\0 & \bm0 & 0 & b
  \end{pmatrix},\quad\forall~a,b\in\bbn, b>0.
\end{equation*}
Applying Algorithm \ref{algorithm for aperiodic part} to these matrices or by Lemma \ref{structure of monomial basis},
the monomial $m^{(A)}$ has the following form
$$E_1^{(a+b)}E_2^{(b)},\;\;E_2^{(a+b)}E_1^{(b)},\;\;E_1^{(b)}E_2^{(a+b)},\;\;E_2^{(b)}E_1^{(a+b)}.$$
We now prove that these monomials are tight monomials. We only look at the first case, the other cases are similar.
We now apply the formula in Lemma \ref{DuFu} to compute 
$$m^{\begin{psmallmatrix}
     0 & a & b\\
     0 & 0 & 0
   \end{psmallmatrix}}=E_1^{(a+b)}E_2^{(b)}=\wit{u}_{(a+b)S_1}\wit{u}_{bS_2}.$$
Since $\alpha=(a+b,0)$, the matrix $T$ in the sum must be of the form  $\begin{psmallmatrix}
  0 & a+b-t & t\\
  0 & 0 & 0
\end{psmallmatrix}.$
Thus,
\begin{equation*}
  \begin{split}
   m^{\begin{psmallmatrix}
     0 & a & b\\
     0 & 0 & 0
   \end{psmallmatrix}}&=\sum_{t\leqs b}v^{-(a+b-t)(b-t)}\wit{u}_{\begin{psmallmatrix}
  0 & a+b-t & t\\
  0 & 0 & b-t
\end{psmallmatrix}}\\
&=\wit{u}_{\begin{psmallmatrix}
  0 & a & b\\
  0 & 0 & 0
\end{psmallmatrix}}+\sum_{t<b}v^{-(a+b-t)(b-t)}\wit{u}_{\begin{psmallmatrix}
  0 & a+b-t & t\\
  0 & 0 & b-t
\end{psmallmatrix}},
  \end{split}
\end{equation*}
which is the canonical basis element associated to $\begin{psmallmatrix}
     0 & a & b\\
     0 & 0 & 0
   \end{psmallmatrix}$, since $v^{-(a+b-t)(b-t)}\in v^{-1}\bbz[v^{-1}]$ for all $t<b$.
\end{proof}

In the three slices above, the recursively constructed subset $'\ttz_{\prec A}$ in Algorithm \ref{another const for hall alg} is empty. So they consist of tight monomials.

\section{Computing the slice $\mathscr{C}(\widehat{\mathfrak{gl}}_2)_{(2,1)}$}

For computing the slices $\mathscr{C}(\widehat{\mathfrak{gl}}_2)_{(2,1)}$ and $\mathscr{C}(\widehat{\mathfrak{gl}}_2)_{(2,2)}$ in this and next sections, we consider a matrix of the form 
\begin{equation}\label{matA}
A=\begin{pmatrix}
  0 & a & c & 0\\
  0 & 0 & b & d
\end{pmatrix}\in\ttz_\vartri^+(2)
\end{equation}
satisfying $\ell(A)=2,p(A)>0$, where $a,b,c,d\in\bbn$. Then $c+d\neq0$ and $ab+cd\neq0$.

\begin{lem} \label{poset ideal1}
For the $A$ as given above, we have
$$\ttz_A=(0,A]=\{A_{(k_1,k_2)}\mid k_1,k_2\in \bbn,(k_1,k_2)\leqs (c,d)\},$$ where $$A_{(k_1,k_2)}=\begin{pmatrix}
  0 & a+c+d-k_1-k_2 & k_1 &0\\0 &  0 & b+c+d-k_1-k_2 & k_2
\end{pmatrix}.$$
\end{lem}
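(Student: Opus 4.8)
The plan is to verify the two conditions characterising the interval $\ttz_A = (0,A]$ directly: first that every $A_{(k_1,k_2)}$ with $(k_1,k_2)\leqs(c,d)$ indeed lies in $(0,A]$, and conversely that every $B\leqs_{\dg}A$ has this form. For the forward inclusion, I would use the criterion \eqref{dg order}, namely $B\leqs_{\dg}A \Leftrightarrow B\preccurlyeq A$ and $\bdim M(A)=\bdim M(B)$. So first I would check the dimension-vector condition: a short computation of $\row$ and $\col$ (equivalently, the two coordinates of $\bdim$) for $A_{(k_1,k_2)}$ shows it agrees with $\bdim M(A) = (a+c+d,\, b+c+d)$ regardless of $(k_1,k_2)$; note that moving weight from the "off-diagonal far" entries to the near entries while compensating is exactly the operation $A+T-\wit T^+$ appearing in Theorem~\ref{main-multip-theorem}, which preserves dimension vectors. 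Then I would check $A_{(k_1,k_2)}\preccurlyeq A$ by computing the finitely many relevant $\sz_{i,j}$ values; since $n=2$ and the matrices have support confined to two rows and four columns (in core form), there are only a handful of pairs $(i,j)$ to examine, and the inequalities $\sz_{i,j}(A_{(k_1,k_2)})\leqs\sz_{i,j}(A)$ reduce to the coordinatewise bounds $k_1\leqs c$, $k_2\leqs d$ together with trivial monotonicity.

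For the reverse inclusion, suppose $B\in\ttz_\vartri^+(2)$ with $B\leqs_{\dg}A$. By \eqref{dg order} again, $\bdim M(B)=\bdim M(A)$ and $B\preccurlyeq A$. Since $\ell(A)=2$, every $B\preccurlyeq A$ also has $\ell(B)\leqs 2$ (degeneration cannot increase Loewy length — this follows from the fact that $\leqs_{\dg}$ refines the closure order and $\soc$-filtration lengths are semicontinuous; alternatively one reads it off the $\sz_{i,j}$ inequalities, since $\sz_{i,j}(B)\leqs\sz_{i,j}(A)=0$ forces $b_{i,j}=0$ whenever the column index exceeds the Loewy bound). Hence $B$ is supported on the same columns $2,3,4$ (in core form) as the general matrix of Loewy length $\leqs 2$, i.e. $B$ has the shape $\begin{psmallmatrix} 0 & a' & c' & 0\\ 0 & 0 & b' & d'\end{psmallmatrix}$. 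Now the two equations from $\bdim M(B)=\bdim M(A)$ are $a'+c'+d'=a+c+d$ and $b'+c'+d'=b+c+d$, and the $\sz$-inequalities give $c'\leqs c$ and $d'\leqs d$ (these are precisely the $\sz_{i,j}$ at the extreme pairs). Setting $k_1=c'$, $k_2=d'$, the dimension equations force $a'=a+c+d-k_1-k_2$ and $b'=b+c+d-k_1-k_2$, so $B=A_{(k_1,k_2)}$ with $(k_1,k_2)\leqs(c,d)$, as claimed.

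The main obstacle I anticipate is the bookkeeping in the reverse direction: establishing cleanly that $B\leqs_{\dg}A$ forces $B$ to have the restricted $2\times 4$ core shape (no extra nonzero entries, in particular no entries on longer "diagonals" that would make $\ell(B)>2$ or entries in column $5$ and beyond). This is where one must be careful with the cyclic-quiver indexing and the periodicity convention $a_{i,j}=a_{i+n,j+n}$; the cleanest route is to note that $\sz_{i,j}(B)\leqs\sz_{i,j}(A)$ for all $i\ne j$, applied to pairs $(i,j)$ with $j-i\geqs 3$, yields $\sz_{i,j}(A)=0$ and hence $\sz_{i,j}(B)=0$, which (since $B$ has nonnegative entries) kills all entries of $B$ on those diagonals, pinning down the shape. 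Once the shape is fixed, everything else is the elementary linear algebra of two equations and two inequalities indicated above. I would also remark that this lemma is the combinatorial backbone for the inductive computation of $p_{B,A}$ along the poset ideal $\ttz_A$ in the subsequent slice computations.
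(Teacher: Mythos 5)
Your proof is correct and is exactly the argument the paper has in mind: the paper's own proof simply declares the claim ``straightforward by \eqref{dg order}'', i.e.\ by checking $B\preccurlyeq A$ together with equality of dimension vectors, which is precisely the routine verification you carry out in both directions. Your $\sz_{i,j}$-computation pinning down the core shape of any $B\leqs_{\dg}A$ (via $\sz_{i,j}(A)=0$ for $j-i\geqs 3$) is the right way to make the ``straightforward'' step explicit.
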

\begin{proof} The proof is straightforward by \eqref{dg order}. Note also that
$$A_{(t_1,t_2)}\leqs_{\dg}A_{(k_1,k_2)}\iff (t_1,t_2)\leqs (k_1,k_2).\vspace{-4ex}$$
%{\bf Note that the shape is up to $c\geqs d$ or $c<d$.}
\end{proof}

For $cd\neq0$, the poset ideal can be described by its Hasse diagram:
\begin{center}
\begin{tikzpicture}% the picture is case (c,d)=(6,5)
\node [above] at (0,0) {$(c,d)$};
\node [above,right] at (1,-0.8) {$(c,d-1)$};
\node [above,left] at (-1,-0.8) {$(c-1,d)$};
\node [above,left] at (-2,-1.8) {$(c-2,d)$};\node [above] at (0,-1.8) {$(c-1,d-1)$};\node [above,right] at (2,-1.8) {$(c,d-2)$};
\node [right] at (5,-5) {$(c,0)$};\node [left] at (-6,-6) {$(0,d)$};\node [below] at (-1,-11) {$(0,0)$};
\draw[fill] (0,0) circle [radius=0.025];\draw[fill] (1,-1) circle [radius=0.025];\draw[fill] (-1,-1) circle [radius=0.025];
\draw[fill] (-2,-2) circle [radius=0.025];\draw[fill] (0,-2) circle [radius=0.025];\draw[fill] (2,-2) circle [radius=0.025];
\draw[fill] (-3,-3) circle [radius=0.025];\draw[fill] (-1,-3) circle [radius=0.025];\draw[fill] (1,-3) circle [radius=0.025];
\draw[fill] (3,-3) circle [radius=0.025];
\draw (0,0)--(1,-1);\draw (0,0)--(-1,-1);
\draw (-1,-1)--(-2,-2);\draw (-1,-1)--(0,-2);\draw (1,-1)--(0,-2);\draw (1,-1)--(2,-2);
\draw (-2,-2)--(-3,-3);\draw (0,-2)--(-1,-3);\draw (-2,-2)--(-1,-3);\draw (0,-2)--(1,-3);\draw (2,-2)--(1,-3);\draw (2,-2)--(3,-3);
\draw [dotted](-3,-3)--(-6,-6);\draw [dotted](-1,-3)--(-5,-7);\draw [dotted](-5,-5)--(0,-10);\draw[dotted] (-6,-6)--(-1,-11);
\draw [dotted](3,-3)--(5,-5);\draw [dotted](3,-3)--(-3,-9); \draw[dotted] (5,-5)--(-1,-11); \draw [dotted](1,-3)--(4,-6);
\draw [dotted](4,-4)--(-2,-10);\draw [dotted](-3,-3)--(2,-8);\draw [dotted](-1,-3)--(3,-7);
\draw [dotted](1,-3)--(-4,-8);\draw [dotted](-4,-4)--(1,-9);
\end{tikzpicture}
\\
{\bf Figure} $H(c,d)$\\
\end{center}

For $B=A_{(k_1,k_2)}$, by Definition \ref{A'A''}, we have $B'=\begin{psmallmatrix}
    0 & a+c+d-k_1 & 0\\ 0 & 0 &b+c+d-k_2
  \end{psmallmatrix}$ and $B''=\begin{psmallmatrix}
    0 & k_1 & 0\\
    0 & 0 & k_2
  \end{psmallmatrix}$. 
The following follows immediately from Lemma \ref{DuFu}.

\begin{lem}\label{mk1k2}
Putting $\wit{u}_{(k_1,k_2)}=\wit{u}_{A_{(k_1,k_2)}}$ and $m^{(k_1,k_2)}=m^{(A_{(k_1,k_2)})}$, we have
\begin{equation*}
 \begin{split}
m^{(k_1,k_2)}&=\wit{u}_{\begin{psmallmatrix}
    0 & k_1 & 0\\
    0 & 0 & k_2
  \end{psmallmatrix}}\wit{u}_{\begin{psmallmatrix}
    0 & a+c+d-k_1 & 0\\ 0 & 0 &b+c+d-k_2
  \end{psmallmatrix}}\\
  &=\sum_{t_1\leqs k_1,t_2\leqs k_2}v^{(a-b-k_1+k_2+t_1-t_2)(k_1-k_2-t_1+t_2)}\ol{\left[\!\!\left[\begin{matrix} a+c+d-t_1-t_2\\ k_1-t_1\end{matrix}\right]\!\!\right]}\ol{\left[\!\!\left[\begin{matrix} b+c+d-t_1-t_2\\ k_2-t_2\end{matrix}\right]\!\!\right]}\wit{u}_{(t_1,t_2)}.\\
%  &=\sum_{t_1\leqs k_1,t_2\leqs k_2}v^{f'_{A,T}}\begin{bmatrix} a+c+d-t_1-t_2\\ k_1-t_1\end{bmatrix}\begin{bmatrix} b+c+d-t_1-t_2\\ k_2-t_2\end{bmatrix}\wit{u}_{(t_1,t_2)},
 \end{split}
\end{equation*}
%where $f'_{A,T}=(a-b-k_1+k_2+t_1-t_2)(k_1-k_2-t_1+t_2)-(k_1-t_1)(a+c+d-k_1-t_2)-(k_2-t_2)(b+c+d-k_2-t_1)$.
\end{lem}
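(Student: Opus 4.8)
The plan is to deduce the formula from the twisted multiplication formula of Lemma \ref{DuFu}. Granting the factorisation $m^{(k_1,k_2)}=\wit u_{B''}\,\wit u_{B'}$ recorded just before the statement (which comes from running Algorithms \ref{algorithm for strongly periodic} and \ref{algorithm for aperiodic part} on $A_{(k_1,k_2)}$; the hypotheses $c+d\ne0$ and $ab+cd\ne0$ give $a+c+d-k_1\geqs a+d>0$ and $b+c+d-k_2\geqs b+c>0$, so $B'$ is strongly periodic of Loewy length $1$ and the factorisation holds as stated), and noting that $B''=\begin{psmallmatrix}0&k_1&0\\0&0&k_2\end{psmallmatrix}$ is the matrix of the semisimple $k_1S_1\oplus k_2S_2$ so that $\wit u_{B''}=\wit u_{(k_1,k_2)}$, the whole computation reduces to expanding the product $\wit u_{(k_1,k_2)}\,\wit u_{B'}$ by Lemma \ref{DuFu} with $\az=(k_1,k_2)$ and $A=B'$.

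First I would determine which matrices $T$ contribute to that sum. The only nonzero core entries of $A=B'$ are $a_{1,2}=a+c+d-k_1$ and $a_{2,3}=b+c+d-k_2$, so the requirement that each factor $\ol{\left[\!\!\left[\begin{matrix}a_{ij}+t_{ij}-t_{i-1,j}\\ t_{ij}\end{matrix}\right]\!\!\right]}$ be nonzero forces $a_{ij}\geqs t_{i-1,j}$, hence $t_{1,j}=0$ for $j\geqs 4$ and, via the periodicity $t_{0,j}=t_{2,j+2}$, also $t_{2,j}=0$ for $j\geqs5$. Combined with $\row(T)=(k_1,k_2)$, this shows $T$ is determined by $t_1:=t_{1,3}\in[0,k_1]$ and $t_2:=t_{2,4}\in[0,k_2]$, with $t_{1,2}=k_1-t_1$ and $t_{2,3}=k_2-t_2$. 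Computing $\wit T^+$ (its off-diagonal nonzero entries are $\wit t_{1,2}=t_{0,2}=t_2$ and $\wit t_{2,3}=t_{1,3}=t_1$, the diagonal entries of $\wit T$ being discarded) gives
$$A+T-\wit T^+=\begin{psmallmatrix}0 & a+c+d-t_1-t_2 & t_1 & 0\\ 0 & 0 & b+c+d-t_1-t_2 & t_2\end{psmallmatrix}=A_{(t_1,t_2)},$$
so $\wit u_{A+T-\wit T^+}=\wit u_{(t_1,t_2)}$ and the sum is indexed exactly by $0\leqs t_1\leqs k_1$, $0\leqs t_2\leqs k_2$, as in the statement.

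It then remains to read off the coefficient of $\wit u_{(t_1,t_2)}$. In the product $\prod_{1\leqs i\leqs2,\,j>i}\ol{\left[\!\!\left[\begin{matrix}a_{ij}+t_{ij}-t_{i-1,j}\\ t_{ij}\end{matrix}\right]\!\!\right]}$ the factors at $(i,j)=(1,3)$ and $(2,4)$ equal $\ol{\left[\!\!\left[\begin{matrix}t_1\\ t_1\end{matrix}\right]\!\!\right]}=\ol{\left[\!\!\left[\begin{matrix}t_2\\ t_2\end{matrix}\right]\!\!\right]}=1$, the factors at $(1,2)$ and $(2,3)$ equal $\ol{\left[\!\!\left[\begin{matrix}a+c+d-t_1-t_2\\ k_1-t_1\end{matrix}\right]\!\!\right]}$ and $\ol{\left[\!\!\left[\begin{matrix}b+c+d-t_1-t_2\\ k_2-t_2\end{matrix}\right]\!\!\right]}$, and all remaining factors equal $1$; this is the binomial part of the asserted formula. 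For the exponent $f_{A,T}$ of Lemma \ref{DuFu}, each of its four constituent sums collapses, after inserting these entries of $A$ and $T$ and using the periodicity relations, to a sum of just two monomials; writing $u=k_1-t_1$ and $w=k_2-t_2$, these combine and factor as
$$f_{A,T}=(u-w)\big((a+c+d-k_1)-(b+c+d-k_2)+t_1-t_2\big)=(k_1-k_2-t_1+t_2)(a-b-k_1+k_2+t_1-t_2),$$
which is precisely the exponent of $v$ in the statement. Substituting the binomial part and this exponent into Lemma \ref{DuFu} gives the formula. I expect the one genuinely delicate point to be the bookkeeping with the $\bbz\times\bbz$ periodic indexing when computing $\wit T^+$ and the four sums defining $f_{A,T}$; once that is handled the remaining algebra is short and everything factors cleanly.
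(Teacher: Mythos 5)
Your proposal is correct and follows exactly the route the paper intends: the paper offers no written proof beyond the remark that the lemma ``follows immediately from Lemma \ref{DuFu}'', and you have simply carried out that computation, correctly identifying the contributing matrices $T$ (parametrised by $t_{1,3}=t_1$, $t_{2,4}=t_2$), the resulting matrices $A+T-\wit T^+=A_{(t_1,t_2)}$, the two nontrivial Gaussian factors, and the collapse of $f_{A,T}$ to $(k_1-k_2-t_1+t_2)(a-b-k_1+k_2+t_1-t_2)$. I checked the four constituent sums of $f_{A,T}$ with the periodic indexing and your bookkeeping is right, so nothing further is needed.
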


We now compute the canonical basis elements for those $A$ with  $c=0$ or $d=0$ (but not both zero). In other words, $p(A)=1$. We need the following identities for symmetric Gaussian polynomials.
\begin{lem}[{\cite[Section 3.1]{XicanonicalA31999}}]\label{technic-lem-of-gauss-poly}
\begin{enumerate}[\rm(1)]
 \item Assume that $m\geqs k\geqs 0,\dz\in\bbn$. Then
  \begin{equation*}
    \sum_{i=0}^\dz(-1)^iv^{i(m-k)}\begin{bmatrix}
    k-1+i\\k-1
  \end{bmatrix}\begin{bmatrix}
    m\\ \dz-i
  \end{bmatrix}=v^{-k\dz}\begin{bmatrix}
    m-k\\\dz
  \end{bmatrix}.
  \end{equation*}
%and \begin{equation*}
 %   \sum_{i=0}^\dz(-1)^iv^{-i(m-k)}\begin{bmatrix} k-1+i\\k-1\end{bmatrix}\begin{bmatrix}  m\\ \dz-i
 % \end{bmatrix}=v^{k\dz}\begin{bmatrix}  m-k\\\dz\end{bmatrix}.\end{equation*}
\item Assume that $m\geqs k\geqs 0,\dz,n\in\bbn$. Then
  \begin{equation*}
    \sum_{i=0}^\dz(-1)^iv^{i(m-k-n)}\begin{bmatrix}
    k-1+i\\k-1
  \end{bmatrix}\begin{bmatrix}
    m+n\\ \dz-i
  \end{bmatrix}=\sum_{t=0}^{\text{\rm min}\{\dz,n\}}v^{-k(\dz-t)-n\dz+t(m+n)}\begin{bmatrix}
    m-k\\\dz-t
  \end{bmatrix}\begin{bmatrix}
    n\\t
  \end{bmatrix}.
  \end{equation*}
\end{enumerate}
\end{lem}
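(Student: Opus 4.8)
The plan is to treat both identities as instances of the $q$-Chu--Vandermonde convolution for Gaussian polynomials. It will be convenient to pass between the symmetric Gaussian polynomial $\begin{bmatrix}N\\t\end{bmatrix}$ and the ordinary one via $\begin{bmatrix}N\\t\end{bmatrix}=v^{-t(N-t)}\left[\!\!\left[N\atop t\right]\!\!\right]$ and to carry out the exponent arithmetic with the ordinary polynomials $\left[\!\!\left[N\atop t\right]\!\!\right]=\binom{N}{t}_{v^2}$, re-normalising only at the end; the only real labour will be tracking powers of $v$.

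For part (1), the key observation will be that the combination $(-1)^iv^{i(m-k)}\begin{bmatrix}k-1+i\\k-1\end{bmatrix}$ is, up to an explicit power of $v$, a Gaussian binomial with negative top entry: one has $\left[\!\!\left[k-1+i\atop k-1\right]\!\!\right]=(-1)^iv^{2ik+i(i-1)}\left[\!\!\left[-k\atop i\right]\!\!\right]$, the $q$-analogue of $\binom{k-1+i}{i}=(-1)^i\binom{-k}{i}$. Substituting this, the left-hand side of (1) takes the shape $v^{c_1}\sum_i v^{c_2i}\left[\!\!\left[-k\atop i\right]\!\!\right]\left[\!\!\left[m\atop \delta-i\right]\!\!\right]$ for suitable integers $c_1,c_2$ depending on $m,k,\delta$, and I would then invoke the $q$-Chu--Vandermonde identity $\sum_i v^{\ldots}\left[\!\!\left[r\atop i\right]\!\!\right]\left[\!\!\left[s\atop \delta-i\right]\!\!\right]=\left[\!\!\left[r+s\atop \delta\right]\!\!\right]$ (with an appropriate power of $v$ in each summand; this is a polynomial identity in $r$, hence valid at $r=-k$) to collapse the sum to a multiple of $\left[\!\!\left[m-k\atop\delta\right]\!\!\right]$. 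Re-normalising and comparing the accumulated power of $v$ with $v^{-k\delta}$ then gives (1). (Alternatively, (1) can be proved by a short induction on $\delta$ using the Pascal recurrence $\begin{bmatrix}N\\t\end{bmatrix}=v^{t}\begin{bmatrix}N-1\\t\end{bmatrix}+v^{-(N-t)}\begin{bmatrix}N-1\\t-1\end{bmatrix}$ and its companion, applied to the two binomial factors, but the Vandermonde route is cleaner.)

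For part (2) I would reduce to part (1). In the left-hand side of (2), expand the single factor $\begin{bmatrix}m+n\\\delta-i\end{bmatrix}$ by the symmetric $q$-Vandermonde convolution
\begin{equation*}
\begin{bmatrix}m+n\\\delta-i\end{bmatrix}=\sum_{t\geqs0}v^{t(m+n)-(\delta-i)n}\begin{bmatrix}m\\\delta-i-t\end{bmatrix}\begin{bmatrix}n\\t\end{bmatrix},
\end{equation*}
substitute into the double sum and interchange the order of summation. For each fixed $t$ the factor $v^{in}$ produced by this expansion cancels the $v^{-in}$ hidden in $v^{i(m-k-n)}$, so the inner sum over $i$ becomes exactly $\sum_i(-1)^iv^{i(m-k)}\begin{bmatrix}k-1+i\\k-1\end{bmatrix}\begin{bmatrix}m\\(\delta-t)-i\end{bmatrix}$, i.e. the left-hand side of (1) with $\delta$ replaced by $\delta-t$; by (1) it equals $v^{-k(\delta-t)}\begin{bmatrix}m-k\\\delta-t\end{bmatrix}$. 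Collecting the remaining $i$-independent powers $v^{t(m+n)-\delta n}$ and $v^{-k(\delta-t)}$ reproduces the exponent $-k(\delta-t)-n\delta+t(m+n)$ on the right of (2), and the range $0\leqs t\leqs\min\{\delta,n\}$ emerges because $\begin{bmatrix}n\\t\end{bmatrix}$ vanishes for $t>n$ while the surviving factor $\begin{bmatrix}m-k\\\delta-t\end{bmatrix}$ forces $t\leqs\delta$.

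The combinatorial skeleton -- negative-argument Vandermonde for (1), a Vandermonde expansion plus an interchange of summations for (2) -- is routine, and (as the cancellation of $v^{\pm in}$ above shows) in the symmetric normalisation the powers of $v$ do telescope exactly. The main obstacle will be the exponent bookkeeping: one must pin down the correct form of the symmetric $q$-Vandermonde identity (the exponent $t(m+n)-(\delta-i)n$ above) and the precise relation between $\left[\!\!\left[k-1+i\atop k-1\right]\!\!\right]$ and $\left[\!\!\left[-k\atop i\right]\!\!\right]$, and then check that all collected powers of $v$ collapse to exactly the exponents in the statement. Since the result is in any case attributed to \cite{XicanonicalA31999}, one could also simply cite it, but the argument above keeps the paper self-contained.
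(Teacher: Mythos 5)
The paper does not actually prove this lemma: it is quoted verbatim from Xi \cite[\S3.1]{XicanonicalA31999}, so there is no internal proof to compare against, and your proposal supplies a correct self-contained substitute. Both key steps check out. For (1), your identity $\left[\!\!\left[{k-1+i\atop i}\right]\!\!\right]=(-1)^iv^{2ik+i(i-1)}\left[\!\!\left[{-k\atop i}\right]\!\!\right]$ is right, and after unbalancing the symmetric binomials the left-hand side becomes $v^{-\dz(m-\dz)}\sum_i v^{2i(m+i-\dz)}\left[\!\!\left[{-k\atop i}\right]\!\!\right]\left[\!\!\left[{m\atop \dz-i}\right]\!\!\right]$; the $q$-Chu--Vandermonde convolution $\left[\!\!\left[{a+b\atop \dz}\right]\!\!\right]=\sum_i v^{2i(a-\dz+i)}\left[\!\!\left[{a\atop \dz-i}\right]\!\!\right]\left[\!\!\left[{b\atop i}\right]\!\!\right]$ (a polynomial identity in $v^{2b}$, hence valid at $b=-k$) collapses this to $v^{-\dz(m-\dz)}\left[\!\!\left[{m-k\atop \dz}\right]\!\!\right]=v^{-k\dz}\begin{bmatrix}m-k\\ \dz\end{bmatrix}$, so the exponent bookkeeping you flag as the main obstacle does close up exactly. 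For (2), the symmetric Vandermonde exponent $t(m+n)-(\dz-i)n$ is precisely what one gets on rebalancing the standard form, the factor $v^{in}$ it produces cancels the $v^{-in}$ inside $v^{i(m-k-n)}$ as you say, and the interchange of summation reduces the inner sum to (1) with $\dz$ replaced by $\dz-t$, reproducing the stated exponent $-k(\dz-t)-n\dz+t(m+n)$ and range. What your route buys over the authors' citation is self-containedness at the cost of half a page; either choice is legitimate, but your argument is sound as written once the part (1) computation above is filled in.
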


We now perform Algorithm \ref{another const for hall alg} to compute the slice $\mathscr{C}(\widehat{\mathfrak{gl}}_2)_{(2,1)}$. In this case, the recursively constructed subset $'\ttz_{\prec A}$ in the Algorithm \ref{another const for hall alg} is $'\ttz_{\prec A}=\ttz_{\prec A}$.

\begin{thm}\label{theorem for case (2,1)}
If $A\in\ttz_\vartri^+(2)$ with $(\ell(A),p(A))=(2,1)$, then $A$ is of the form
\begin{equation*}
  \begin{pmatrix}
    0 & a & c\\0 & 0 & b
  \end{pmatrix}\quad\text{ or }\quad 
  \begin{pmatrix}
    0 & a & 0 & 0\\0 & 0 & b & d
  \end{pmatrix}~(a,b,c,d\in\bbn_{\geqs1}).
\end{equation*}
\begin{enumerate}[\rm(1)]
\item For $A= \begin{pmatrix}
    0 & a & c\\0 & 0 & b
  \end{pmatrix}$, $\scc_A=m^{(A)}$ is a tight monomial if and only if $a\leqs b$. The canonical basis element associated to $A$ with $a>b$ has the form
$$\scc_{A}=\sum_{k=0}^c(-1)^{c-k}\begin{bmatrix}
  a-b-1+c-k\\a-b-1
\end{bmatrix}m^{(k,0)}=\sum_{t=0}^cv^{-t(a+t)}\begin{bmatrix}
  b+t\\t
\end{bmatrix}\wit{u}_{(c-t,0)},$$
where $\wit{u}_{(k,0)}=\wit{u}_{A_{(k,0)}}$ and $A_{(k,0)}=\begin{pmatrix}
    0 & a+c-k & k\\0 & 0 & b+c-k
  \end{pmatrix}.$
\item For $A=\begin{pmatrix}
    0 & a & 0 & 0\\0 & 0 & b & d
  \end{pmatrix}$, $\scc_A=m^{(A)}$ is a tight monomial if and only if $a\geqs b$. The canonical basis element associated to $A$ with $a<b$ has the form
$$\scc_{A}=\sum_{l=0}^d(-1)^{d-l}\begin{bmatrix}
  b-a-1+d-l\\b-a-1
\end{bmatrix}m^{(0,l)}=\sum_{t=0}^dv^{-t(b+t)}\begin{bmatrix}
  a+t\\t
\end{bmatrix}\wit{u}_{(0,d-t)},$$
where $\wit{u}_{(0,l)}=\wit{u}_{A_{(0,l)}}$ and $A_{(0,l)}=\begin{pmatrix}
    0 & a+d-l & 0&0\\0 & 0 & b+d-l & l
  \end{pmatrix}.$
\end{enumerate}
\end{thm}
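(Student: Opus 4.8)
The plan is to reduce part (2) to part (1) and then run the construction underlying Algorithm~\ref{another const for hall alg}. For the reduction, the graph automorphism of $\ddz(2)$ exchanging the two vertices (and the two arrows) induces a $\bbq(v)$-algebra automorphism $\tau$ of $\fkH_\vartri(2)$ with $\tau(\wit{u}_B)=\wit{u}_{B^\dagger}$, where $B^\dagger$ is obtained by swapping $S_1[l]\leftrightarrow S_2[l]$ (the shift $\dz(B)$ is unchanged). One checks that $\tau$ preserves the Euler form, intertwines the bar involution (both fix the monomial generators $\wit{u}_{e\bm a}$, and $\tau$ sends them to monomial generators), and is a poset automorphism of $(\ttz_\vartri^+(2),\leqs_{\dg})$; hence $\tau$ permutes the canonical basis. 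Since $\tau$ carries $\begin{psmallmatrix}0&a&0&0\\0&0&b&d\end{psmallmatrix}$ to $\begin{psmallmatrix}0&b&d\\0&0&a\end{psmallmatrix}$ and the condition $a\geqs b$ in (2) to ``$a'\leqs b'$'' in (1), part~(2) follows from part~(1). So fix $A=\begin{psmallmatrix}0&a&c\\0&0&b\end{psmallmatrix}$ with $a,b,c\geqs1$; this is the matrix $A_{(c,0)}$ of Lemma~\ref{poset ideal1} with $d=0$, and $\ttz_A$ is the chain $\{A_{(k,0)}\mid 0\leqs k\leqs c\}$ with $A_{(t,0)}\leqs_{\dg}A_{(k,0)}\iff t\leqs k$.

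Next I would pin down the $\wit{u}$-expansions of the monomials. Specialising Lemma~\ref{mk1k2} to $k_2=0$ (which forces $t_2=0$, so the second Gaussian factor is $1$) and $d=0$, and using $\ol{\left[\!\!\left[\begin{matrix}N\\m\end{matrix}\right]\!\!\right]}=v^{-m(N-m)}\begin{bmatrix}N\\m\end{bmatrix}$, one gets
\[
m^{(k,0)}=\sum_{t=0}^{k}v^{-(k-t)(b+c-t)}\begin{bmatrix}a+c-t\\k-t\end{bmatrix}\wit{u}_{(t,0)}\qquad(0\leqs k\leqs c),
\]
with $\wit{u}_{(t,0)}=\wit{u}_{A_{(t,0)}}$. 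In particular $m^{(A)}=m^{(c,0)}$ has $\wit{u}_A$-coefficient $1$, while for $t<c$ the coefficient of $\wit{u}_{(t,0)}$ has top $v$-degree $-(c-t)(b+c-t)+(c-t)a=(c-t)(a-b-c+t)$; writing $u=c-t\in\{1,\dots,c\}$ this is $u(a-b-u)$, which is $<0$ for all such $u$ exactly when $a-b\leqs0$. Since $m^{(A)}$ is bar-invariant, the uniqueness of the canonical basis relative to the PBW-type basis $\{\wit{u}_A\}$, the bar involution and $(\ttz_\vartri^+(2),\leqs_{\dg})$ (recalled before Algorithm~\ref{another const for hall alg}) gives $\scc_A=m^{(A)}$ precisely when $a\leqs b$; and when $a>b$ the $\wit{u}_{(c-1,0)}$-coefficient of $m^{(A)}$, of top degree $a-b-1\geqs0$, lies outside $v^{-1}\bbz[v^{-1}]$, so $m^{(A)}$ is not tight.

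For $a>b$ I would verify the closed formula directly. Put $x:=\sum_{k=0}^{c}(-1)^{c-k}\begin{bmatrix}a-b-1+c-k\\a-b-1\end{bmatrix}m^{(k,0)}$; this is bar-invariant since each $m^{(k,0)}$ is and the symmetric Gaussian binomials are. Substituting the expansion above and interchanging the two sums, the coefficient of $\wit{u}_{(s,0)}$ in $x$ is $\sum_{k=s}^{c}(-1)^{c-k}\begin{bmatrix}a-b-1+c-k\\a-b-1\end{bmatrix}v^{-(k-s)(b+c-s)}\begin{bmatrix}a+c-s\\k-s\end{bmatrix}$. Setting $i=k-s$, $\delta=c-s$, $m=a+c-s$ and $\kappa=a-b-1\geqs0$, this becomes $\sum_{i=0}^{\delta}(-1)^{\delta-i}\begin{bmatrix}\kappa+\delta-i\\\kappa\end{bmatrix}v^{-i(m-\kappa-1)}\begin{bmatrix}m\\i\end{bmatrix}$; re-indexing by $j=\delta-i$, pulling out $v^{-\delta(m-\kappa-1)}$, and applying Lemma~\ref{technic-lem-of-gauss-poly}(1) with its ``$k$'' taken to be $\kappa+1=a-b$ (valid since $m\geqs\kappa+1\geqs1$), the sum collapses to $v^{-\delta m}\begin{bmatrix}m-\kappa-1\\\delta\end{bmatrix}=v^{-(c-s)(a+c-s)}\begin{bmatrix}b+c-s\\c-s\end{bmatrix}$. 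With $t=c-s$ this says precisely $x=\sum_{t=0}^{c}v^{-t(a+t)}\begin{bmatrix}b+t\\t\end{bmatrix}\wit{u}_{(c-t,0)}$, the second displayed formula. From this form the $\wit{u}_A$-coefficient ($t=0$) is $1$, and for $t\geqs1$ the coefficient $v^{-t(a+t)}\begin{bmatrix}b+t\\t\end{bmatrix}$ has top $v$-degree $-t(a+t)+tb=t(b-a-t)\leqs-t(t+1)<0$ because $a\geqs b+1$; so $x$ meets the defining triangularity, and uniqueness of the canonical basis forces $\scc_A=x$.

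The one genuinely technical point is the collapse in the third paragraph: choosing the index substitutions so that the inner sum becomes an instance of Lemma~\ref{technic-lem-of-gauss-poly}(1), and tracking the $v$-powers carefully, while keeping straight which binomials are the plain Gaussian polynomials $\left[\!\!\left[\begin{matrix}\cdot\\\cdot\end{matrix}\right]\!\!\right]$ coming out of Lemma~\ref{mk1k2} and which are the bar-invariant symmetric ones $\begin{bmatrix}\cdot\\\cdot\end{bmatrix}$ in the statement. Everything else is bookkeeping with the (totally ordered) poset ideal $\ttz_A$, with the degrees of Gaussian polynomials, and with the uniqueness property of the canonical basis.
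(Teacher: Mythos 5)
Your proof is correct and follows essentially the same route as the paper's: the same bar-invariant alternating combination $\sum_k(-1)^{c-k}\bigl[\begin{smallmatrix}a-b-1+c-k\\ a-b-1\end{smallmatrix}\bigr]m^{(k,0)}$, the same collapse of the inner sum via Lemma \ref{technic-lem-of-gauss-poly}(1) with $k=a-b$, and the same appeal to uniqueness of the canonical basis on the totally ordered ideal $\ttz_A$. The only (harmless) divergence is that you derive part (2) from part (1) by the explicit quiver automorphism swapping the two vertices, where the paper simply declares the proof of (2) "similar".
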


\begin{proof}We only prove (1); the proof for (2) is similar. In this case, Hasse diagram $H(c,0)$ is a linear figure. In other words, we have $A=A_{(c,0)}>_{\dg}A_{(c-1,0)}>_{\dg}\cdots>_{\dg}A_{(1,0)}>_{\dg}A_{(0,0)}$. Note in this case that 
$A'=\begin{pmatrix}
    0 & a & 0\\ 0 & 0 &b+c
  \end{pmatrix}$ and $A''=\begin{pmatrix}
    0 & c\\
    0 & 0
  \end{pmatrix}$. Thus,
 $$ m^{(A)}=m^{(A'')}m^{(A')}=\wit{u}_{A''}\wit{u}_{A'}.$$
We now apply formula in Lemma \ref{DuFu}. We have here $\alpha=(c,0)$. It $T\in\ttz_\vartri^+(2)$ satisfying $A'-T+\widetilde T^+\in\ttz_\vartri^+(2)$ and $\row(T)=\alpha$, then $T=\begin{pmatrix}
    0 & c-t & t\\ 0 & 0 &0
  \end{pmatrix}$ for some $0\leq t\leq c$. Thus, $A'-T+\widetilde T^+=A_{(t,0)}$ and 
  $$f_{A',T}=a(c-t)-(b+c)(c-t)+c(c-t)=(c-t)(a-b-c+t).$$
Hence,
\begin{equation*}
 \begin{split}
  m^{(A)}&=\sum_{0\leqs t\leqs c}v^{(c-t)(a-b-c+t)}\ol{\left[\!\!\left[\begin{matrix} a+c-t\\ c-t\end{matrix}\right]\!\!\right]}\wit{u}_{(t,0)}=\wit{u}_{A}+\sum_{0\leqs t\leqs c-1}v^{(c-t)(a-b-c+t)}\ol{\left[\!\!\left[\begin{matrix} a+c-t\\ c-t\end{matrix}\right]\!\!\right]}\wit{u}_{(t,0)}.
 \end{split}
\end{equation*}
Consequently, $m^{(A)}$ becomes a canonical basis element (or a tight monomial) if $a\leqs b$.

By the calculation above, we have, for $k=0,1,2,\cdots,c$, $A_{(k,0)}=\begin{psmallmatrix}
    0 & a+c-k & k\\0 & 0 & b+c-k
  \end{psmallmatrix}$ and so, by Lemma~\ref{mk1k2},
\begin{equation*}
\begin{split}
  m^{(k,0)}=\wit{u}_{\begin{psmallmatrix}
    0 & k\\
    0 & 0
  \end{psmallmatrix}}\wit{u}_{\begin{psmallmatrix}
    0 & a+c-k & 0\\ 0 & 0 &b+c
  \end{psmallmatrix}}&=\sum_{0\leqs t\leqs k}v^{(k-t)(a+t-b-k)}\ol{\left[\!\!\left[\begin{matrix} a+c-t\\ k-t\end{matrix}\right]\!\!\right]}\wit{u}_{(t,0)}\\
  &=\sum_{0\leqs t\leqs k}v^{(k-t)(t-b-c)}\left[\begin{matrix} a+c-t\\ k-t\end{matrix}\right]\wit{u}_{(t,0)}.
\end{split}
\end{equation*}
Assume now $a>b$ and consider the following bar fixed sum
\begin{equation*}
  \begin{split}
 M(c)  &:=\sum_{k=0}^c(-1)^{c-k}\begin{bmatrix}
  a-b-1+c-k\\a-b-1
  \end{bmatrix}m^{(k,0)}\\
  &=\sum_{k=0}^c(-1)^{c-k}\begin{bmatrix}
  a-b-1+c-k\\a-b-1
\end{bmatrix}\bigg(\sum_{t=0}^kv^{(k-t)(t-b-c)}\begin{bmatrix}
  a+c-t\\k-t
\end{bmatrix}\wit{u}_{(t,0)}\bigg)\\
&=\sum_{k=0}^c\sum_{t=0}^k(-1)^{c-k}v^{(k-t)(t-b-c)}\begin{bmatrix}
  a-b-1+c-k\\a-b-1
\end{bmatrix}\begin{bmatrix}
  a+c-t\\k-t
\end{bmatrix}\wit{u}_{(t,0)}\\
&=\sum_{t=0}^c\bigg(\sum_{k=t}^c(-1)^{c-k}v^{(k-t)(t-b-c)}\begin{bmatrix}
  a-b-1+c-k\\a-b-1
\end{bmatrix}\begin{bmatrix}
  a+c-t\\k-t
\end{bmatrix}\bigg)\wit{u}_{(t,0)}\\
&=\wit{u}_{A}+\sum_{t=0}^{c-1}\bigg(\sum_{k=t}^c(-1)^{c-k}v^{(k-t)(t-b-c)}\begin{bmatrix}
  a-b-1+c-k\\a-b-1
\end{bmatrix}\begin{bmatrix}
  a+c-t\\k-t
\end{bmatrix}\bigg)\wit{u}_{(t,0)}.\\
 \end{split}
\end{equation*}
However, for fixed $t$, 
$$\aligned
f_{(t,0)}&:=\sum_{k=t}^c(-1)^{c-k}v^{(k-t)(t-b-c)}\begin{bmatrix}
  a-b-1+c-k\\a-b-1
\end{bmatrix}\begin{bmatrix}
  a+c-t\\k-t
\end{bmatrix}\\
&=\sum_{k'=0}^{c'}(-1)^{c'-k'}v^{-k'(b+c')}\begin{bmatrix}
  a-b-1+c'-k'\\a-b-1
\end{bmatrix}\begin{bmatrix}
  a+c'\\k'
\end{bmatrix}\;(c'=c-t, k'=k-t)\\
&=v^{-c'(b+c')}\sum_{i=0}^{c'}(-1)^{i}v^{i(b+c')}\begin{bmatrix}
  a-b-1+i\\a-b-1
\end{bmatrix}\begin{bmatrix}
  a+c'\\c'-i
\end{bmatrix}\;(i=c'-k').
\endaligned$$
Let $k=a-b$, $m=a+c'$ and $\delta=c'$. Applying Lemma \ref{technic-lem-of-gauss-poly}(1) gives
$$f_{(t,0)}=v^{-c'(b+c')}v^{-c'(a-b)}\begin{bmatrix}
  b+c'\\c'
\end{bmatrix}=v^{-c'(a+c')}\begin{bmatrix}
  b+c'\\c'
\end{bmatrix}=v^{-c'(a-b+c')}\ol{\left[\!\!\left[\begin{matrix}
  b+c'\\c'
\end{matrix}\right]\!\!\right]}\in v^{-1}\bbz[v^{-1}],
$$
since $a>b$.
Hence, $
M(c)\in \wit{u}_{A}+\sum_{t=0}^{c-1}v^{-1}\bbz[v^{-1}]\wit{u}_{(t,0)}.$ On the other hand, $\ol{M(c)}=M(c)$. Consequently, $\scc_A=M(c)$, as desired.
\end{proof}

\section{Computing the slice $\mathscr{C}(\widehat{\mathfrak{gl}}_2)_{(2,2)}$}

In the last part of this section, we show the canonical basis associated to the matrix $A=\begin{pmatrix}
  0 & a & c & 0\\
  0 & 0 & b & d
\end{pmatrix}$ with $\ell(A)=2=p(A)$ and $a,b,c,d\in\bbn$. Thus, $cd\neq0$.

\begin{thm}\label{slice22}
Maintain the notation as set in Lemmas \ref{poset ideal1} and \ref{mk1k2}.
Suppose $A=\begin{pmatrix}
  0 & a & c & 0\\
  0 & 0 & b & d
\end{pmatrix}\in\ttz_\vartri^+(2)$ with $\ell(A)=2=p(A)$ and $a,b,c,d\in\bbn$. Then the canonical basis element $\scc_A$ associated to $A$ is given as follows.
\begin{enumerate}[\rm(1)]
\item If $a=b$, then 
$$\scc_A=m^{(c,d)}-m^{(c-1,d-1)}.$$
\item If $a>b$, then
\begin{equation*}
  \scc_A=\sum_{k_1=0}^c(-1)^{c-k_1}\begin{bmatrix}
  a-b-1+c-k_1\\a-b-1
\end{bmatrix}m^{(k_1,d)}-\sum_{l_1=0}^{c-1}(-1)^{c-1-l_1}\begin{bmatrix}
  a-b-2+c-l_1\\a-b-1
\end{bmatrix}m^{(l_1,d-1)}.
\end{equation*}
\item If $a<b$, then
\begin{equation*}
  \scc_A=\sum_{k_1=0}^d(-1)^{d-k_1}\begin{bmatrix}
  b-a-1+d-k_1\\b-a-1
\end{bmatrix}m^{(c,k_1)}-\sum_{l_1=0}^{d-1}(-1)^{d-1-l_1}\begin{bmatrix}
  b-a-2+d-l_1\\b-a-1
\end{bmatrix}m^{(c-1,l_1)}.
\end{equation*}
\end{enumerate}
\end{thm}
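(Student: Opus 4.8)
The plan is to verify the three displayed identities directly: expand each right-hand side via Lemma~\ref{mk1k2}, check that the result is bar-invariant and equals $\wit u_A$ plus a $v^{-1}\bbz[v^{-1}]$-combination of strictly smaller $\wit u_{(t_1,t_2)}$, and then conclude by the uniqueness built into Algorithm~\ref{another const for hall alg}. I would first record several reductions. Since $A=A_{(c,d)}$, Lemma~\ref{mk1k2} gives, for every $(k_1,k_2)\leqs(c,d)$, the expansion of $m^{(k_1,k_2)}$ over $\{\wit u_{(t_1,t_2)}\mid(t_1,t_2)\leqs(k_1,k_2)\}$ with leading term $\wit u_{(k_1,k_2)}$ (and $m^{(A)}=m^{(c,d)}$), while $\ttz_A$ is the grid of Lemma~\ref{poset ideal1}. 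Each $m^{(k_1,k_2)}$ is bar-invariant and the scalar coefficients occurring in the three formulas are symmetric Gaussian binomials, hence bar-invariant, so each right-hand side is bar-invariant; and by inspection of leading terms the coefficient of $\wit u_{(c,d)}$ equals $1$ in all three cases. Finally, the rotation automorphism of $\Delta(2)$ induces an algebra automorphism of $\fkH_\vartri(2)$ fixing the bar involution, respecting $\leqs_\dg$, and carrying $\wit u_{(t_1,t_2)}$ and $m^{(k_1,k_2)}$ with parameters $(a,b,c,d)$ to the corresponding objects with parameters $(b,a,d,c)$ and coordinates swapped; hence part (3) (the case $a<b$) is the image of part (2) under this automorphism, and I may assume $a\geqs b$.

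\textbf{Case $a=b$.} By Lemma~\ref{mk1k2} the exponent of $v$ in the coefficient of $\wit u_{(t_1,t_2)}$ in $m^{(c,d)}$, and also in $m^{(c-1,d-1)}$, collapses to $-(c-d-t_1+t_2)^2\leqs0$. Since every barred Gaussian polynomial lies in $\bbz[v^{-1}]$ with constant term $1$, these coefficients lie in $\bbz[v^{-1}]$ with constant term $1$ exactly along the antidiagonal $t_1-t_2=c-d$ inside the relevant box and $0$ elsewhere. Removing the corner, that antidiagonal inside $[0,c]\times[0,d]$ equals the one inside $[0,c-1]\times[0,d-1]$, so the constant-term-$1$ parts of $m^{(c,d)}$ and $m^{(c-1,d-1)}$ cancel at every $(t_1,t_2)<(c,d)$, leaving $m^{(c,d)}-m^{(c-1,d-1)}\in\wit u_A+\sum_{(t_1,t_2)<(c,d)}v^{-1}\bbz[v^{-1}]\,\wit u_{(t_1,t_2)}$. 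With bar-invariance, this is $\scc_A$.

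\textbf{Case $a>b$.} Write the claimed element as $P_d-Q_{d-1}$, the two sums of the statement. Substituting Lemma~\ref{mk1k2} and reindexing $k_1=c-i$ in $P_d$ and $l_1=c-1-i$ in $Q_{d-1}$, the two resulting inner sums over $i$ share the same sign $(-1)^i$, the same leading Gaussian factor (in $i$ and $a-b-1$), and the same power of $v$, differing only in the range of $i$ and by a shift of $1$ in the remaining Gaussian factor and in the outer barred Gaussian polynomial (index $d-t_2$ versus $d-1-t_2$). For $t_2=d$ the outer factor is $1$ and only $P_d$ contributes, and the inner sum is exactly the sum $f_{(t,0)}$ from the proof of Theorem~\ref{theorem for case (2,1)}(1), which Lemma~\ref{technic-lem-of-gauss-poly}(1) collapses (using $a>b$) to an element of $v^{-1}\bbz[v^{-1}]$, and to $1$ when $t_1=c$. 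For $t_2<d$, after converting the barred Gaussian polynomials to symmetric ones the linear-in-$i$ part of the exponent equals $(a+c+d-t_1-t_2)-(a-b)-2(d-t_2)$, matching Lemma~\ref{technic-lem-of-gauss-poly}(2) with $k=a-b$, $m+n=a+c+d-t_1-t_2$ and $n=d-t_2$; applying part (2) to the inner sum of $P_d$, and (with $n$ replaced by $d-1-t_2$) to that of $Q_{d-1}$, turns each into a short sum over $t\leqs\min\{c-t_1,d-t_2\}$, resp.\ $\min\{c-1-t_1,d-1-t_2\}$. I would then compare these term by term in $t$, using the conversion between the two forms of the Gaussian polynomial and the $q$-Pascal recursion to reconcile the two outer factors, and check that the $Q_{d-1}$-part cancels exactly the non-negative-degree part of the $P_d$-part. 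This yields $P_d-Q_{d-1}\in\wit u_A+\sum_{(t_1,t_2)<(c,d)}v^{-1}\bbz[v^{-1}]\,\wit u_{(t_1,t_2)}$, so $\scc_A=P_d-Q_{d-1}$; part (3) then follows by the symmetry above.

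\textbf{Main obstacle.} It is precisely that final reconciliation when $a>b$ and $t_2<d$: after the two applications of Lemma~\ref{technic-lem-of-gauss-poly}(2) one is left with a genuinely multiply-indexed sum, and verifying that every term carrying a non-negative power of $v$ cancels between the $P_d$- and $Q_{d-1}$-contributions is the delicate combinatorial book-keeping. Everything else---the case $a=b$, the slice $t_2=d$, bar-invariance, and the reductions---is routine.
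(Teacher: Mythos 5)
Your overall strategy coincides with the paper's: expand everything via Lemma~\ref{mk1k2}, observe bar-invariance of the proposed combinations, and verify the triangularity condition $\scc_A\in\wit u_A+\sum_{(t_1,t_2)<(c,d)}v^{-1}\bbz[v^{-1}]\wit u_{(t_1,t_2)}$. Your treatment of case (1) (the exponent collapsing to $-(c-d-t_1+t_2)^2$ and the cancellation of the constant terms along the antidiagonal) and of the two boundary slices of case (2) (the coefficient $f^{(c,d)}_{(c,t_2)}$ computed directly, and $f^{(c,d)}_{(t_1,d)}$ collapsed by Lemma~\ref{technic-lem-of-gauss-poly}(1) exactly as in Theorem~\ref{theorem for case (2,1)}) is correct and is what the paper does. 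Your reduction of (3) to (2) by the rotation automorphism of $\Delta(2)$ is a legitimate shortcut the paper does not use, but it is sound.

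However, there is a genuine gap precisely where you flag the ``main obstacle'': the interior coefficients $g^{(c,d)}_{(t_1,t_2)}=f^{(c,d)}_{(t_1,t_2)}-f^{(c-1,d-1)}_{(t_1,t_2)}$ for $(0,0)<(t_1,t_2)\ll(c,d)$. Your plan --- apply Lemma~\ref{technic-lem-of-gauss-poly}(2) to both inner sums for every such $(t_1,t_2)$ and then cancel term by term in the resulting doubly-indexed sums --- is exactly the computation you do not carry out, and it is the entire content of this case; asserting that ``the $Q_{d-1}$-part cancels exactly the non-negative-degree part of the $P_d$-part'' is the claim to be proved, not a step of the proof. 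The idea you are missing is that one need not do this for general $(t_1,t_2)$ at all: a direct reindexing of the defining sums shows the shift recursion $g^{(c'+1,d'+1)}_{(t_1,t_2)}=g^{(c'+1,d')}_{(t_1,t_2-1)}$ for $t_2\geqs1$ and $g^{(c'+1,d'+1)}_{(t_1,0)}=g^{(c',d'+1)}_{(t_1-1,0)}$ for $t_1\geqs1$, whence $g^{(c,d)}_{(t_1,t_2)}=g^{(c-t_1,d-t_2)}_{(0,0)}$, and only the single corner coefficient $g^{(c,d)}_{(0,0)}$ needs to be estimated. That one coefficient is then handled by applying identity~\eqref{identity2} to each of the two sums, shifting the index in the second, pairing the resulting terms, and checking (separately for $c\leqs d$ and $c>d$, using $a>b$) that the surviving exponents of $v$ are negative --- this is Lemma~\ref{coeff of (0,0) for a>b} of the Appendix, and without it or an equivalent completed computation the argument for cases (2) and (3) is incomplete.
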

We may see the symmetry of the three cases from the big diamond Figure $H(c,d)$:  The recursively constructed subset in Algorithm \ref{another const for hall alg} has the form:
$$'\ttz_{\prec A}=\begin{cases}\{A_{(c-1,d-1)}\},&\text{ in (1)};\\
\{A_{(i,d)},A_{(j,d-1)}\mid 0\leqs i,j\leqs c, i<c\},&\text{ in (2)};\\
\{A_{(c,i)},A_{(c-1,j)}\mid 0\leqs i,j\leqs d, i<d\},&\text{ in (3)}.
\end{cases}$$
\begin{proof}
We first prove (1) and thus assume $a=b$. Then the formula in Lemma \ref{mk1k2} with $(k_1,k_2)=(c,d)$ becomes
\begin{equation*}
\begin{split}
 m^{(c,d)}=&\sum_{t_1\leqs c,t_2\leqs d}v^{-(c-d-t_1+t_2)^2}\ol{\left[\!\!\left[\begin{matrix} a+c+d-t_1-t_2\\ c-t_1\end{matrix}\right]\!\!\right]}\ol{\left[\!\!\left[\begin{matrix} a+c+d-t_1-t_2\\ d-t_2\end{matrix}\right]\!\!\right]}\wit{u}_{(t_1,t_2)}\\
 =&\sum_{\substack{t_1\leqs c,t_2\leqs d\\c-t_1=d-t_2}}\ol{[\![\begin{smallmatrix} a+c+d-t_1-t_2\\ c-t_1\end{smallmatrix}]\!]}^2\wit{u}_{(t_1,t_2)}+\sum_{\substack{t_1\leqs c,t_2\leqs d\\c-t_1\neq d-t_2}}v^{-(c-t_1-d+t_2)^2}\ol{[\![\begin{smallmatrix} a+c+d-t_1-t_2\\ c-t_1\end{smallmatrix}]\!]}\ol{[\![\begin{smallmatrix} a+c+d-t_1-t_2\\ d-t_2\end{smallmatrix}]\!]}\wit{u}_{(t_1,t_2)}.
\end{split}
\end{equation*}
Since $\ol{[\![\begin{smallmatrix} a+c+d-t_1-t_2\\ c-t_1\end{smallmatrix}]\!]}^2-1\in v^{-1}\bbz[v^{-1}]$ ($=0$ if $(t_1,t_2)=(c,d)$) and the coefficients in the second sum are all in $v^{-1}\bbz[v^{-1}]$, it follows that
\begin{equation*}
 m^{(c,d)}=\begin{cases}\wit{u}_{(c,d)}+\wit{u}_{(c-1,d-1)}+\cdots+\wit{u}_{(c-d,0)}+X,&\text{ if }c\geq d;\\
 \wit{u}_{(c,d)}+\wit{u}_{(c-1,d-1)}+\cdots+\wit{u}_{(0,d-c)}+Y,&\text{ if }c<d,
 \end{cases}
\end{equation*}
where $X,Y\in \sum_{(t_1,t_2)<(c,d)}v^{-1}\bbz[v^{-1}]\wit{u}_{(t_1,t_2)}.$

Similarly, we have
\begin{equation*}
\begin{split}
 m^{(c-1,d-1)}=&\sum_{t_1\leqs c-1,t_2\leqs d-1}v^{-(c-d-t_1+t_2)^2}\ol{[\![\begin{smallmatrix} a+c+d-t_1-t_2\\ c-1-t_1\end{smallmatrix}]\!]}\ol{[\![\begin{smallmatrix} a+c+d-t_1-t_2\\ d-1-t_2\end{smallmatrix}]\!]}\wit{u}_{(t_1,t_2)}\\
 =&\sum_{\substack{t_1\leqs c,t_2\leqs d\\c-t_1=d-t_2}}\ol{[\![\begin{smallmatrix} a+c+d-t_1-t_2\\ c-1-t_1\end{smallmatrix}]\!]}^2\wit{u}_{(t_1,t_2)}+\sum_{\substack{t_1\leqs c,t_2\leqs d\\c-t_1\neq d-t_2}}v^{-(c-d+t_1-t_2)^2}\ol{[\![\begin{smallmatrix} a+c+d-t_1-t_2\\ c-1-t_1\end{smallmatrix}]\!]}\ol{[\![\begin{smallmatrix} a+c+d-t_1-t_2\\ d-1-t_2\end{smallmatrix}]\!]}\wit{u}_{(t_1,t_2)}.
\end{split}
\end{equation*}
and
\begin{equation*}
 m^{(c-1,d-1)}=\begin{cases}\wit{u}_{(c-1,d-1)}+\wit{u}_{(c-2,d-2)}+\cdots+\wit{u}_{(c-d,0)}+X',&\text{ if }c\geq d;\\
 \wit{u}_{(c-1,d-1)}+\wit{u}_{(c-2,d-2)}+\cdots+\wit{u}_{(0,d-c)}+Y',&\text{ if }c<d,
 \end{cases}
\end{equation*}
where $X',Y'\in\sum_{(t_1,t_2)<(c,d)} v^{-1}\bbz[v^{-1}]\wit{u}_{(t_1,t_2)}.$
Hence,
\begin{equation*}
  m^{(c,d)}-m^{(c-1,d-1)}=\wit{u}_{(c,d)}+Z, \text{ where }Z\in\sum_{(t_1,t_2)<(c,d)}v^{-1}\bbz[v^{-1}]\wit{u}_{(t_1,t_2)}.
\end{equation*}
This proves that $\scc_A= m^{(c,d)}-m^{(c-1,d-1)}$ is the canonical basis element associated to $A$ in this case.

Next we prove (2). Fix $a>b$ and let
$$\aligned
M(c,d)&=\sum_{k_1=0}^c(-1)^{c-k_1}\begin{bmatrix}
  a-b-1+c-k_1\\a-b-1
\end{bmatrix}m^{(k_1,d)}-\sum_{l_1=0}^{c-1}(-1)^{c-1-l_1}\begin{bmatrix}
  a-b-2+c-l_1\\a-b-1
\end{bmatrix}m^{(l_1,d-1)}\\
&= \wit{u}_{(c,d)}+\sum_{t_1=0}^{c-1}f^{(c,d)}_{(t_1,d)}\wit{u}_{(t_1,d)}+\sum_{t_2=0}^{d-1}f^{(c,d)}_{(c,t_2)}\wit{u}_{(c,t_2)}+\sum_{(t_1,t_2)\ll(c,d)}(f^{(c,d)}_{(t_1,t_2)}-f^{(c-1,d-1)}_{(t_1,t_2)})\wit{u}_{(t_1,t_2)},
\endaligned$$
where $(t_1,t_2)\ll(c,d)$ means $t_1<c$ and $t_2<d$, and
$$\aligned
\sum_{k_1=0}^c(-1)^{c-k_1}\begin{bmatrix}
  a-b-1+c-k_1\\a-b-1\end{bmatrix}m^{(k_1,d)}&=\sum_{(t_1,t_2)\leqs(c,d)}f^{(c,d)}_{(t_1,t_2)}\wit{u}_{(t_1,t_2)},\text{ and}\\
  \sum_{l_1=0}^{c-1}(-1)^{c-1-l_1}\begin{bmatrix}
  a-b-2+c-l_1\\a-b-1
\end{bmatrix}m^{(l_1,d-1)}&=\sum_{(t_1,t_2)\ll(c,d)}f^{(c-1,d-1)}_{(t_1,t_2)}\wit{u}_{(t_1,t_2)}.\endaligned
 $$
Expending the left hand sides by Lemma~\ref{mk1k2} yields, for $(t_1,t_2)\leqs(c,d)$,
\begin{equation*}
\begin{split}
 &f^{(c,d)}_{(t_1,t_2)}=\sum_{k_1=t_1}^c(-1)^{c-k_1}v^{(a-b-k_1+d+t_1-t_2)(k_1-d-t_1+t_2)}
    [\begin{smallmatrix}a-b-1+c-k_1\\a-b-1\end{smallmatrix}]\ol{[\![\begin{smallmatrix} a+c+d-t_1-t_2\\ k_1-t_1\end{smallmatrix}]\!]}
    \ol{[\![\begin{smallmatrix} b+c+d-t_1-t_2\\ d-t_2\end{smallmatrix}]\!]},\\
 &f^{(c-1,d-1)}_{(t_1,t_2)}=\sum_{l_1=t_1}^{c-1}(-1)^{c-1-l_1}v^{(a-b-l_1+d-1+t_1-t_2)(l_1-d+1-t_1+t_2)}[\begin{smallmatrix}a-b-2+c-l_1\\a-b-1\end{smallmatrix}]
    \ol{[\![\begin{smallmatrix} a+c+d-t_1-t_2\\ l_1-t_1\end{smallmatrix}]\!]}\ol{[\![\begin{smallmatrix} b+c+d-t_1-t_2\\ d-1-t_2\end{smallmatrix}]\!]}.
\end{split}
\end{equation*}
In particular, since $a>b$,
$$f^{(c,d)}_{(c,t_2)}=v^{(a-b+d-t_2)(-d+t_2)}\ol{\left[\!\!\left[\begin{matrix} b+d-t_2\\ d-t_2\end{matrix}\right]\!\!\right]}=v^{-t_2'(a-b+t_2')}\ol{\left[\!\!\left[\begin{matrix} b+t_2'\\ t_2'\end{matrix}\right]\!\!\right]}\in v^{-1}\bbz[v^{-1}]\;\;(t_2'=d-t_2\geq0).$$
and, by Lemma~\ref{technic-lem-of-gauss-poly}(1), we have as seen at the end of the proof of Theorem \ref{theorem for case (2,1)},
$$\aligned
f^{(c,d)}_{(t_1,d)}&=\sum_{k_1=t_1}^c(-1)^{c-k_1}v^{(a-b-k_1+t_1)(k_1-t_1)}\begin{bmatrix}a-b-1+c-k_1\\a-b-1\end{bmatrix}
       \ol{\left[\!\!\left[\begin{matrix} a+c-t_1\\ k_1-t_1\end{matrix}\right]\!\!\right]}\\
&=v^{-t_1'(a+t_1')}\begin{bmatrix}b+t_1'\\t_1'\end{bmatrix}=v^{-t_1'(a-b+t_1')}\ol{\left[\!\!\left[\begin{matrix}b+t_1'\\t_1'\end{matrix}\right]\!\!\right]}\in v^{-1}\bbz[v^{-1}]\;\; (t_1'=c-t_1).
\endaligned$$

Assume now $(t_1,t_2)\ll(c,d)$ and let
$$g^{(c,d)}_{(t_1,t_2)}:=f^{(c,d)}_{(t_1,t_2)}-f^{(c-1,d-1)}_{(t_1,t_2)}.$$
If $(t_1,t_2)=(0,0)$, then
$g_{(0,0)}^{(c,d)} \in v^{-1}\bbz[v^{-1}]$. This is done in Lemma \ref{coeff of (0,0) for a>b} of the Appendix. 

%\begin{equation*}\begin{split}
%g_{(0,0)}^{(c,d)}  &=\sum_{k_1=0}^{c+1}(-1)^{c+1-k_1}v^{(a-b-k_1+d+1)(k_1-d-1)}[\begin{smallmatrix}
%  a-b+c-k_1\\a-b-1
%\end{smallmatrix}]\ol{[\![\begin{smallmatrix} a+c+d+2\\ k_1\end{smallmatrix}]\!]}\ol{[\![\begin{smallmatrix} b+c+d+2\\ d+1\end{smallmatrix}]\!]}\\
%&-\sum_{l_1=0}^{c}(-1)^{c-l_1}v^{(a-b-l_1+d)(l_1-d)}[\begin{smallmatrix}
%  a-b-1+c-l_1\\a-b-1
%\end{smallmatrix}]\ol{[\![\begin{smallmatrix} a+c+d+2\\ l_1\end{smallmatrix}]\!]}\ol{[\![\begin{smallmatrix} b+c+d+2\\d\end{smallmatrix}]\!]}
%\end{split}\end{equation*}

It remains to prove that $g^{(c,d)}_{(t_1,t_2)}\in v^{-1}\bbz[v^{-1}]$ for all $(0,0)<(t_1,t_2)\ll(c,d)$. This follows from the following recursive formula: for all $(0,0)<(t_1,t_2)\leq (c',d')\ll(c,d)$,
$$
g^{(c'+1,d'+1)}_{(t_1,t_2)} =\begin{cases} g^{(c'+1,d')}_{(t_1,t_2-1)} ,&\quad\text{ if }t_2\geq1;\\
g^{(c',d'+1)}_{(t_1-1,0)} ,&\quad\text{ if }t_2=0,\end{cases}$$
which can be seen as follows. 

First, the coefficient $g^{(c'+1,d'+1)}_{(t_1,t_2)}$ of $\wit{u}_{(t_1,t_2)}$ in $M(c'+1,d'+1)$ has the form
\begin{equation*}
\begin{split}
&\sum_{k_1=t_1}^{c'+1}(-1)^{c'+1-k_1}v^{(a-b-k_1+d'+1+t_1-t_2)(k_1-d'-1-t_1+t_2)}[\begin{smallmatrix}
  a-b+c'-k_1\\a-b-1
\end{smallmatrix}]\ol{[\![\begin{smallmatrix} a+c'+d'+2-t_1-t_2\\ k_1-t_1\end{smallmatrix}]\!]}\ol{[\![\begin{smallmatrix} b+c'+d'+2-t_1-t_2\\ d'+1-t_2\end{smallmatrix}]\!]}\\
&\quad-\sum_{l_1=t_1}^{c'}(-1)^{c'-l_1}v^{(a-b-l_1+d'+t_1-t_2)(l_1-d'-t_1+t_2)}[\begin{smallmatrix}
  a-b-1+c'-l_1\\a-b-1
\end{smallmatrix}]\ol{[\![\begin{smallmatrix} a+c'+d'+2-t_1-t_2\\ l_1-t_1\end{smallmatrix}]\!]}\ol{[\![\begin{smallmatrix} b+c'+d'+2-t_1-t_2\\ d'-t_2\end{smallmatrix}]\!]}.
\end{split}
\end{equation*}

If $t_2\geqs 1$, then the coefficient $g^{(c'+1,d')}_{(t_1,t_2-1)}$ of $\wit{u}_{(t_1,t_2-1)}$ in $M(c'+1,d')$ has the form
\begin{equation*}
\begin{split}
  &\sum_{k_1=t_1}^{c'+1}(-1)^{c'+1-k_1}v^{(a-b-k_1+d'+t_1-t_2+1)(k_1-d'-t_1+t_2-1)}[\begin{smallmatrix}
  a-b+c'-k_1\\a-b-1
\end{smallmatrix}]\ol{[\![\begin{smallmatrix} a+c'+1+d'-t_1-t_2+1\\ k_1-t_1\end{smallmatrix}]\!]}\ol{[\![\begin{smallmatrix} b+c'+1+d'-t_1-t_2+1\\ d'-t_2+1\end{smallmatrix}]\!]}\\
&-\!\!\sum_{l_1=t_1}^{c'}(-1)^{c'-l_1}\!v^{(a-b-l_1+d'-1+t_1-t_2+1)(l_1-d'+1-t_1+t_2-1)}[\begin{smallmatrix}
  a-b-1+c'-l_1\\a-b-1
\end{smallmatrix}]\ol{[\![\begin{smallmatrix} a+c'+1+d'-t_1-t_2+1\\ l_1-t_1\end{smallmatrix}]\!]}\ol{[\![\begin{smallmatrix} b+c'+1+d'-t_1-t_2+1\\ d'-1-t_2+1\end{smallmatrix}]\!]},
\end{split}
\end{equation*}
which is the same as that of $\wit{u}_{(t_1,t_2)}$ in $M(c'+1,d'+1)$, proving the first recursive formula.

If $t_2=0,t_1\geqs1$, the coefficient $g^{(c',d'+1)}_{(t_1-1,0)}$ of $\wit{u}_{(t_1-1,0)}$ in $M(c',d'+1)$ has the form
\begin{equation*}
  \begin{split}
  &\sum_{k_1=t_1-1}^{c'}(-1)^{c'-k_1}v^{(a-b-k_1+d'+1+t_1-1)(k_1-d'-1-t_1+1)}[\begin{smallmatrix}
  a-b-1+c'-k_1\\a-b-1
\end{smallmatrix}]\ol{[\![\begin{smallmatrix} a+c'+d'+1-t_1+1\\ k_1-t_1+1\end{smallmatrix}]\!]}\ol{[\![\begin{smallmatrix} b+c'+d'+1-t_1+1\\ d'+1\end{smallmatrix}]\!]}\\
&\quad\;-\sum_{l_1=t_1-1}^{c'-1}(-1)^{c'-1-l_1}v^{(a-b-l_1+d'+t_1-1)(l_1-d'-t_1+1)}[\begin{smallmatrix}
  a-b-2+c'-l_1\\a-b-1
\end{smallmatrix}]\ol{[\![\begin{smallmatrix} a+c'+d'+1-t_1+1\\ l_1-t_1+1\end{smallmatrix}]\!]}\ol{[\![\begin{smallmatrix} b+c'+d'+1-t_1+1\\ d\end{smallmatrix}]\!]}.
  \end{split}
\end{equation*}
Putting $k_1'=k_1+1,l_1'=l_1+1$, we obtain
\begin{equation*}
  \begin{split}
    g^{(c',d'+1)}_{(t_1-1,0)}&=\sum_{k'_1=t_1}^{c'+1}(-1)^{c'-k'_1+1}v^{(a-b-k'_1+d'+1+t_1)(k'_1-d'-1-t_1)}[\begin{smallmatrix}
  a-b+c'-k'_1\\a-b-1
\end{smallmatrix}]\ol{[\![\begin{smallmatrix} a+c'+d'+2-t_1\\ k'_1-t_1\end{smallmatrix}]\!]}\ol{[\![\begin{smallmatrix} b+c'+d'+2-t_1\\ d'+1\end{smallmatrix}]\!]}\\
&-\sum_{l'_1=t_1}^{c'}(-1)^{c'-l_1'}v^{(a-b-l'_1+d'+t_1)(l'_1-d'-t_1)}[\begin{smallmatrix}
  a-b-1+c-l'_1\\a-b-1
\end{smallmatrix}]\ol{[\![\begin{smallmatrix} a+c'+d'+2-t_1\\ l'_1-t_1\end{smallmatrix}]\!]}\ol{[\![\begin{smallmatrix} b+c'+d'+2-t_1\\ d'\end{smallmatrix}]\!]},
  \end{split}
\end{equation*}
which is the same as that of $\wit{u}_{(t_1,0)}$ in $M(c'+1,d'+1)$, proving the second recursive formula.

Repeatedly applying the recursive formula yields, for all $(0,0)<(t_1,t_2)\ll(c,d)$,
$$g^{(c,d)}_{(t_1,t_2)}=g_{(0,0)}^{(c-t_1,d-t_2)}.$$
By Lemma \ref{coeff of (0,0) for a>b} again, $g_{(t_1,t_2)}^{(c,d)}\in v^{-1}\bbz[v^{-1}]$.
This completes the proof of (2).

The proof of (3) can also be reduced by induction to prove that the coefficient of $\wit{u}_{(0,0)}$ belongs to $v^{-1}\bbz[v^{-1}]$, which is given in Lemma \ref{coeff of (0,0) for a>b} of the Appendix.
\end{proof}

\begin{appendix}
\section{The coefficient of $\wit{u}_{(0,0)}$} 
To complete the proof of Theorem \ref{slice22}, we need the following result. We first rewrite the identity in Lemma~\ref{technic-lem-of-gauss-poly}(2) as
  \begin{equation}\label{identity2}
    \sum_{i=0}^\dz(-1)^iv^{i(2\delta-2n-i-1)+2\delta(n+k)}\!\!\ol{\left[\!\!\left[\begin{matrix}
    k-1+i\\k-1
  \end{matrix}\right]\!\!\right]}\ol{\!\!\left[\!\!\left[\begin{matrix}
    m+n\\ \dz-i
  \end{matrix}\right[\!\!\right]}=\sum_{t=0}^{\text{\rm min}\{\dz,n\}}v^{2t(\delta+n+k-t)}\!\!\ol{\left[\!\!\left[\begin{matrix}
    m-k\\\dz-t
  \end{matrix}\right]\!\!\right]}\!\!\ol{\left[\!\!\left[\begin{matrix}
    n\\t
  \end{matrix}\right]\!\!\right]}.
  \end{equation}
for all $m\geqs k\geqs 0,\dz,n\in\bbn$.
\begin{lem}\label{coeff of (0,0) for a>b}
For the numbers $a,b,c,d\in\bbn$ with $c,d\geqs 1$ as given in Theorem \ref{slice22}, we have
$$g^{(c,d)}_{(0,0)}\in v^{-1}\bbz[v^{-1}],$$
where, for $a>b$, 
\begin{equation*}
\begin{split}
g^{(c,d)}_{(0,0)}&= \sum_{k_1=0}^c(-1)^{c-k_1}v^{(a-b-k_1+d)(k_1-d)}\begin{bmatrix}
  a-b-1+c-k_1\\a-b-1
\end{bmatrix}\ol{\left[\!\!\left[\begin{matrix} a+c+d\\ k_1\end{matrix}\right]\!\!\right]}\ol{\left[\!\!\left[\begin{matrix} b+c+d\\ d\end{matrix}\right]\!\!\right]}\\
&\quad\;-\sum_{l_1=0}^{c-1}(-1)^{c-1-l_1}v^{(a-b-l_1+d-1)(l_1-d+1)}\begin{bmatrix}
  a-b-2+c-l_1\\a-b-1
\end{bmatrix}\ol{\left[\!\!\left[\begin{matrix} a+c+d\\ l_1\end{matrix}\right]\!\!\right]}\ol{\left[\!\!\left[\begin{matrix} b+c+d\\ d-1\end{matrix}\right]\!\!\right]};
\end{split}
\end{equation*}
while, for $a<b$, 
\begin{equation*}
\begin{split}
g^{(c,d)}_{(0,0)}&= \sum_{k_1=0}^d(-1)^{d-k_1}v^{(b-a-k_1+c)(k_1-c)}\begin{bmatrix}
  b-a-1+d-k_1\\b-a-1
\end{bmatrix}\ol{\left[\!\!\left[\begin{matrix} a+c+d\\ c\end{matrix}\right]\!\!\right]}\ol{\left[\!\!\left[\begin{matrix} b+c+d\\ k_1\end{matrix}\right]\!\!\right]}\\
&\quad\;-\sum_{l_1=0}^{d-1}(-1)^{d-1-l_1}v^{(b-a-l_1+c-1)(l_1-c+1)}\begin{bmatrix}
  b-a-2+d-l_1\\b-a-1
\end{bmatrix}\ol{\left[\!\!\left[\begin{matrix} a+c+d\\ c-1\end{matrix}\right]\!\!\right]}\ol{\left[\!\!\left[\begin{matrix} b+c+d\\ l_1\end{matrix}\right]\!\!\right]}.
\end{split}
\end{equation*}

\end{lem}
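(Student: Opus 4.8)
The plan is to reduce each of the two alternating sums defining $g^{(c,d)}_{(0,0)}$ to the closed form furnished by the rewritten identity \eqref{identity2}, and then to compare the two results by a top–degree estimate in $v$. I will carry this out for the case $a>b$; the case $a<b$ follows at once from the symmetry $(a,c)\leftrightarrow(b,d)$, which interchanges the two displayed formulas for $g^{(c,d)}_{(0,0)}$.

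First I would convert the ordinary Gaussian factors into symmetric ones by means of $\begin{bmatrix}N\\ t\end{bmatrix}=v^{t(N-t)}\ol{[\![\begin{smallmatrix}N\\ t\end{smallmatrix}]\!]}$, so that every binomial-type coefficient appearing in $g^{(c,d)}_{(0,0)}$ becomes bar invariant and lies in $\bbz[v^{-2}]$ with constant term $1$. Substituting $i=c-k_1$ in the first sum and $i=c-1-l_1$ in the second, an elementary expansion of the exponents shows that in both sums the term of index $i$ carries the power $v^{A_0(c-d)+i(2c-2d-i-1)}$, where $A_0:=a-b+d-c$. Consequently, up to the common factor $v^{A_0(c-d)}$ and the index-free factors $\ol{[\![\begin{smallmatrix}b+c+d\\ d\end{smallmatrix}]\!]}$ and $\ol{[\![\begin{smallmatrix}b+c+d\\ d-1\end{smallmatrix}]\!]}$ respectively, the first sum is exactly the left-hand side of \eqref{identity2} with $k=a-b$, $\delta=c$, $n=d$, $m=a+c$, and the second is the left-hand side of \eqref{identity2} with $k=a-b$, $\delta=c-1$, $n=d-1$, $m=a+c+1$; in both cases $m\geqs k$ since $b+c\geqs0$, so \eqref{identity2} applies.

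Applying \eqref{identity2} then gives $g^{(c,d)}_{(0,0)}=v^{A_0(c-d)}\bigl(S_1-S_2\bigr)$ with
$$S_1=\ol{[\![\begin{smallmatrix}b+c+d\\ d\end{smallmatrix}]\!]}\,v^{-2c(a-b+d)}\sum_{t=0}^{\min\{c,d\}}v^{2t(a-b+c+d-t)}\ol{[\![\begin{smallmatrix}b+c\\ c-t\end{smallmatrix}]\!]}\ol{[\![\begin{smallmatrix}d\\ t\end{smallmatrix}]\!]}$$
and
$$S_2=\ol{[\![\begin{smallmatrix}b+c+d\\ d-1\end{smallmatrix}]\!]}\,v^{-2(c-1)(a-b+d-1)}\sum_{t=0}^{\min\{c-1,d-1\}}v^{2t(a-b+c+d-2-t)}\ol{[\![\begin{smallmatrix}b+c+1\\ c-1-t\end{smallmatrix}]\!]}\ol{[\![\begin{smallmatrix}d-1\\ t\end{smallmatrix}]\!]}.$$
Since each $\ol{[\![\begin{smallmatrix}N\\ s\end{smallmatrix}]\!]}$ lies in $\bbz[v^{-2}]$ with constant term $1$, and $t\mapsto 2t(\alpha-t)$ is strictly concave and, on the relevant ranges, increasing, only the largest admissible index $t$ can contribute a term of $v$-degree $\geqs0$ to $S_1$ (namely $t=\min\{c,d\}$) and to $S_2$ (namely $t=\min\{c-1,d-1\}$). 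A short computation of top degrees then finishes the argument in two cases. If $c>d$, that leading term of $S_1$ has top degree $2(a-b)(d-c)<0$ and likewise for $S_2$; combined with $A_0(c-d)+2(a-b)(d-c)=-(c-d)\bigl((a-b)+(c-d)\bigr)<0$, this gives $g^{(c,d)}_{(0,0)}\in v^{-1}\bbz[v^{-1}]$. If $c\leqs d$, then $S_1$ and $S_2$ are each congruent to $1$ modulo $v^{-1}\bbz[v^{-1}]$ (the leading terms occurring at $t=c$ and $t=c-1$ respectively, with coefficient $1$), so they cancel and $S_1-S_2\in v^{-2}\bbz[v^{-2}]$; since $A_0(c-d)\leqs0$ in this range we again get $g^{(c,d)}_{(0,0)}\in v^{-1}\bbz[v^{-1}]$.

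The hard part will be the precise bookkeeping of the powers of $v$ in the reduction step — aligning the exponents of the two alternating sums with the left-hand side of \eqref{identity2} and tracking the constant offsets cleanly through the identity — and, in the final step, the observation that neither $S_1$ nor $S_2$ alone lies in $v^{-1}\bbz[v^{-1}]$ when $c\leqs d$: it is only the difference $S_1-S_2$, together with the sign of $A_0(c-d)$, that makes the claim work, so the case distinction $c\leqs d$ versus $c>d$ is unavoidable.
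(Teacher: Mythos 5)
Your proof is correct, and it overlaps substantially with the paper's: both rest on the identity \eqref{identity2}, applied with $(k,\delta,n,m)=(a-b,\,c,\,d,\,a+c)$ to the first sum and $(a-b,\,c-1,\,d-1,\,a+c+1)$ to the second, and your exponent bookkeeping (the common factor $v^{A_0(c-d)}$ with $A_0=a-b+d-c$, and the term exponent $i(2c-2d-i-1)$ after the substitutions $i=c-k_1$ and $i=c-1-l_1$) agrees with what the paper computes. The organization differs in one genuine respect: the paper invokes \eqref{identity2} only when $c>d$, and for $c\leqs d$ it works directly on the original alternating sums, pairing the $k_1$-term of the first with the $(k_1-1)$-term of the second and using that a difference of two products of barred Gaussians lies in $v^{-1}\bbz[v^{-1}]$, together with non-positivity of the exponents. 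You instead apply the identity uniformly and finish with top-degree estimates; this requires the extra observation, which you make and which is correct, that when $c\leqs d$ each of $S_1$ and $S_2$ equals $1$ plus terms in $v^{-2}\bbz[v^{-2}]$ (the leading $1$ coming from $t=\min\{c,d\}$, resp.\ $t=\min\{c-1,d-1\}$), so that only the difference $S_1-S_2$ lands in $v^{-1}\bbz[v^{-1}]$; whereas for $c>d$ each of $S_1,S_2$ already has top degree $2(a-b)(d-c)<0$ and the prefactor estimate $A_0(c-d)+2(a-b)(d-c)=-(c-d)(a-b+c-d)<0$ closes the case without any cancellation. Your version buys a uniform treatment of both cases; the paper's split avoids the identity in the easy case at the price of a separate elementary pairing argument there. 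Finally, your reduction of the case $a<b$ to $a>b$ via $(a,b,c,d)\mapsto(b,a,d,c)$ does carry one displayed formula exactly onto the other, so it is a legitimate substitute for the paper's ``the other case can be proved similarly''.
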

\begin{proof}
We only prove the $a>b$ case, the other case can be proved similarly. Rewrite $g_{(0,0)}^{(c,d)}$ as
\begin{equation*}
\begin{split}
 g_{(0,0)}^{(c,d)}=&\sum_{k_1=0}^c(-1)^{c-k_1}v^{(a-b-k_1+d)(k_1-d)+(c-k_1)(a-b-1)}\ol{[\![\begin{smallmatrix}
  a-b-1+c-k_1\\a-b-1
\end{smallmatrix}]\!]}\ol{[\![\begin{smallmatrix} a+c+d\\ k_1\end{smallmatrix}]\!]}\cdot\ol{[\![\begin{smallmatrix} b+c+d\\ d\end{smallmatrix}]\!]}\\
&-\sum_{l_1=0}^{c-1}(-1)^{c-1-l_1}v^{(a-b-l_1+d-1)(l_1-d+1)+(a-b-1)(c-1-l_1)}\ol{[\![\begin{smallmatrix}
  a-b-2+c-l_1\\a-b-1
\end{smallmatrix}]\!]}\ol{[\![\begin{smallmatrix}a+c+d\\ l_1\end{smallmatrix}]\!]}\cdot\ol{[\![\begin{smallmatrix}b+c+d\\ d-1\end{smallmatrix}]\!]}.\\
\end{split}
\end{equation*}

If $c\leqs d$, then rearranging gives
\begin{equation*}
\begin{split}
 g^{(c,d)}_{(0,0)}&=(-1)^{c}v^{-d(a-b+d)+c(a-b-1)}\ol{[\![\begin{smallmatrix}
  a-b-1+c\\a-b-1
\end{smallmatrix}]\!]}\ol{[\![\begin{smallmatrix} b+c+d\\ d\end{smallmatrix}]\!]}\\
+&\sum_{k_1=1}^c(-1)^{c-k_1}v^{(a-b-k_1+d)(k_1-d)+(c-k_1)(a-b-1)}\ol{[\![\begin{smallmatrix}
  a-b-1+c-k_1\\a-b-1
\end{smallmatrix}]\!]}\big(\ol{[\![\begin{smallmatrix} a+c+d\\ k_1\end{smallmatrix}]\!]}\ol{[\![\begin{smallmatrix} b+c+d\\ d\end{smallmatrix}]\!]}-\ol{[\![\begin{smallmatrix} a+c+d\\ k_1-1\end{smallmatrix}]\!]}\ol{[\![\begin{smallmatrix} b+c+d\\ d-1\end{smallmatrix}]\!]}\big).\\
\end{split}
\end{equation*}
Since $a>b$ and $c\leqs d$,  $-d(a-b+d)+c(a-b-1)=(a-b-1)(c-d)-d(1+d)<0$ and so the first term is in $v^{-1}\bbz[v^{-1}]$. Since the difference of the product of Gaussian polynomials is in $v^{-1}\bbz[v^{-1}]$, and 
$(a-b-k_1+d)(k_1-d)+(c-k_1)(a-b-1)=(a-b-1)(c-d)+(1+d-k_1)(k_1-d)\leqs 0$, this proves $g^{(c,d)}_{(0,0)}\in v^{-1}\bbz[v^{-1}]$ in this case.

We now assume $c>d$. By rearranging the exponents of $v$, $g^{(c,d)}_{(0,0)}$ has the form
\begin{equation*}
\begin{split}
 g_{(0,0)}^{(c,d)}
% =&\sum_{k_1=0}^c(-1)^{c-k_1}v^{(a-b-k_1+d)(k_1-d)+(c-k_1)(a-b-1)}\ol{[\![\begin{smallmatrix}
%  a-b-1+c-k_1\\a-b-1
%\end{smallmatrix}]\!]}\ol{[\![\begin{smallmatrix} a+c+d\\ k_1\end{smallmatrix}]\!]}\ol{[\![\begin{smallmatrix} b+c+d\\ d\end{smallmatrix}]\!]}\\
%&-\sum_{l_1=0}^{c-1}(-1)^{c-1-l_1}v^{(a-b-l_1+d-1)(l_1-d+1)+(a-b-1)(c-1-l_1)}\ol{[\![\begin{smallmatrix}
%  a-b-2+c-l_1\\a-b-1
%\end{smallmatrix}]\!]}\ol{[\![\begin{smallmatrix}a+c+d\\ l_1\end{smallmatrix}]\!]}\ol{[\![\begin{smallmatrix}b+c+d\\ d-1\end{smallmatrix}]\!]}\\
=&v^{-(a-b)(c+d)-c^2-d^2}\ol{[\![\begin{smallmatrix} b+c+d\\ d\end{smallmatrix}]\!]}\cdot S_1-v^{2(a-b+c+d-1)-(a-b)(c+d)-c^2-d^2}\ol{[\![\begin{smallmatrix}b+c+d\\ d-1\end{smallmatrix}]\!]}\cdot S_2\\
\end{split}
\end{equation*}

where
\begin{equation*}
  \begin{split}
    S_1&=\sum_{k_1=0}^c(-1)^{c-k_1}v^{(c-k_1)(c+k_1-2d-1)+2c(a-b+d)}\ol{[\![\begin{smallmatrix}
  a-b-1+c-k_1\\a-b-1
\end{smallmatrix}]\!]}\ol{[\![\begin{smallmatrix} a+c+d\\ k_1\end{smallmatrix}]\!]}\\
    S_2&=\sum_{l_1=0}^{c-1}(-1)^{c-1-l_1}v^{(c-1-l_1)(c+l_1-2d)+2(c-1)(a-b+d-1)}\ol{[\![\begin{smallmatrix}
  a-b-2+c-l_1\\a-b-1
\end{smallmatrix}]\!]}\ol{[\![\begin{smallmatrix} a+c+d\\ l_1\end{smallmatrix}]\!]}
  \end{split}
\end{equation*}

Applying \eqref{identity2} (i.e., Lemma \ref{technic-lem-of-gauss-poly}(2)) to $S_1$ with $k=a-b,m=a+c,n=d,i=c-k_1,\dz=c$ and to $S_2$
with $k=a-b,m=a+c+1,n=d-1,i=c-1-l_1,\dz=c-1$ yields

\begin{equation*}
  \begin{split}
    S_1&=\sum_{t=0}^dv^{2t(a+c+d-b-t)}\ol{[\![\begin{smallmatrix}
  b+c\\c-t
\end{smallmatrix}]\!]}\ol{[\![\begin{smallmatrix} d\\ t\end{smallmatrix}]\!]},\quad
    S_2=\sum_{t=0}^{d-1}v^{2t(a+c+d-b-2-t)}\ol{[\![\begin{smallmatrix}
  b+c+1\\c-1-t
\end{smallmatrix}]\!]}\ol{[\![\begin{smallmatrix} d-1\\t\end{smallmatrix}]\!]}
  \end{split}
\end{equation*}
Thus,

\begin{equation*}
\begin{split}
 g_{(0,0)}^{(c,d)}
&=v^{-(a-b)(c+d)-c^2-d^2}\ol{[\![\begin{smallmatrix} b+c+d\\ d\end{smallmatrix}]\!]}\ol{[\![\begin{smallmatrix}
  b+c\\c
\end{smallmatrix}]\!]}\\
&\quad+v^{-(a-b)(c+d)-c^2-d^2}\ol{[\![\begin{smallmatrix} b+c+d\\ d\end{smallmatrix}]\!]}\big(\sum_{t=1}^dv^{2t(a+c+d-b-t)}\ol{[\![\begin{smallmatrix}
  b+c\\c-t
\end{smallmatrix}]\!]}\ol{[\![\begin{smallmatrix} d\\ t\end{smallmatrix}]\!]}\big)\\
&\quad-v^{2(a-b+c+d-1)-(a-b)(c+d)-c^2-d^2}\ol{[\![\begin{smallmatrix}b+c+d\\ d-1\end{smallmatrix}]\!]}\big(\sum_{t=0}^{d-1}v^{2t(a+c+d-b-2-t)}\ol{[\![\begin{smallmatrix}
  b+c+1\\c-1-t
\end{smallmatrix}]\!]}\ol{[\![\begin{smallmatrix} d-1\\t\end{smallmatrix}]\!]}\big)\\
\end{split}
\end{equation*}
Changing the running index $t\in\{0,1,\ldots,d-1\}$ to $t'=t+1\in\{1,2,\ldots,d\}$ in the last sum gives

\begin{equation*}
  \begin{split}
    g_{(0,0)}^{(c,d)}=&v^{-(a-b)(c+d)-c^2-d^2}\ol{[\![\begin{smallmatrix} b+c+d\\ d\end{smallmatrix}]\!]}\ol{[\![\begin{smallmatrix} b+c\\ c\end{smallmatrix}]\!]}\\
    &+\sum_{t=1}^dv^{-(a-b)(c+d)-c^2-d^2+2t(a+c+d-b-t)}\big(\ol{[\![\begin{smallmatrix} b+c+d\\ d\end{smallmatrix}]\!]}\ol{[\![\begin{smallmatrix} b+c\\ c-t\end{smallmatrix}]\!]}\ol{[\![\begin{smallmatrix} d\\ t\end{smallmatrix}]\!]}-\ol{[\![\begin{smallmatrix} b+c+d\\ d-1\end{smallmatrix}]\!]}\ol{[\![\begin{smallmatrix} b+c+1\\ c-t\end{smallmatrix}]\!]}\ol{[\![\begin{smallmatrix} d-1\\ t-1\end{smallmatrix}]\!]}\big).
  \end{split}
\end{equation*}
The first term is clear in $v^{-1}\bbz[v^{-1}]$ since $a>b$. Now, $c>d$ implies that 

\begin{equation*}
\aligned
&\quad\,  -(a-b)(c+d)-c^2-d^2+2t(a+c+d-b-t) \\
&\leqs  -(a-b)(c+d)-c^2-d^2+2d(a+c-b)\\
  &=-(c-d)(a-b+c-d)<0
\endaligned
\end{equation*}
for any $t=1,2,\cdots,d$. Hence, $g_{(0,0)}^{(c,d)}\in v^{-1}\bbz[v^{-1}]$.
\end{proof}
\end{appendix}

%{\bf Acknowledgements\quad}The second author would like to thank Prof. J. Du for many
%helpful discussion and encouragement when he was visiting the University of New South Wales,
%the hospitality and support of UNSW and the financial support of China Scholarship Council are gratefully acknowledged.

%================================================================================================================================

%\bibliographystyle{E:/kpcloud/setup/ownbib} %ownbib, amsplain, amsalpha, plain, alpha,...
%\bibliography{E:/kpcloud/setup/refn}

\begin{thebibliography}{10}
\bibitem{BeilinsonLusztigMacPherson1990geometric}
A.~A. Beilinson, G.~Lusztig and R.~MacPherson, \emph{A geometric setting for
  the quantum deformation of ${GL}_n$}, Duke Math. J. \textbf{61} (1990),
  no.~2, 655--677.

\bibitem{Bongartz1996degenerations}
K.~Bongartz, \emph{On degenerations and extensions of finite dimensional
  modules}, Adv. Math. \textbf{121} (1996), 245--287.

\bibitem{DengDu2005monomial}
B.~Deng and J.~Du, \emph{Monomial bases for quantum affine $\mathfrak{sl}_n$},
  Adv. Math. \textbf{191} (2005), 276--304.

\bibitem{DengDuFu2012double}
B.~Deng, J.~Du and Q.~Fu, \emph{{A double Hall algebra approach to affine
  quantum Schur-Weyl theory}}, no. 401, Cambridge University Press, 2012.

\bibitem{DengDuParashallWang2008finite}
B.~Deng, J.~Du, B.~Parashall and J.~Wang, \emph{Finite dimensional algebras and
  quantum groups}, Mathematical Surveys and Monographs, vol. 150, American
  Mathematical Society, Providence, RI, 2008.

\bibitem{DengDuXiao2007generic}
B.~Deng, J.~Du and J.~Xiao, \emph{Generic extensions and canonical bases for
  cyclic quivers}, Canad. J. Math. \textbf{59} (2007), no.~6, 1260--1283.

\bibitem{Drinfeld1988new}
V.~Drinfeld, \emph{A new realization of {Y}angians and quantized affine
  algebras}, Sov. Math. Dokl. \textbf{36} (1988), no.~2, 212--216.

\bibitem{Du1994ic}
J.~Du, \emph{{IC} bases and quantum linear groups}, Proc. Sympos. Pure Math.
  \textbf{56} (1994), 135--148.

\bibitem{DuFu2010modified}
J.~Du and Q.~Fu, \emph{A modified {BLM} approach to quantum affine
  $\fkg\fkl_n$}, Math. Z. \textbf{266} (2010), no.~4, 747--781.

\bibitem{DuFu2015quantum}
J.~Du and Q.~Fu, \emph{Quantum affine ${\fkg\fkl}_n$ via {H}ecke algebras},
  Adv. Math. \textbf{282} (2015), 23--46.

\bibitem{DF16}
J.~Du and Q.~Fu, \emph{The Integral Quantum loop algebra of $\mathfrak{gl}_n$}, preprint.

\bibitem{Guo1995hallpoly}
J.~Y. Guo, \emph{The {H}all polynomials of a cyclic serial algebra}, Comm.
  Algebra. \textbf{23} (1995), 743--751.

\bibitem{FLLLW} Z. Fan, C. Lai, Y. Li, L. Luo, W. Wang. {\em Affine flag varieties and quantum symmetric pairs}, arXiv:1602.04383 (108pp)

\bibitem{FL}Z. Fan and Y. Li, {\it Positivity of canonical bases under comultiplication}, arXiv:1511.02434v3.

\bibitem{GV}
V. Ginzburg and E. Vasserot, {\em Langlands reciprocity for affine quantum groups of type $A_n$}, Internat. Math. Res. Notices 1993, 67--85.

\bibitem{Hubery2005symmetric}
A.~Hubery, \emph{Symmetric functions and the center of the {R}ingel--{H}all
  algebra of a cyclic quiver}, Math. Z. \textbf{251} (2005), no.~3, 705--719.

\bibitem{Jantzen1995lectures}
J.~C. Jantzen, \emph{Lectures on quantum groups}, Graduate Studies in
  Mathematics, vol.~6, American Mathematical Society, Providence, RI, 1995.

%\bibitem{LL}J. Lai and L. Luo, {\it }.

\bibitem{Lusztig1990canonical}
G.~Lusztig, \emph{Canonical bases arising from quantized enveloping algebras},
  J. Amer. Math. Soc. \textbf{3} (1990), no.~2, 447--498.

\bibitem{Lusztig1992affine}
G.~Lusztig, \emph{Affine quivers and canonical bases}, Inst. Hautes \'etudes
  Sci. Publ. Math. (1992), no.~76, 111--163.

\bibitem{Lusztig1993introduction}
G.~Lusztig, \emph{Introduction to quantum groups}, Progress in Math., vol. 110,
  Birkh\"auser, Boston, 1993.

\bibitem{Lu93a} G. Lusztig, \emph{Tight monomials in quantized enveloping algebras}, in: Quantum Deformations of Algebras and Their Representations,
 Israel Math. Conf. Proc., vol. 7, 1993, pp. 117--132.
\bibitem{Lu99}
G. Lusztig, {\em Aperiodicity in quantum affine $\frak{gl}_n$},
Asian J. Math. {\bf 3} (1999),  147--177.

\bibitem{Reineke2001generic}
M.~Reineke, \emph{Generic extensions and multiplicative bases of quantum groups
  at $q=0$}, Represent. Theory. \textbf{5} (2001), 147--163.

\bibitem{RInv}C. M. Ringel, \emph{Hall algebras and quantum groups}, Invent. Math. \textbf{101}(1990), 583--592.

\bibitem{RRev}C. M. Ringel, \emph{Hall algebras revisited}, in Quantum Deformations of Algebras and
Their Representations, A. Joseph, S. Shnider (eds.), Israel Mathematical Conference
Proceedings, no. 7, Bar-Ilan University, Bar-Ilan, 1993, pp. 171--176.

\bibitem{Ringel1993composition}
C.~M. Ringel, \emph{The composition algebra of a cyclic quiver}, Proc. London
  Math. Soc. \textbf{66} (1993), 507--537.

\bibitem{Schiffmann2000hall}
O.~Schiffmann, \emph{The {H}all algebra of a cyclic quiver and canonical bases
  of {F}ock spaces}, Int. Math. Res. Not. \textbf{2000} (2000), no.~8,
  413--440.

  \bibitem{Sc06}
O.~Schiffmann, \emph{Lectures on Hall algebras}, arXiv:math/0611617.

\bibitem{VaragnoloVasserot1999decomposition}
M.~Varagnolo and E.~Vasserot, \emph{On the decomposition matrices of the
  quantized {S}chur algebra}, Duke Math. J. \textbf{100} (1999), 267--297.

\bibitem{XicanonicalA31999}
N.~Xi, \emph{Canonical basis for type {$A_3$}}, Comm. Algebra. \textbf{27}
  (1999), no.~11, 5703--5710.
 
\bibitem{XicanonicalB21999}
N.~Xi, \emph{Canonical basis for type {$B_2$}}, J. Algebra. \textbf{214}
  (1999), no.~1, 8--21.

\bibitem{Xiao1997drinfeld}
J.~Xiao, \emph{Drinfeld double and {R}ingel--{G}reen theory of {H}all
  algebras}, J. Algebra. \textbf{190} (1997), no.~1, 100--144.

\bibitem{Zwara1997degenerations}
G.~Zwara, \emph{Degenerations for modules over representation-finite biserial
  algebras}, J. Algebra. \textbf{198} (1997), no.~2, 563--581.

\end{thebibliography}
\end{document}